\documentclass[12pt]{amsart}
\usepackage[margin=1in]{geometry}
\usepackage{float}
\usepackage{graphicx}
\usepackage{multicol}
\usepackage[table,xcdraw]{xcolor}
\usepackage[colorlinks, citecolor=red, linkcolor=blue]{hyperref}
\usepackage{url}
\usepackage[mathscr]{eucal}
\usepackage{verbatim}
\usepackage{upgreek}
\usepackage{amssymb}
\usepackage{amsmath}
\usepackage{fancyhdr}
\usepackage{url}
\linespread{1.213}

\raggedbottom
\newtheorem{thm}{Theorem}[section]
\newtheorem{prop}[thm]{Proposition}
\newtheorem{lem}[thm]{Lemma}
\newtheorem{cor}[thm]{Corollary}

\theoremstyle{definition}
\newtheorem{question}[thm]{Question}

\newtheorem{definition}[thm]{Definition}
\newtheorem{example}[thm]{Example}

\theoremstyle{remark}

\newtheorem{remark}[thm]{Remark}

\numberwithin{equation}{section}

\begin{document}

\title{On the Uniqueness of Certain Types of Circle Packings on Translation Surfaces}
\author{Nilay Mishra}

\begin{abstract}
Consider a collection of finitely many polygons in $\mathbb C$, such that for each side of each polygon, there exists another side of some polygon in the collection (possibly the same) that is parallel and of equal length. A translation surface is the surface formed by identifying these opposite sides with one another. The $\mathcal{H}(1, 1)$ stratum consists of genus two translation surfaces with two singularities of order one. A circle packing corresponding to a graph $G$ is a configuration of disjoint disks such that each vertex of $G$ corresponds to a circle, two disks are externally tangent if and only if their vertices are connected by an edge in $G$, and $G$ is a triangulation of the surface. It is proven that for certain circle packings on $\mathcal{H}(1, 1)$ translation surfaces, there are only a finite number of ways the packing can vary without changing the contacts graph, if two disks along the slit are fixed in place. These variations can be explicitly characterized using a new concept known as \textit{splitting bigons}. Finally, the uniqueness theorem is generalized to a specific type of translation surfaces with arbitrary genus $g \geq 2$.
\end{abstract}

\maketitle
\markright{ON THE UNIQUENESS OF CERTAIN TYPES OF CIRCLE PACKINGS ON TRANSLATION SURFACES}

\section{Introduction}
Translation surfaces are an interesting facet of mathematics because they interrelate many fields such as topology, differential geometry, complex analysis, and dynamical systems. They are concrete and simple to describe but have many applications in pure math topics such as rational billiards (see \cite{wright2}), geodesic flows (see \cite{massart}), interval exchange transformations (see \cite{yoccoz}), and Teichm\"uller theory (see \cite{fornimatheus}). Important examples of translation surfaces that are an active area of study include square-tiled surfaces (also known as origamis), defined later in \ref{squaredef}, and Veech surfaces. 
\newline
\indent The versatility of translation surfaces comes from the number of different perspectives from which they can be considered. Translation surfaces can be thought of as quotient spaces inheriting some of the topology of $\mathbb C$, as locally Euclidean surfaces with a finite number of singularities, or as Riemann surfaces with an associated differential $1$-form. All of these perspectives will be fruitful in examining them.
\newline
\indent Translation surfaces also have singularities known as cone points, around which the angle is of the form $2\pi \cdot k$, where $k > 1$ is some integer. As such, the surfaces do not have a flat smooth metric, which makes it appealing to investigate them geometrically as well. Indeed, some of the properties of these cone points will make the investigation of circle packings, which are typically considered in the framework of a flat metric, more interesting.
\newline
\indent Circle packings were popularized in the mathematics community through William Thurston's famous \textit{Notes} on the subject (see \cite{thurston}), delivered at a conference in Purdue in 1985. They combine rigidity (tangencies between circles have a particular structure) with flexibility (the radii and locations of circles can vary). They also provide a bridge between the combinatorial nature of triangulations with the geometric nature of surfaces and circles, and have important applications in discrete analytic function theory (see \cite{stephenson}) and conformal uniformization (see \cite{schramm}). Furthermore, according to a conjecture of Thurston that was proven by Rodin and Sullivan in 1987 (see \cite{rodinsullivan}), circle packings can provide a geometric view of the Riemann mapping theorem.
\newline
\indent As per the knowledge of the author, this is the first study of circle packings on translation surfaces with genus $g \geq 2$. Since the metric of such surfaces is not flat and smooth, one needs to generalize the definition of circles, and in the process, circle packings on such surfaces. These new definitions will be presented in \S\ref{sec:back}.
\newline
\indent By providing a framework to unite the two fields of translation surfaces and circle packings, this paper sets up future work at their intersection. Possible approaches for further exploration are given in \S\ref{futdir}. Beyond generalizing the notion of the uniqueness and existence of circle packings to translation surfaces, it will also likely be fruitful to consider corresponding analogs of other circle packing concepts, such as discrete analytic function theory, conformal uniformization, and the Riemann mapping.
\newline
\indent In this paper, we will examine the uniqueness of certain types of circle packings on translation surfaces. In \S\ref{dt}, we will characterize all possible generalized circle configurations on the doubled slit torus and then describe a particular class of circle packings on genus $2$ translation surfaces that will be examined in future sections. In \S\ref{tri}, we will prove several results about a new concept known as \textit{splitting bigons}, which will be used in future sections. We will also generalize the results to surfaces with genus $g \geq 2$. In \S\ref{main}, we will present results about the uniqueness of certain types of circle packings on translation surfaces. In particular, we will show that there are only a finite number of ways that a particular type of circle packings on an $\mathcal{H}(1, 1)$ doubled slit torus can vary if two specific double circles are fixed in place. We will then generalize this uniqueness theorem to genus $g \geq 2$, showing that there are only a finite number of ways for a particular type of circle packings on certain translation surfaces to vary when $2g - 2$ specific double circles are fixed in place.
\section{Background}
\label{sec:back}
Before introducing translation surfaces formally, a few definitions are in order. A \textit{manifold} of real dimension $n$ is a topological space such that every point has an open neighbourhood that maps to an open subset of $\mathbb R^n$ via a homeomorphism known as a \textit{chart}. Given two charts $h_{\alpha}$ and $h_{\beta}$ with overlapping domains $U_{\alpha}$ and $U_{\beta}$, respectively, we define the \textit{transition function} $f_{\alpha, \beta}: h_{\alpha}\left(U_{\alpha} \cap U_{\beta}\right) \rightarrow h_{\beta}\left(U_{\alpha} \cap U_{\beta}\right)$ to be $f_{\alpha, \beta} = h_{\beta} \circ h_{\alpha}$. A \textit{complex manifold} of dimension $n$ has charts $h_{\alpha}: U_{\alpha} \to \mathbb C^n$ such that the transition functions $f_{\alpha, \beta}$ are biholomorphic. A \textit{Riemann surface} is a connected complex manifold of dimension one (or equivalently, real dimension $2$). 
\newline
\indent
Three formal definitions of translation surfaces are now presented from \cite{wright}.
\begin{definition} (First definition of translation surface).
Let $\mathbb C_p$ be the set of all possible finite unions of polygons in $\mathbb C$ such that each side is completely identified with an ``opposite" side that is a translation in the plane of the first. A \textit{translation surface} is an equivalence class of the relation $\sim_t$ on $\mathbb C_p$ such that $\mathfrak{P_1}, \mathfrak{P_2} \in \mathbb C_p$ satisfy $\mathfrak{P_1} \sim_t \mathfrak{P_2}$ if and only if $\mathfrak{P_1}$ can be cut into pieces along straight lines and these pieces can be translated and re-glued to form $\mathfrak{P_2}$. After each cut, the two new boundary segments formed must be identified, and two resulting polygons can be glued together along a pair of edges if and only if these edges are identified.
\end{definition}
\begin{example}
The simplest translation surface is the torus $\mathbb C/\mathbb Z[i]$. 
\end{example}

    \begin{figure}[H]
    \centering {\includegraphics[width=10cm]{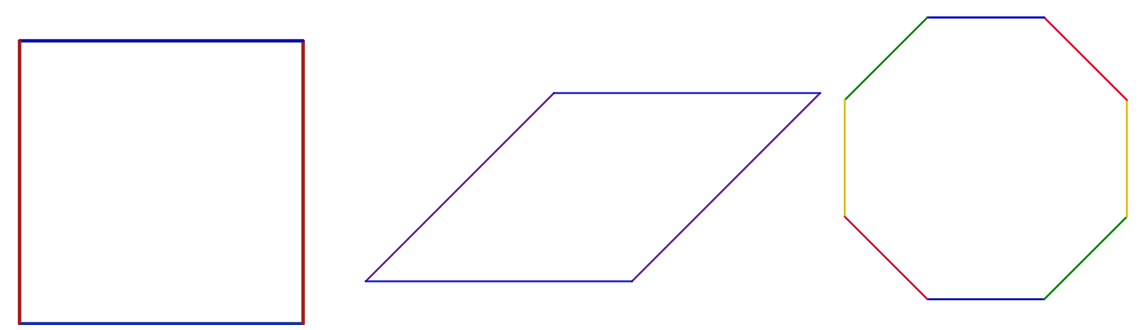}}
    \caption{In each of the three diagrams above, opposite and identified edges have been given the same color. The first two surfaces are tori. All three of the above surfaces are translation surfaces.}
    \label{fig:figure1}
\end{figure}


\begin{definition}
    For a metric space $M$ and an equivalence relation $\sim$ on $M$, a \textit{quotient metric} $d$ on $M/{\sim}$ is defined by:
    \[d([x], [y]) = \inf \left( \sum_{i = 1}^{n} d(p_i, q_i)\right)\]
    where the infimum is taken over sequences $(p_1, p_2, \ldots, p_n)$ and $(q_1, q_2, \ldots, q_n)$ such that $[p_1] = x$, $[q_1] = y$, $[q_i] = [p_{i + 1}]$, and $1 \leq i < n$. 
\end{definition}
\begin{definition}\label{Euclid}
    The \textit{Euclidean} metric of a translation surface in $\mathbb C_p/\sim_t$ is the quotient metric derived from the flat Euclidean metric on $\mathbb C_p \subset \mathbb C$.
\end{definition}
\begin{definition}
    A \textit{saddle connection} on a translation surface is a geodesic for the associated Euclidean metric going from one singularity to another, such that no other singularities lie on the segment.
\end{definition}
There is also a second way of looking at such a surface.
\begin{definition} (Second definition of translation surface).
A \textit{translation surface} is a closed topological surface $X$, together with a finite set of points $\Sigma$ (called \textit{singularities} or \textit{cone points}) and an atlas of charts to $\mathbb C$ on $X \setminus \Sigma$ whose transition maps are translations, such that at each point $p \in \Sigma$ there is some $k > 0$ (called the \textit{degree} or \textit{order} of $p$) and a homeomorphism of a neighborhood of $p$ to a neighborhood of the origin in the $2k+2$ half plane construction that is an isometry away from $p$, depicted in Figure \ref{fig:figure2}.
\end{definition}
\begin{figure}[H]
    \centering
    \includegraphics[width=10cm]{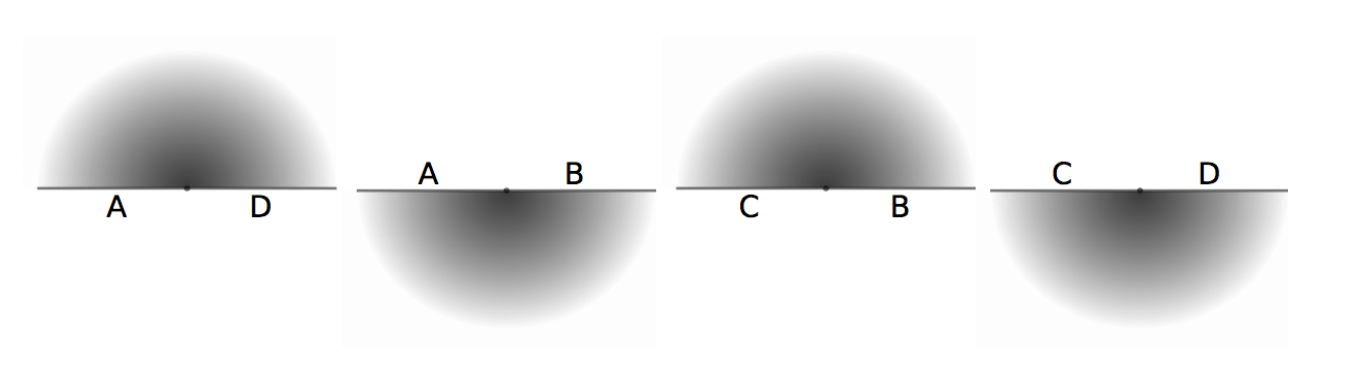}
    \caption{The construction involves taking $k + 1$ copies of the upper half plane and $k + 1$ copies of the lower half plane with the usual flat metric, and gluing them along the infinite rays $[0, \infty)$ and $(-\infty, 0]$ in alternating order. The image above is taken from \cite{wright} and depicts the case for $k = 1$.}
    \label{fig:figure2}
\end{figure}
\begin{remark}
For any translation surface with genus $g > 1$, cone points arise from the fact that no such surface can admit a flat smooth metric (by the Gauss-Bonnet theorem).
\end{remark}
\begin{definition}
The \textit{angle} around a point $p$ in a translation surface is $2\pi$ if $p \in X \setminus \Sigma$ and $2\pi \cdot (k + 1)$ if $p \in \Sigma$ has degree $k$.
\end{definition}
    \begin{example}
In the octagon in Figure \ref{fig:figure1}, there is one singularity of order $2$ and angle $6\pi$.
\end{example}
\begin{prop}
Let the $n$ cone points have degree $d_1, d_2, \ldots, d_n$. Then, we have:
\[\sum_{i = 1}^{n} d_i = 2g - 2\]
where $g$ is the genus of the translation surface.
\end{prop}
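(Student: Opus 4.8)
The plan is to compute the Euler characteristic of $X$ by means of a geodesic triangulation whose vertex set is exactly $\Sigma$, and then to extract the relation by double-counting angles. First I would fix a polygon presentation $\mathfrak{P}$ of the surface (which exists by the first definition of a translation surface) and triangulate each constituent polygon by non-crossing diagonals between its vertices, arranging that every vertex of the resulting cell decomposition is one of the cone points. After performing the edge identifications this yields a triangulation $T$ of $X$ with $V = n$ vertices, some number $E$ of edges (each a saddle connection or a subsegment of one), and $F$ faces, each face being an honest Euclidean triangle since the charts away from $\Sigma$ are translations and hence preserve the flat metric.

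Next I would count angles in two ways. On one hand, each triangular face has interior angle sum $\pi$, so the total angle over all faces is $\pi F$. On the other hand, the triangle corners meeting at a given vertex $p_i$ exactly fill out the cone angle there, which by the definition of the angle around a point equals $2\pi(d_i + 1)$; summing over the $n$ vertices gives total angle $\sum_{i=1}^n 2\pi(d_i+1) = 2\pi\left(n + \sum_{i=1}^n d_i\right)$. Equating the two expressions gives $F = 2n + 2\sum_{i=1}^n d_i$. Since each triangle has three edges and each edge borders two triangles, $2E = 3F$, so $E = 3n + 3\sum_{i=1}^n d_i$. Substituting into $V - E + F = 2 - 2g$, the left-hand side becomes $n - \bigl(3n + 3\sum d_i\bigr) + \bigl(2n + 2\sum d_i\bigr) = -\sum_{i=1}^n d_i$, whence $\sum_{i=1}^n d_i = 2g - 2$, as claimed. (Alternatively, one could invoke the Gauss--Bonnet theorem for flat cone metrics, under which a cone point of degree $d_i$ carries curvature $-2\pi d_i$ and the total curvature equals $2\pi\chi(X)$; the argument above is essentially its combinatorial proof and keeps the exposition self-contained.)

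The main obstacle I anticipate is not the arithmetic but the setup: one must be certain that a triangulation with \emph{all} vertices at cone points genuinely exists and that its faces really are Euclidean triangles. Starting from a polygon presentation this is routine — triangulate each polygon by diagonals joining its vertices — but one should verify that no spurious vertices are introduced by the gluings and that the diagonals descend to embedded arcs on $X$, subdividing further if necessary. One should also dispatch the degenerate case in which a polygon vertex happens to be a regular point of the surface: such a point is harmless, counting as a cone point of degree $0$, and the same double count goes through unchanged.
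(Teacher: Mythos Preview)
Your argument is correct: the angle double-count gives $\pi F = 2\pi\bigl(n + \sum d_i\bigr)$, the edge--face relation $2E=3F$ then determines $E$, and substituting into $V-E+F=2-2g$ yields the formula. Your caveats about arranging the triangulation so that all vertices are cone points (and treating any accidental regular vertices as degree-$0$ cones) are the right things to flag, and they do not obstruct the computation.

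As for comparison with the paper: the paper states this proposition without proof, treating it as a classical fact (it is the combinatorial Gauss--Bonnet / Riemann--Hurwitz relation for translation surfaces, and is quoted as background from the references). So your proposal supplies strictly more than the paper does. The route you take---triangulate by saddle connections and count angles---is exactly the standard self-contained argument one would give here, and your parenthetical remark that it amounts to the Gauss--Bonnet theorem for flat cone metrics is on point.
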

\begin{definition}
    Let $g > 1$ and consider a partition $\kappa$ of $2g - 2$. A \textit{stratum} $\mathcal{H}(\kappa)$ is defined to be a collection of translation surfaces such that the order of each cone point is given by $\kappa$.
\end{definition}
\begin{example}
Suppose $g = 2$. Then the two partitions $\kappa$ are given by $(2)$ and $(1, 1)$. Elements of the stratum $\mathcal{H}(2)$ have one cone point of degree $2$ and elements of the stratum $\mathcal{H}(1, 1)$ have two cone points each of degree $1$. 
The regular octagon is a member of $\mathcal{H}(2)$ whereas the square-tiled surface in Figure \ref{fig:figure3} is a member of $\mathcal{H}(1, 1)$.
\end{example}
We present a final algebraic definition of translation surfaces as Riemann surfaces. This will be useful in examining the hyperelliptic involution in Proposition \ref{hyper}.
\begin{definition}\label{third} (Third definition of translation surface).
    A \textit{translation surface} is an Abelian differential (global section of cotangent bundle) on a Riemann surface.
\end{definition}
The significance of this definition comes from the following idea from \cite{wright}, which allows one to think about such surfaces algebraically.
\begin{prop}
    If $\omega$ is an Abelian differential on a Riemann surface $X$, and $\Sigma$ is the set of zeros of $\omega$, then $X \setminus \Sigma$ admits an atlas of charts to $\mathbb C$ whose transition maps are translations.
\end{prop}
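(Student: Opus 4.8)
The plan is to build the charts directly from the Abelian differential by integration. Since $\omega$ is holomorphic and, by the definition of $\Sigma$, has no zeros on $X \setminus \Sigma$, near every point $p \in X \setminus \Sigma$ one can choose a simply connected coordinate neighbourhood $U_p \subset X \setminus \Sigma$ and define $\varphi_p \colon U_p \to \mathbb C$ by $\varphi_p(q) = \int_{p}^{q} \omega$, where the integral is taken along any path in $U_p$ from $p$ to $q$. First I would check that $\varphi_p$ is well defined: a holomorphic $1$-form is closed (in a local coordinate $w$ we have $\omega = f(w)\,dw$ with $f$ holomorphic, so $d\omega = 0$), hence by Cauchy's theorem, equivalently the homotopy invariance of integrals of closed forms, the value of the integral does not depend on the chosen path inside the simply connected set $U_p$.

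Next I would verify that $\varphi_p$ is a chart, i.e.\ a biholomorphism onto an open subset of $\mathbb C$, after shrinking $U_p$ if necessary. Writing $\omega = f(w)\,dw$ as above with $f$ holomorphic and nowhere zero on $U_p$, the function $\varphi_p$ is a primitive of $f$, so $\varphi_p' = f \neq 0$, and the holomorphic inverse function theorem shows that $\varphi_p$ is locally biholomorphic. Restricting $U_p$ so that $\varphi_p$ is injective yields a genuine chart, and the family $\{(U_p,\varphi_p)\}_{p \in X\setminus\Sigma}$ covers $X \setminus \Sigma$.

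Finally I would compute the transition maps. If $U_p \cap U_q \neq \emptyset$, then on each connected component $V$ of the overlap both $\varphi_p$ and $\varphi_q$ are primitives of the same form $\omega$, so $d(\varphi_p - \varphi_q) = \omega - \omega = 0$ on $V$; hence $\varphi_p - \varphi_q$ equals a constant $c_V \in \mathbb C$ on $V$. Therefore $\varphi_q \circ \varphi_p^{-1}$, restricted to $\varphi_p(V)$, is the map $z \mapsto z + c_V$, which is a translation, exactly as required.

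The argument is essentially formal once the pieces are assembled; the only point demanding care is the bookkeeping around connectivity — making sure the neighbourhoods $U_p$ are chosen simply connected and small enough for injectivity, and allowing the translation constant to depend on the connected component of the overlap rather than being globally constant across $U_p \cap U_q$. I do not expect any serious obstacle beyond this.
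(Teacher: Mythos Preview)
Your argument is correct and is in fact the standard construction: integrate $\omega$ from a chosen basepoint to obtain local charts, use closedness of holomorphic $1$-forms and simple connectivity for well-definedness, invoke the holomorphic inverse function theorem (using $\omega \neq 0$ on $X \setminus \Sigma$) to get local biholomorphisms, and observe that two primitives of $\omega$ differ by a locally constant function, forcing the transition maps to be translations.

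There is nothing to compare against in the paper itself: the proposition is stated without proof and attributed to \cite{wright}. Your write-up supplies exactly the argument one finds in that reference (and in essentially every introduction to translation surfaces), so it is entirely appropriate here. The only minor stylistic remark is that your closing caveat about the translation constant depending on the connected component of the overlap is correct but can be sidestepped by simply taking the $U_p$ to be connected from the outset, which you effectively already do.
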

One important type of translation surface is the square-tiled surface, also called an origami.
\begin{definition}\label{squaredef}
    A \textit{square-tiled} surface is a translation surface obtained from identifying opposite sides of a finite collection of unit squares in $\mathbb R^2$. 
\end{definition}
\begin{figure}[H]
    \centering
    \includegraphics[width=5cm]{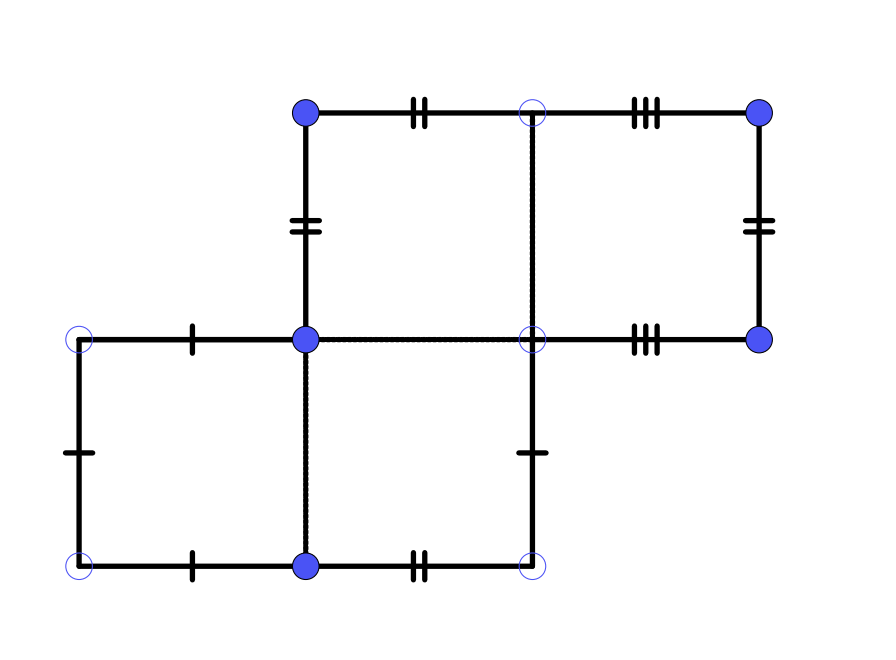}
    \caption{The square-tiled surface shown above has genus $2$ and two singularities, both of order $1$ and cone angle $4\pi$. The image is taken from \cite{massart}.}
    \label{fig:figure3}
\end{figure}
\begin{remark}\label{square}
Many non-square-tiled translation surfaces, such as a regular $2n$-gon, can be thought of as the ``deformed" versions of square-tiled surfaces in a way that is made precise in \cite{massart}.
\end{remark}
Some important background on the concept of circle packings will now be introduced.
\begin{definition}
    A \textit{triangulation} (simplicial $2$-complex) of a surface $S$ is a locally finite decomposition of $S$ into a collection of topologically closed triangles such that any two are either disjoint, intersect at either one or two vertices, or intersect at a single edge.
\end{definition}
Triangulations are permitted to be degenerate in nature, including bigons and loops.
\begin{definition}\label{bigondef}
    A \textit{bigon} in a triangulation consists of two vertices that are connected via two distinct edges.
\end{definition}
\begin{remark} \label{common}
    Two bigons are considered distinct if and only if they share at most one vertex. There will be considerably more to say about bigons in future sections.
\end{remark}
For a given finite union of polygons $\mathfrak{P} \in \mathbb C_p$ with boundary $\partial \mathfrak{P}$ and opposite sides identified, denote by $I(x)$ the set of all points in the plane identified with a specific $x \in \mathbb C$.
\begin{definition}
    A \textit{polygonal sector} $p$ is defined to be a closed region in $\mathbb C$ bounded by the sides of $\mathfrak{P}$ and by a single arc with angle $0 < \alpha \leq 2\pi$.
\end{definition}
\begin{definition}
A \textit{collection of polygonal sectors} $P$ is defined to be a finite set of disjoint polygonal sectors in $\mathbb C$.
\end{definition}
\begin{definition}
    A \textit{boundary segment} of a polygonal sector $p \in P$ is a segment in $\mathbb C$ that is one of the finitely many segments that make up the broken line $\partial p \cap \partial \mathfrak{P}$.
\end{definition}
Let $G$ be a graph with $|P|$ vertices where there exists an edge between two polygonal sectors if and only if there exists a boundary segment of one that identifies entirely with a boundary segment of the other. An equivalence relation $\sim$ on $P$ is defined such that $A \sim B$ if and only if $A$ and $B$ are connected in $G$.
\begin{definition}
\label{def:tscp}
    A collection of polygonal sectors is defined to be a \textit{configuration of generalized circles} if each of the following four conditions hold:
    \begin{itemize}
    \item For all $p \in P$, let $x \in \partial p$. Then for all $y \in I(x)$, there exists $q \in P$ with $p \sim q$ and $y \in \partial q$.
    \item For all $p \in P$, let $x \in p - \partial p$. Then for all $y \in I(x)$, there exists $q \in P$ with $p \sim q$ and $y \in q - \partial q$.
    \item For every polygonal sector $A$:
    \[\sum_{A \sim B} \alpha(B) = 2\pi \cdot k\]
    for some positive integer $k$ where $\alpha: P \to [0, 2\pi)$ is a function that takes in a polygonal sector and outputs the angle of its sole arc. The disk $\bigcup \limits_{A \sim B} B$ is called a \textit{$k$-circle}.
    \item Every $k$-circle is also a circle with respect to the quotient metric on the translation surface i.e. there exists a unique point $c$ called the \textit{center} and a nonnegative real $r$ called the \textit{radius} such that for every point $x$ on the circle, $d(x, c) = r$, where $d$ is the Euclidean metric in Definition \ref{Euclid}. 
    \end{itemize}   
\end{definition}
\begin{definition}
    For a circle configuration on a surface, the \textit{contacts graph} is defined to be a graph $G$ with $n$ vertices $v_1, v_2, \ldots, v_n$ corresponding to the generalized circles $c_1, c_2, \ldots, c_n$ such that $v_i$ and $v_j$ are connected if and only if $c_i$ and $c_j$ are externally tangent.
\end{definition}
\begin{definition}
    A \textit{circle packing} on a translation surface is defined to be a configuration of generalized circles on the surface such that the contacts graph is a triangulation of the surface.
\end{definition}
An example of a circle packing on a sphere is illustrated in Figure \ref{fig:figure4}.
\begin{figure}[H]
    \centering
    \includegraphics[width=5cm]{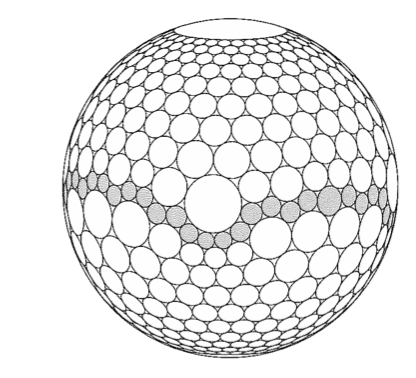}
    \caption{A valid circle packing on a sphere, taken from \cite{stephenson}.}
    \label{fig:figure4}
\end{figure}
The following theorem, from \cite{stephenson}, is quite fundamental to the study of circle packing.
\begin{thm}{(Koebe-Andreev-Thurston, 1936)}.
Let $K$ be a simple planar graph. Then there exists a collection of topological circles $\mathcal{P}_K$ on the Riemann sphere with $K$ as its contacts graph. Furthermore, if $K$ is an triangulation of the Riemann sphere, this circle packing is univalent and unique (up to a M\"obius transformation).
\end{thm}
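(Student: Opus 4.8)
The plan is to establish existence and uniqueness separately, reducing both to the case in which $K$ is a triangulation of $S^2$. If $K$ is only assumed simple and planar, one embeds it in $S^2$ and enlarges it to a triangulation $\widehat K$ of $S^2$ by adjoining new vertices and edges, arranged so that no added edge joins two vertices already present in $K$ (this can always be done; see \cite{stephenson}); a circle packing realizing $\widehat K$ then restricts, upon discarding the circles of the new vertices, to a family whose contacts graph is exactly $K$. So it suffices to build a packing for a triangulation $K$ of $S^2$ and to show it is unique up to M\"obius transformations. The route I would take is to reconstruct the packing from its list of radii, working in the hyperbolic plane rather than on the sphere --- this is essentially Thurston's ``maximal packing'' construction.

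Remove a vertex $v_0$ from $K$; what remains is a triangulation $T$ of a closed topological disk whose boundary cycle is the link of $v_0$. One looks for a hyperbolic circle packing of $T$ in which the circles of the boundary vertices are horocycles and every interior vertex has total angle $2\pi$ around it. Such a packing is recorded by a \emph{label} assigning a radius in $(0,\infty]$ to each vertex (the value $\infty$ for a horocyclic boundary vertex): to each triangle $(x,y,z)$ of $T$ one forms the hyperbolic comparison triangle with side lengths $r_x+r_y$, $r_y+r_z$, $r_z+r_x$ (the distances forced between the centres of mutually tangent circles, with the obvious limiting convention when a radius is $\infty$), lets $\theta_y(x,y,z)$ be its angle at the corner $y$, and sets $\Theta_y(r)=\sum\theta_y(x,y,z)$, the sum over the triangles at $y$. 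We seek a label with $\Theta_y(r)=2\pi$ at every interior vertex $y$. The analytic input is the monotonicity of the comparison angles in the logarithmic radii $u_v=\log r_v$ ($v$ interior): increasing $r_y$ with its neighbours held fixed decreases $\theta_y$, while increasing a neighbour's radius increases $\theta_y$. This makes $u\mapsto\bigl(2\pi-\Theta_y(u)\bigr)_y$ the gradient of a strictly convex functional of Colin de Verdi\`ere type, which one checks is proper as a function of the interior log-radii; hence it attains its unique minimum, and the minimizer is a label with $\Theta_y\equiv 2\pi$ at all interior vertices. From such a label one \emph{develops} the comparison triangles one at a time into $\mathbb H^2$; the condition $\Theta_y=2\pi$ says exactly that the picture closes up around every interior vertex, so the developing map is well defined on the simply connected disk $T$, and a further argument --- a winding-number / discrete Gauss--Bonnet count, or the geometry of the associated hyperideal polyhedron --- shows it has no overlaps, hence is an embedding. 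In the disk model the resulting horocycles are Euclidean circles internally tangent to $\partial_\infty\mathbb H^2$; reinstating $v_0$ as the disk complementary to $\mathbb H^2$ in $S^2=\mathbb C\cup\{\infty\}$ produces a circle packing on $S^2$ with contacts graph $K$, and an arbitrary M\"obius transformation adjusts it to any other normalization.

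For uniqueness, let $\mathcal P$ and $\mathcal P'$ be circle packings on $S^2$ with contacts graph $K$. Apply a M\"obius transformation carrying the circle of a chosen vertex $v_0$ in $\mathcal P'$ onto the corresponding circle in $\mathcal P$, and conjugate by a further M\"obius map so that this common circle is the disk complementary to $\mathbb H^2$ in $S^2$; then deleting $v_0$ exhibits both $\mathcal P$ and $\mathcal P'$ as hyperbolic packings of the triangulated disk $T$ with the same horocyclic boundary and all interior angle sums $2\pi$. Each such packing is determined by its label, and that label is the unique minimizer of the strictly convex functional above; hence the two labels coincide, the two hyperbolic packings coincide, and $\mathcal P$ and $\mathcal P'$ agree after the normalizing M\"obius maps, which is the asserted uniqueness up to M\"obius transformations. (Alternatively, uniqueness follows from a discrete maximum principle: at a vertex maximizing $r_v/r'_v$ that is not on the boundary, the angle monotonicity forces every neighbour to attain the same ratio, and propagating this through the connected graph down to the boundary, where the ratio is $1$, gives $r\equiv r'$.) Univalence of the packing produced in the existence argument is established there.

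The step I expect to be the main obstacle is the one already flagged: passing from a label with the correct interior angle sums to an \emph{embedded} packing, i.e. showing that the developing map assembled from the comparison triangles is globally injective rather than merely a local homeomorphism. This is the one genuinely global, topological ingredient, and it has no face-by-face proof. A close second is the careful verification of the monotonicity, strict convexity, and properness underlying the variational step, so that the minimum is actually attained in the interior of the label space --- with no radius degenerating to $0$ or escaping to $\infty$ --- and is unique.
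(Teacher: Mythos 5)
The paper itself offers no proof of this statement: it is quoted as classical background, with \cite{stephenson} as the reference, so there is nothing in the paper to compare your argument against line by line. Judged on its own, your outline is the standard Thurston--Beardon--Stephenson route: reduce a simple planar graph to a triangulation of $S^2$ by adjoining vertices and edges with no new edge between old vertices, delete a vertex $v_0$ to get a triangulated closed disk, solve the boundary-value problem for hyperbolic radii with horocyclic boundary labels and interior angle sums $2\pi$ (via the monotonicity of comparison angles and a convex functional of Colin de Verdi\`ere type), develop the comparison triangles into $\mathbb{H}^2$, and get uniqueness up to M\"obius maps either from strict convexity of the functional or from the discrete maximum principle. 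This is a correct and well-chosen strategy, and it is essentially the proof found in the literature the paper cites.

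However, as a proof it is incomplete precisely at the points you yourself flag, and these are not peripheral: (i) properness of the functional --- the argument that the minimizing label has all interior radii finite and nonzero (and that the functional is even well defined with the boundary labels equal to $\infty$) is where the hypothesis that $K$ triangulates the sphere actually enters, and without it the variational step proves nothing; (ii) global injectivity of the developing map, i.e.\ univalence, which is part of the theorem's conclusion and admits no local, face-by-face verification --- you name possible tools (winding number, discrete Gauss--Bonnet, hyperideal polyhedra) but do not carry any of them out. A smaller gap: in the uniqueness argument you implicitly assume that any packing on $S^2$ with contacts graph $K$ has every interior flower wrapping exactly once (angle sum exactly $2\pi$, not a multiple), i.e.\ you exclude branched configurations without comment; the standard statement resolves this by asserting uniqueness among univalent packings, and your maximum-principle variant also needs care at the boundary, where both radii are infinite and the ratio $r_v/r'_v$ is not literally defined. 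So: right approach, but the two global ingredients you defer are the substantive content of the theorem, and a complete write-up would have to supply them.
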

One extension of the result to a torus, due to \cite{schramm}, is given below.
\begin{thm}(Schramm, 1996). 
Let $T$ be a torus with universal cover $\pi: \mathbb C \to T$. Let $\tilde{G} = (\tilde{V}, \tilde{E})$ be a graph embedded in $\mathbb C$ that is invariant under the deck transformations of $\pi$, and let $G = (V, E) =  \pi(\tilde{G})$. Let $P_v$ be a smooth topological disk corresponding to each $v \in V$. Then there exists a doubly periodic packing $Q$ whose contacts graph is $\tilde{G}$ and has $Q_v$ homothetic to $P_{\pi(v)}$.
\end{thm}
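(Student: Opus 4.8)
\section*{Proof proposal}

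This is a Koebe--Andreev--Thurston--type existence statement --- here for the flat torus and for prescribed smooth prototiles rather than round circles --- so the natural route is the \emph{method of continuity}, in the form used by Thurston and extended by Schramm to caged convex bodies. The first move is to descend to the finite quotient: letting $\Lambda\cong\mathbb Z^2$ be the group of deck transformations, so that $T=\mathbb C/\Lambda$, the $\Lambda$-periodic graph $\tilde G$ descends to a finite graph $G=\tilde G/\Lambda$ embedded in $T$; we first treat the case in which $G$ triangulates $T$, the general case following by a standard reduction. A doubly periodic packing in $\mathbb C$ with contacts graph $\tilde G$ is then exactly a finite configuration on $T$: one homothetic copy $Q_v$ of $P_{\pi(v)}$ for each vertex orbit, carrying a flat developing structure, with the $Q_v$ mutually externally tangent precisely along the edges of $G$. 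Thus the problem becomes finite-dimensional: determine the positive scale factors $\sigma=(\sigma_v)_{v\in V}$.

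For each $\sigma\in(0,\infty)^V$ and each triangle $\tau=uvw$ of $G$, I would place mutually externally tangent homothetic copies $\sigma_uP_u,\sigma_vP_v,\sigma_wP_w$; for smooth prototiles such a ``triangle of tiles'' exists and is unique up to a rigid motion, and it carries a well-defined corner angle $\beta_v(\tau,\sigma)$ at $v$ (the analogue of the arc-angle between two tangent circles). Setting $\Theta_v(\sigma)=\sum_{\tau\ni v}\beta_v(\tau,\sigma)$, one develops the triangles around each link, and the configuration closes up to a genuine packing on a flat torus precisely when the angle-defect map $\Phi(\sigma)=(\Theta_v(\sigma)-2\pi)_{v\in V}$ vanishes --- at which point the holonomy of the developing map is a translation representation, so the packing descends to $\mathbb C$ modulo a lattice. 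The core is then to show that $\Phi$, normalized by the one-dimensional global-scaling symmetry, is a proper map between manifolds of the same dimension with nonzero degree, so that $0$ lies in its image. Properness comes from the fixed shapes of the $P_v$: along a sequence in which some ratio $\sigma_v/\sigma_w$ degenerates to $0$ or $\infty$, the corresponding angle sums blow up or collapse. Nonvanishing of the degree, and injectivity of $\Phi$ on its effective domain, both flow from the monotonicity/rigidity lemma --- enlarging $\sigma_v$ relative to its neighbours strictly decreases $\Theta_v$ while increasing the neighbours' angle sums --- which yields a discrete maximum principle at the vertex maximizing a ratio of scale factors, exactly as in the standard rigidity proof for circle packings; hence $\Phi$ is a local homeomorphism and, being proper, surjective. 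One then checks the locally packed configuration is univalent on $T$ by a covering/degree argument and lifts it to the desired doubly periodic packing. A variational alternative, following Colin de Verdi\`ere in the circular case, would realize $\Phi$ as the gradient of a strictly convex functional and minimize it; this repackages the same monotonicity.

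The main obstacle is precisely that the $P_v$ are arbitrary smooth topological disks rather than circles, so neither the existence and rigidity of the elementary ``triangle of three mutually tangent tiles'' nor the monotonic, continuous dependence of its corner angles on the scale ratios is available off the shelf --- for genuine circles these are one-line trigonometric facts, but in general they are the substance of Schramm's caging machinery (his Incompatibility Theorem and the analysis of triple interstices). Transporting that analysis to the torus, where there are no boundary tiles to anchor an induction and one must instead control the degree of $\Phi$ globally, is where the real work sits; the reduction to a finite problem, the Euler-characteristic bookkeeping, and the lifting back to $\mathbb C$ are routine by comparison.
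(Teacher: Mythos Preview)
The paper does not prove this statement. It is quoted as a known result of Schramm, cited to \cite{schramm}, and placed in the Background section alongside the Koebe--Andreev--Thurston theorem and the discrete uniformization theorem; no argument is given or sketched. So there is nothing in the paper to compare your proposal against.

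That said, your outline is a reasonable high-level account of the Schramm-style argument one would expect: reduce to the finite quotient graph on $T$, set up the angle-defect map on the simplex of scale factors, and run continuity/properness plus a monotonicity (maximum-principle) lemma to hit zero defect. You are also right that the genuine content, once the prototiles are arbitrary smooth convex disks rather than round circles, is exactly Schramm's caging/incompatibility machinery for the existence, uniqueness, and monotone dependence of the elementary triple interstice --- that is where a full proof would have to do real work, and your proposal correctly flags it as the main obstacle rather than claiming it for free. For the purposes of this paper, however, the theorem is simply imported as background, so no proof is expected here.
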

As per \cite{stephenson}, it can also be generalized to any Riemann surface with a flat metric, as follows.
\begin{thm}\label{realdut}
Let $K$ be a triangulation of a compact genus $g$ surface. Then there exists a unique conformal structure on $K$ such that the resulting Riemann surface $S_K$ supports a circle packing on $K$ that fills the surface. The packing is unique up to the conformal automorphisms of $S_K$. 
\end{thm}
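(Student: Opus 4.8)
The statement is the \emph{discrete uniformization theorem}, and I would organize a proof around the genus, since the relevant background geometry changes. For $g=0$ the claim reduces to the Koebe--Andreev--Thurston theorem quoted above: there is only one conformal structure on $S^2$, a triangulation of it carries a univalent packing, and the M\"obius ambiguity is exactly the conformal automorphism group of $\widehat{\mathbb C}$. For $g=1$ I would deduce it from Schramm's theorem \cite{schramm}: lifting $K$ to a $\mathbb Z^2$--periodic triangulation $\widetilde K$ of the plane yields a packing of $\widetilde K$ in $\mathbb C$ unique up to similarity, the two generating combinatorial automorphisms are forced to act by translations, and the resulting lattice records the flat (hence conformal) structure of the torus, its scaling freedom matching ``unique up to conformal automorphism.'' So the real content is $g\ge 2$, where the uniformizing structure is hyperbolic: a \emph{filling} packing forces the angle sum at every vertex to equal $2\pi$, so by Gauss--Bonnet the supporting surface must be assembled from interstitial triangles of curvature $-1$, and I would realize $S_K$ as a quotient $\mathbb D/\Gamma$ with $\mathbb D=\mathbb H^2$.

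For existence I would lift $K$ to a $\pi_1(S)$--invariant triangulation $\widetilde K$ of the open disk and build a packing $\widetilde P$ of $\widetilde K$ in $\mathbb D$ equivariant under a discrete faithful cocompact representation $\Gamma\colon\pi_1(S)\to\mathrm{Isom}^+(\mathbb D)$. To produce $\widetilde P$, exhaust $\widetilde K$ by finite, simply connected subcomplexes $K_1\subset K_2\subset\cdots$; by the maximal-packing theorem for finite disk triangulations (the hyperbolic Andreev--Thurston theorem; see \cite{stephenson}) each $K_n$ has an essentially unique packing $P_n$ in $\mathbb D$ whose boundary circles are all horocycles. After normalizing, say by fixing the circle of a basepoint vertex, the Ring Lemma of Rodin--Sullivan \cite{rodinsullivan} bounds the ratio of hyperbolic radii of adjacent circles uniformly and hence controls the $P_n$ on compact subsets of $\mathbb D$; a compactness argument (uniform radius bounds plus a diagonal extraction) then yields a subsequential limit $\widetilde P$, a packing of all of $\widetilde K$ that fills $\mathbb D$. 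Equivariance comes from the rigidity discussed next: for each deck transformation $\gamma$, the packing $\gamma\widetilde P$ has the same combinatorics and also fills $\mathbb D$, so it differs from $\widetilde P$ by a M\"obius automorphism of $\mathbb D$; the assignment of that automorphism to $\gamma$ is the representation $\Gamma$, the quotient $S_K:=\mathbb D/\Gamma(\pi_1 S)$ carries the hyperbolic, hence conformal, structure, and $\widetilde P$ descends to a filling packing of $S_K$ with nerve $K$.

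Uniqueness --- and the rigidity used just above --- is where I expect the real work. Given $K$ underlying two conformal structures each supporting a filling packing, lifting gives equivariant filling packings $P,P'$ of $\widetilde K$ in $\mathbb D$ with hyperbolic radius functions $\rho,\rho'$ on the vertex set; both are $\pi_1(S)$--invariant, so $\rho/\rho'$ descends to the finite vertex set of $K$. The key input is the hyperbolic \emph{flower monotonicity lemma} (as in Thurston's notes \cite{thurston}): with the other radii fixed, the angle sum at a vertex $v$ strictly decreases in $\rho(v)$, strictly increases in each neighboring radius, and strictly decreases if all radii of the flower are multiplied by a common factor $>1$. Let $v_0$ maximize $\rho/\rho'$ over $K$, with value $\mu$, and compare the flower of $P$ at $v_0$ with the flower obtained by assigning the radii $\mu\rho'$: the two centers agree while each petal radius $\mu\rho'(w)$ is $\ge\rho(w)$, so the latter flower has angle sum $\ge 2\pi$; but if $\mu>1$ this same flower is the flower of $P'$ at $v_0$ with all radii scaled by $\mu>1$, hence has angle sum $<2\pi$, a contradiction. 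So $\mu\le 1$, i.e.\ $\rho\le\rho'$ everywhere, and by symmetry $\rho\equiv\rho'$; matching radii and oriented combinatorics then identifies $P$ and $P'$ up to an element of $\mathrm{Isom}(\mathbb D)$, which descends to a conformal automorphism identifying the two structures. The main obstacle, then, is to carry this existence/rigidity package through in the presence of the degenerate faces --- loops and bigons --- that the definition of triangulation in this paper explicitly allows: a loop makes a vertex appear in its own link and a bigon makes two vertices adjacent along two distinct faces, so the flowers, the angle-sum equations, the Ring Lemma estimate, and the monotonicity comparison must all be phrased around faces rather than link-vertices, with the scaling and monotonicity behaviour re-checked in that generality. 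Establishing the hyperbolic flower monotonicity lemma itself and the normal-families compactness for the exhausting packings are the other technical cores; granting those, the genus reduction and the maximum-principle argument proceed as sketched.
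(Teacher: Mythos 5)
This theorem is quoted in the paper as background from \cite{stephenson}; the paper gives no proof of it, so there is no internal argument to compare against --- your proposal is instead a reconstruction of the standard discrete uniformization proof of Beardon--Stephenson/Thurston, and as a sketch it is essentially correct. The genus split ($g=0$ via Koebe--Andreev--Thurston, $g=1$ via Schramm/flat lattices, $g\ge 2$ hyperbolic), the lift to a $\pi_1$-invariant triangulation of the disk, exhaustion by finite complexes with horocyclic maximal packings, Ring Lemma compactness, equivariance, and the maximum-principle rigidity via hyperbolic flower monotonicity are exactly the ingredients of the literature proof. Two points in your outline lean on inputs that are themselves substantial and should be cited or argued rather than folded into ``the rigidity discussed next'': (i) the equivariance step applies a uniqueness statement to the \emph{infinite} filling packing of $\mathbb D$ (a He--Schramm/Schramm-type rigidity result, not the compact maximum-principle argument you give, which only runs on the finite vertex set of $K$ after you already have two equivariant packings); and (ii) the compactness argument must also show the limit packing is locally finite and actually fills $\mathbb D$ (boundary circles escape), which needs more than uniform ratio bounds. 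Your final caveat is well placed and worth emphasizing in the context of this paper: the triangulations here explicitly admit loops and bigons, whereas the theorem as stated in \cite{stephenson} is for simplicial complexes, so either the degenerate faces must be handled by rephrasing the flower/angle-sum machinery as you indicate, or the theorem should only be invoked for the simplicial case --- a gap in the paper's citation rather than in your argument.
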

The automorphisms of complex tori can be characterized as follows, according to \cite{miranda}.
\begin{prop}\label{deg}
    Suppose that $X = \mathbb C/\Lambda$ is a complex torus and $F: X \to X$ is an automorphism. Then $F$ is induced by a linear map $G(z) = az + b$, where $a, b, \in \mathbb C$ and $a\Lambda = \Lambda$. Additionally, $a$ is a root of unity and has a finite number of possibilities.
\end{prop}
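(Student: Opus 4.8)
The plan is to lift $F$ to the universal cover of $X$, show that the lift is affine, and then extract the arithmetic constraints on the multiplier $a$ from the requirement that multiplication by $a$ preserve the lattice. First I would use that the universal cover of $X = \mathbb C/\Lambda$ is the quotient map $\pi \colon \mathbb C \to X$; since $\mathbb C$ is simply connected and locally path connected, $F \circ \pi$ admits a continuous lift $\tilde F \colon \mathbb C \to \mathbb C$ with $\pi \circ \tilde F = F \circ \pi$, and $\tilde F$ is holomorphic because locally it equals a holomorphic local inverse of $\pi$ composed with the holomorphic maps $F$ and $\pi$. For each $\lambda \in \Lambda$ the map $z \mapsto \tilde F(z + \lambda) - \tilde F(z)$ takes values in $\Lambda$ (it projects to the zero map on $X$), is continuous, and $\mathbb C$ is connected, so it is constant; differentiating in $z$ shows $\tilde F'$ is $\Lambda$-periodic. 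A $\Lambda$-periodic entire function is bounded, being continuous on the closure of a fundamental parallelogram, hence constant by Liouville's theorem, so $\tilde F(z) = az + b$ for some $a, b \in \mathbb C$. This is the asserted map $G(z) = az + b$, and $F$ is induced by it.

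Next I would identify the condition $a\Lambda = \Lambda$. Since $F$ is an automorphism it is bijective, so $a \neq 0$; moreover $G$ descends to $X$ exactly when $a\Lambda \subseteq \Lambda$, and applying the same reasoning to $F^{-1}$, whose lift is $z \mapsto a^{-1}(z - b)$, gives $a^{-1}\Lambda \subseteq \Lambda$, so together these force $a\Lambda = \Lambda$. Fixing a $\mathbb Z$-basis $\omega_1, \omega_2$ of $\Lambda$, multiplication by $a$ is then a $\mathbb Z$-linear automorphism of $\Lambda \cong \mathbb Z^2$, hence given by a matrix $M \in GL_2(\mathbb Z)$, so $\det M = \pm 1$; the relations expressing $a\omega_1$ and $a\omega_2$ in the basis exhibit $a$ as an eigenvalue of $M$, hence a root of the integer polynomial $x^2 - (\operatorname{tr} M)\,x \pm 1$. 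In particular $a$ is an algebraic integer of degree at most two, all of whose conjugates are among the roots of this quadratic.

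It remains to show $a$ is a root of unity with only finitely many possible values, and this is the one step requiring genuine care. The key claim is that $|a| = 1$: if $\lambda_0 \in \Lambda \setminus \{0\}$ has minimal length $\ell > 0$, then for every $n \in \mathbb Z$ the vector $a^n \lambda_0$ lies in $\Lambda \setminus \{0\}$ (using $a\Lambda = \Lambda$) and has length $|a|^n \ell$, so if $|a| \neq 1$ then letting $n \to +\infty$ or $n \to -\infty$ produces nonzero lattice vectors of arbitrarily small length, contradicting discreteness of $\Lambda$. Since the product of the two roots of $x^2 - (\operatorname{tr} M)\,x \pm 1$ equals $\pm 1$, the conjugate of $a$ also has absolute value $1/|a| = 1$, so $a$ is an algebraic integer all of whose conjugates lie on the unit circle, hence a root of unity by Kronecker's theorem. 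More concretely, one can simply enumerate: writing $t = \operatorname{tr} M \in \mathbb Z$, the condition that both roots of $x^2 - tx \pm 1$ lie on the unit circle forces $t \in \{-2, -1, 0, 1, 2\}$, and the resulting quadratics yield $a \in \{\pm 1,\ \pm i,\ e^{\pm i\pi/3},\ e^{\pm 2\pi i/3}\}$, so $a$ has order $1$, $2$, $3$, $4$, or $6$; in particular there are finitely many possibilities. The main obstacle is thus concentrated here — deducing $|a| = 1$ from discreteness of $\Lambda$ and then passing to "root of unity" via Kronecker's theorem or the explicit finite case check — while everything before it is a routine lifting-and-Liouville argument.
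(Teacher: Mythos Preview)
Your argument is correct and is in fact the standard textbook proof: lift to the universal cover, use Liouville to see the lift is affine, deduce $a\Lambda=\Lambda$ from invertibility, and then extract that $a$ is a root of unity either via Kronecker's theorem or the explicit trace bound $|t|\le 2$. One minor point of phrasing: when you say ``the conjugate of $a$'' in the Kronecker step you mean the other root of the characteristic polynomial of $M$, which coincides with the Galois conjugate only when $a\notin\mathbb Q$; but if $a\in\mathbb Q$ then $a$ is already $\pm 1$ and there is nothing to prove, so the argument goes through.

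As for comparison with the paper: the paper does not give its own proof of this proposition. It is stated with attribution to \cite{miranda} and used as a black box to derive Corollary~\ref{dut}. Your write-up supplies exactly the argument that the cited reference contains, so there is no methodological difference to discuss---you have simply filled in what the paper leaves to the literature.
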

Using this theorem, one can show the following corollary that will be useful for subsequent proofs.
\begin{cor}\label{dut}
Let $P = (P_v, v \in V)$ and $Q = (Q_v, v \in V)$ be two circle packings on the same torus based upon the same triangulation, and suppose that there exist vertices $a, b \in V$ such that $P_a = Q_a$ and $P_b = Q_b$. Then $P$ and $Q$ are identical circle packings (i.e. $P_v = Q_v$ for all vertices $v \in V$).
\end{cor}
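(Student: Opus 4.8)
The plan is to reduce everything to Proposition \ref{deg} by the standard trick of transporting one packing onto the other via an automorphism of the torus, and then using the two fixed circles to pin down that automorphism to the identity. Concretely, lift both packings $P$ and $Q$ to doubly periodic packings $\tilde P$ and $\tilde Q$ on the universal cover $\mathbb C$, both based on the same lifted triangulation $\tilde G$. By the uniqueness clause in Theorem \ref{realdut} (or equivalently the torus statement of Schramm, combined with the fact that a fixed conformal structure on the torus is rigid up to automorphism), the two packings on the given torus $X = \mathbb C/\Lambda$ must be related by a conformal automorphism $F: X \to X$ carrying $P$ to $Q$ vertex-by-vertex, i.e. $F(P_v) = Q_v$ for all $v \in V$, where $F$ acts on circles in the obvious way (it sends the center to the center and scales the radius by $|a|$ if $F$ is induced by $z \mapsto az+b$).

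Next I would feed in the hypothesis. Since $a$ is a root of unity by Proposition \ref{deg}, we have $|a| = 1$, so $F$ is an isometry of the flat torus: it preserves all radii. Now use $P_a = Q_a$ and $P_b = Q_b$. Writing $z_a, z_b$ for the centers of $P_a, P_b$ (equivalently $Q_a, Q_b$), the relation $F(P_a) = Q_a$ forces $F$ to fix the center $z_a$, and likewise $F$ fixes $z_b$. So the lift $G(z) = az + b'$ has two fixed points $z_a, z_b$ in the torus (more carefully: $G$ permutes the lattice of lifts of $z_a$ and of $z_b$, and we may choose the representative so that $G(z_a) = z_a$). From $G(z_a) = z_a$ and $G(z_b) = z_b$ we get $a(z_b - z_a) = z_b - z_a$ in $\mathbb C$; since $z_a \neq z_b$ as points of the torus, $z_b - z_a$ lifts to a nonzero complex number, hence $a = 1$, and then $b' = z_a - a z_a = 0$, so $G = \mathrm{id}$ and $F = \mathrm{id}$. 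Therefore $P_v = F(P_v) = Q_v$ for every $v$, as desired.

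The main obstacle is the bookkeeping around the fixed points: on the torus an automorphism induced by $z \mapsto az + b$ need not literally fix a given point, but only fix it up to a lattice translation, so one has to be careful that ``$F(P_a) = Q_a$ and $P_a = Q_a$'' really does pin $a = 1$ rather than allowing $a$ to be a nontrivial root of unity acting with an isolated fixed point. The resolution is that having two distinct fixed points $z_a \neq z_b$ is what kills all the nontrivial roots of unity: a rotation by a nontrivial root of unity about $z_a$ moves every other point of the torus (its fixed-point set is finite and, once we've normalized so $z_a$ is fixed, $z_b$ can be arranged to be a genuine fixed point only if the rotation angle is trivial), so two fixed circles is exactly the right amount of rigidity. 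A secondary, purely expository point is to make sure the action of $F$ on ``generalized circles'' / $k$-circles in the sense of Definition \ref{def:tscp} is well-defined — but since $F$ is an isometry this is immediate, as isometries send metric circles to metric circles of the same radius and preserve tangency and the triangulation.
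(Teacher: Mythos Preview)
Your approach matches the paper's sketch in the remark following the corollary: invoke Theorem~\ref{realdut} to obtain a conformal automorphism $F$ with $F(P_v)=Q_v$, then use Proposition~\ref{deg} together with the two fixed circles to pin $F$ down to the identity. The gap is in the last step. You assert that once $F$ fixes $z_a$, a nontrivial root of unity would move every other point of the torus, so the second fixed center $z_b$ forces $a=1$. That is false: for $a=-1$ the map $z\mapsto -z$ on any complex torus $\mathbb C/\Lambda$ has four fixed points, namely the $2$-torsion points $\tfrac12\Lambda/\Lambda$ (and on the square or hexagonal torus the order-$4$ or order-$6$ rotations likewise have several fixed points). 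Concretely, after normalizing so that $G(z_a)=z_a$ in $\mathbb C$, all you know about the second center is $G(z_b)\equiv z_b\pmod\Lambda$, which yields $(a-1)(z_b-z_a)\in\Lambda$, not $(a-1)(z_b-z_a)=0$.

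This is not just bookkeeping. If $P$ happens to be invariant under such an involution $\iota$ and $P_a,P_b$ are centered at two of its fixed points, then $Q_v:=\iota(P_v)$ is a packing on the same triangulation with $Q_a=P_a$ and $Q_b=P_b$, yet $Q\neq P$ whenever the induced combinatorial automorphism of $T$ is nontrivial. So the statement, read literally, needs a mild extra hypothesis---for instance that $z_b-z_a$ is not a half-lattice vector, or more generally that no nontrivial torus automorphism fixes both centers. The paper's own remark is only a heuristic dimension count and glosses over exactly this point; in the later applications the two fixed circles are the tangent double circles along the slit, where one can argue the exceptional finite rotations do not occur, but your write-up should make that explicit rather than claim a nontrivial rotation on a torus has a unique fixed point.
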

\begin{remark}
    The idea behind this proof is that Proposition \ref{deg} implies that the conformal automorphism group for tori has $2$ degrees of freedom (since $b \in \mathbb C$ and $a$ has a finite number of possible values). Thus, fixing two circles on the surface is sufficient to fix all the other circles in the packing in place as well.
\end{remark}
We will now introduce a natural map on genus $2$ Riemann surfaces.
\begin{definition}
    A Riemann surface is called \textit{hyperelliptic} if and only if it is conformally equivalent to the curve formed from the two-valued analytic function:
    \[f(z) = \sqrt{\prod \limits_{i = 1}^{n} (z - a_i)}\]
    for $n > 4$.
\end{definition}
\begin{remark}
    For $n = 1, 2$, the curve formed from $f(z)$ is conformally equivalent to the Riemann sphere. For $n = 3, 4$, the curve formed from $f(z)$ is conformally equivalent to the complex torus.
\end{remark}
Hyperelliptic surfaces are important because of the following result from \cite{bouwman}.
\begin{prop}
    Every hyperelliptic surface $M$ admits a unique conformal involution $\eta$ that has exactly $2g + 2$ fixed points, where $g$ is the genus of $M$. Furthermore, $M/\eta$ is equivalent to the Riemann sphere.
\end{prop}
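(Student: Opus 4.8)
The statement has three parts: existence of a hyperelliptic involution $\eta$, the fact that it has exactly $2g+2$ fixed points, and the identification $M/\eta \cong \widehat{\mathbb C}$. The plan is to extract existence and the fixed-point count directly from the algebraic model, and then to isolate uniqueness as the one substantive step.

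For existence I would start from the defining model: a genus $g$ hyperelliptic surface $M$ is conformally equivalent to the smooth projective completion of the affine curve $y^2 = \prod_{i=1}^{n}(z - a_i)$, where, after absorbing a point over $z=\infty$ if necessary, we may assume $n \in \{2g+1,\, 2g+2\}$ with the $a_i$ distinct. On this model define $\eta(z,y) = (z,-y)$. This is holomorphic on the affine part, is visibly an involution, and extends holomorphically across the one or two points lying over $z = \infty$; hence it is a conformal involution of $M$. The projection $\pi(z,y) = z$ exhibits $M$ as a degree-$2$ branched cover of $\mathbb P^1 = \widehat{\mathbb C}$ whose deck transformation is exactly $\eta$, and so $M/\eta$ is canonically identified with $\widehat{\mathbb C}$, the Riemann sphere. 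For the fixed-point count, the fixed set of $\eta$ consists of the points with $y = 0$ together with the point over $z=\infty$ when $n$ is odd; these are precisely the branch points of $\pi$. When $n = 2g+2$ these are the $2g+2$ roots $a_i$, and when $n = 2g+1$ they are the $2g+1$ roots together with $\infty$, so in either case there are exactly $2g+2$ of them. As a consistency check, Riemann--Hurwitz for $\pi : M \to \mathbb P^1$ reads $2 - 2g = 2(2 - 2\cdot 0) - b$, forcing $b = 2g+2$, which matches.

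The heart of the proposition, and the step I expect to be the main obstacle, is uniqueness, since it cannot be read off any single model. Suppose $\eta'$ is another conformal involution of $M$ with $2g+2$ fixed points. Applying Riemann--Hurwitz to the quotient $M \to M/\eta'$ forces $M/\eta'$ to have genus $0$, hence $M/\eta' \cong \mathbb P^1$ and $\eta'$ is the sheet-interchange of some degree-$2$ holomorphic map $\pi' : M \to \mathbb P^1$. Such a map is equivalent data to a base-point-free pencil $g^1_2$ on $M$, so it suffices to show that a hyperelliptic curve of genus $g \ge 2$ carries a \emph{unique} $g^1_2$: once this is known, $\pi$ and $\pi'$ agree up to a M\"obius transformation of the target and therefore share the same deck transformation, i.e. $\eta' = \eta$. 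Uniqueness of the $g^1_2$ is classical and would be established via Riemann--Roch together with Clifford's inequality; concretely, two distinct pencils of degree $2$ would combine into a birational map of $M$ onto a curve of bidegree $(2,2)$ in $\mathbb P^1 \times \mathbb P^1$, whose arithmetic genus is $1$, forcing $g \le 1$ and a contradiction.

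This last point is also where the hypothesis $g \ge 2$ genuinely enters: for $g \le 1$ there is no distinguished $g^1_2$ and no distinguished involution, so the characterization fails there. An alternative, slightly cleaner route to uniqueness, which I would mention, is through the canonical map: for hyperelliptic $M$ the map $\phi_{|K|} \colon M \to \mathbb P^{g-1}$ is two-to-one onto a rational normal curve, so its fibers, and hence the sheet-interchanging involution, are determined by the holomorphic structure of $M$ alone. Either way, the existence and quotient statements are essentially formal consequences of the model, while the real content is the rigidity of the degree-$2$ map.
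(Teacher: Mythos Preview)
Your proof is correct and follows the standard textbook route: construct $\eta$ explicitly on the affine model $y^{2}=\prod(z-a_i)$, read off the fixed points as branch points (with Riemann--Hurwitz as a sanity check), and then argue uniqueness via the uniqueness of the $g^{1}_{2}$ on a curve of genus $g\ge 2$, equivalently via the factorization of the canonical map through the rational normal curve. The only remark I would add is that your uniqueness argument actually proves slightly more than the statement asks: you show that \emph{any} involution whose quotient has genus zero must coincide with $\eta$, not merely those with $2g+2$ fixed points, since Riemann--Hurwitz already forces the fixed-point count once the quotient genus is zero.

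As for comparison: the paper does not supply its own proof of this proposition. It is quoted as background from \cite{bouwman} and left unproven, so there is no in-paper argument to weigh yours against. Your write-up would in fact serve as a self-contained justification that the paper currently lacks.
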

\begin{definition}
    A \textit{hyperelliptic involution} is the involution associated with a hyperelliptic Riemann surface.
\end{definition}
\begin{prop}\label{hyper} 
Every Riemann surface $M$ of genus $2$ is hyperelliptic (i.e. admits a conformal hyperelliptic involution $\eta: M \to M$ with exactly six fixed points).
\end{prop}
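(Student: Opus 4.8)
The plan is to produce the involution from the canonical map of $M$ and then count its fixed points with Riemann--Hurwitz. I will freely use that the space of Abelian differentials on a compact Riemann surface of genus $g$ has dimension $g$, together with the Riemann--Roch theorem in the form $\ell(D) - \ell(K-D) = \deg D - g + 1$ for a canonical divisor $K$ (so $\deg K = 2g-2$ and $\ell(K) = g$).

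First I would show that for $g = 2$ the canonical map $\phi\colon M \to \mathbb{P}^1$ is a well-defined holomorphic map of degree $2$. Here $\deg K = 2$ and $\ell(K) = 2$. To see $|K|$ is base-point-free, apply Riemann--Roch to $D = p$ for a point $p \in M$: $\ell(p) - \ell(K - p) = 1 - 2 + 1 = 0$, and since a surface of genus $\geq 1$ admits no meromorphic function with a single simple pole (such a function would be a degree-$1$ map to $\mathbb{P}^1$, forcing $M \cong \mathbb{P}^1$), we get $\ell(p) = 1$, hence $\ell(K - p) = 1 < 2 = \ell(K)$ and $p$ is not a base point. Fixing a basis $\omega_0, \omega_1$ of the Abelian differentials, $\phi = [\omega_0 : \omega_1]$ is therefore a non-constant holomorphic map; the preimage of a generic point of $\mathbb{P}^1$ is an effective divisor in $|K|$ of degree $2$, and since $\phi(M)$ is an irreducible curve in $\mathbb{P}^1$ it equals $\mathbb{P}^1$. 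Comparing degrees gives $\deg\phi = 2$.

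Next I would extract the involution. Over the complement of the finitely many branch values, $\phi$ restricts to a connected $2$-sheeted covering of the punctured sphere, and a connected degree-$2$ covering is Galois, so it carries a nontrivial deck transformation of order $2$. Because $\phi$ is proper and holomorphic, this deck transformation extends across the branch points (removable singularities) to a biholomorphism $\eta\colon M \to M$ with $\eta^2 = \mathrm{id}$ and $\phi\circ\eta = \phi$; thus $M/\eta \cong \mathbb{P}^1$ and the fixed points of $\eta$ are exactly the ramification points of $\phi$. Riemann--Hurwitz gives $2g(M) - 2 = 2\bigl(2\cdot 0 - 2\bigr) + \sum_p (e_p - 1)$, i.e. $2 = -4 + \sum_p(e_p-1)$, so $\sum_p(e_p - 1) = 6$; since $\deg\phi = 2$ forces every $e_p \in \{1,2\}$, there are exactly six ramification points and hence $\eta$ has exactly six fixed points. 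To match the definition of hyperelliptic stated above, apply a M\"obius transformation of $\mathbb{P}^1$ so that $\infty$ is not a branch value, label the branch values $a_1,\dots,a_6$, and use the standard structure of degree-$2$ covers of the line (the function field $\mathbb{C}(M)$ is obtained from $\mathbb{C}(z)$ by adjoining a square root of a squarefree polynomial whose roots are the finite branch values) to identify $M$ with the smooth model of $y^2 = \prod_{i=1}^{6} (z - a_i)$. Since $6 > 4$, this exhibits $M$ as hyperelliptic with $\eta$ its hyperelliptic involution; uniqueness of $\eta$ is then exactly the previously cited statement that a hyperelliptic surface carries a unique such involution.

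I expect the main obstacle to be the first step: checking carefully that the canonical system is base-point-free and that the induced map has degree exactly $2$ (it is precisely here that genus $= 2$ enters, ruling out $M$ being rational or elliptic). The passage to the deck involution is routine covering-space theory provided one is attentive about extending the deck transformation over the branch locus, and the fixed-point count together with the identification with $y^2 = \prod_{i=1}^{6}(z-a_i)$ are direct.
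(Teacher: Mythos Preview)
Your argument is correct and is in fact the standard proof: base-point-freeness of $|K|$ via Riemann--Roch, the canonical map as a degree-$2$ cover of $\mathbb{P}^1$, extraction of the deck involution, and the Riemann--Hurwitz count of six ramification points all go through exactly as you describe.

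However, there is nothing to compare against: the paper states this proposition as a background result from the literature and supplies no proof of its own. It is listed among the preliminary facts in \S\ref{sec:back} (alongside the cited result that hyperelliptic surfaces admit a unique such involution) and is simply invoked later. So your proposal is not an alternative to the paper's proof but rather a proof the paper omits entirely.
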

\begin{remark}
    Since every translation surface can be thought of as a Riemann surface with an associated Abelian differential as per Definition \ref{third}, it follows that every genus $2$ translation surface is hyperelliptic. The hyperelliptic involution for a genus $2$ surface provides a natural symmetry of this surface.
\end{remark}
A key result about genus $2$ translation surfaces (from \cite{bouwman}) is that they can be thought of in terms of their saddle connections and the hyperelliptic involution.
\begin{thm}(McMullen, 2007).
Let $M$ be a translation surface of genus $2$ (either in the $\mathcal{H}(1, 1)$ stratum or the $\mathcal{H}(2)$ stratum). Then $M$ contains a saddle connection $J$ such that $J \neq \eta(J)$ and splits along $J \cup \eta(J)$ into the connected sum of two slit tori. Refer to Figure \ref{fig:figure5}.
\end{thm}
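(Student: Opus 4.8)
The plan is to use the hyperelliptic involution $\eta$ supplied by Proposition~\ref{hyper}, together with a shortest saddle connection, and to take $\Gamma=J\cup\eta(J)$ as the cut locus. \emph{Local structure of $\eta$.} View $M$ as a Riemann surface carrying the Abelian differential $\omega$ that records its translation structure. The hyperelliptic involution satisfies $\eta^{*}\omega=-\omega$, so in every translation chart on $M\setminus\Sigma$ the map $\eta$ has the form $z\mapsto -z+c$; hence $\eta$ is an orientation-preserving isometry that reverses the direction of every geodesic and so sends each saddle connection to a saddle connection of equal length. At a zero of $\omega$ of order $m$, pick the coordinate $w$ in which $\omega=w^{m}\,dw$; since $\eta$ is a finite-order holomorphic automorphism fixing that point it is linear, $w\mapsto\zeta w$, in suitable coordinates, and $\eta^{2}=1$ together with $\eta^{*}\omega=-\omega$ forces $\zeta^{2}=1$ and $\zeta^{m+1}=-1$, which is solvable only for $m$ even. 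Consequently the unique cone point in $\mathcal{H}(2)$ is one of the six fixed points of $\eta$, while the two order-one cone points in $\mathcal{H}(1,1)$ are not fixed and are therefore interchanged by $\eta$.

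\emph{Choosing $J$.} Let $J$ be a saddle connection of minimal length. A surgery argument shows that two distinct saddle connections of minimal length meet only at cone points: a transverse crossing at a regular point would let one average the four concatenations of half-arcs meeting there to get a path between cone points of length at most the minimum, strictly less unless the crossing degenerates, contradicting minimality. Applied to $J$ and $\eta(J)$ (same length) this yields disjointness off $\Sigma$, so it remains to secure $\eta(J)\neq J$. Since $\eta$ reverses $J$, the equality $\eta(J)=J$ would force the midpoint of $J$ to be a fixed point of $\eta$; in $\mathcal{H}(1,1)$ this further forces $J$ to join the two interchanged cone points through that Weierstrass midpoint, and in $\mathcal{H}(2)$ it makes $J$ a loop at the fixed cone point with Weierstrass midpoint. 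Using that $\eta$ permutes the finitely many saddle connections below any length bound, and that not all of the short ones can be pinned in this way, I would either conclude directly or else pass to the shortest saddle connection in a second holonomy direction, obtaining $J$ with $\eta(J)\neq J$ and $J\cap\eta(J)\subseteq\Sigma$.

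\emph{Cutting.} With such a $J$, set $\Gamma=J\cup\eta(J)$, an $\eta$-invariant graph meeting $\Sigma$ only at the endpoints of $J$, and cut $M$ along $\Gamma$. The cleanest identification of the pieces goes through the quotient: $M/\eta$ is a sphere with six branch points, $\Gamma$ descends to an embedded circle (in $\mathcal{H}(1,1)$ avoiding all branch points; in $\mathcal{H}(2)$ through one of them), and the two complementary disks, containing three branch points apiece, lift under the branched double cover to the two components of $M\setminus\Gamma$. A Riemann--Hurwitz count then shows each component is a torus with one boundary circle, i.e.\ a slit torus whose slit is the relevant copy of $J$, and reassembling along $J$ and $\eta(J)$ exhibits $M$ as their connected sum, the decomposition of Figure~\ref{fig:figure5}. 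Alternatively one argues entirely in $H_{1}(M;\mathbb{Z})$, using $\eta_{*}=-1$ to see that the class carried by $\Gamma$ separates $M$ into two genus-one pieces, then tracks the flat structure on each side.

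\emph{Main obstacle.} The crux is the middle step: producing a saddle connection $J$ with $\eta(J)\neq J$ whose union with $\eta(J)$ is disjoint off $\Sigma$, i.e.\ ruling out the degenerate configurations in which every short saddle connection is pinned to a Weierstrass point by $\eta$ so that cutting fails to separate $M$, and doing so uniformly over the two strata. Once a suitable $J$ is in hand the decomposition itself is essentially bookkeeping — Euler-characteristic and cone-angle accounting, or the branched-cover picture — and $\mathcal{H}(2)$ and $\mathcal{H}(1,1)$ go through in parallel.
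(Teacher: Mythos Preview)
The paper does not prove this theorem; it is quoted as a background result attributed to McMullen (via \cite{bouwman}) and used without argument. So there is no in-paper proof to compare against.

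On the merits of your sketch: the overall architecture is the right one and matches McMullen's actual argument --- use $\eta^{*}\omega=-\omega$ to make $\eta$ a direction-reversing isometry, find a saddle connection $J$ with $\eta(J)\neq J$ and $J\cap\eta(J)\subset\Sigma$, and read off the genus-one pieces from the branched double cover $M\to M/\eta\cong\mathbb{P}^{1}$. Your analysis of which cone points are Weierstrass (fixed in $\mathcal{H}(2)$, swapped in $\mathcal{H}(1,1)$) is correct.

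Two remarks. First, the surgery/minimality argument for disjointness is unnecessary and slightly off-target: since the holonomy of $\eta(J)$ is $-\!\int_{J}\omega$, the segments $J$ and $\eta(J)$ are \emph{parallel}, so they cannot cross transversally at all; if they meet at a regular point they coincide on a subinterval and hence (having no interior cone points and equal length) coincide globally. Thus $J\cap\eta(J)\subset\Sigma$ is automatic once $\eta(J)\neq J$, with no appeal to minimal length.

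Second, the gap you flag is genuine and is exactly where the content of McMullen's theorem lives. Your proposed fixes (``not all short ones can be pinned'', ``pass to a second holonomy direction'') are not arguments; one really must produce a $J$ with $\eta(J)\neq J$. In $\mathcal{H}(1,1)$ a clean route is to take any saddle connection that is a loop at one cone point: then $\eta(J)$ is a loop at the \emph{other} cone point, so $\eta(J)\neq J$ for free. In $\mathcal{H}(2)$ every saddle connection is a loop at the fixed cone point and $\eta(J)=J$ forces the midpoint to be one of the five remaining Weierstrass points; one then argues that not every saddle connection can pass through a Weierstrass midpoint (for instance, a generic direction contains infinitely many saddle connections but only finitely many Weierstrass points). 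Until that step is made precise, the proposal is an outline rather than a proof.
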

\begin{figure}[ht!]
    \centering
    \includegraphics[width=8cm]{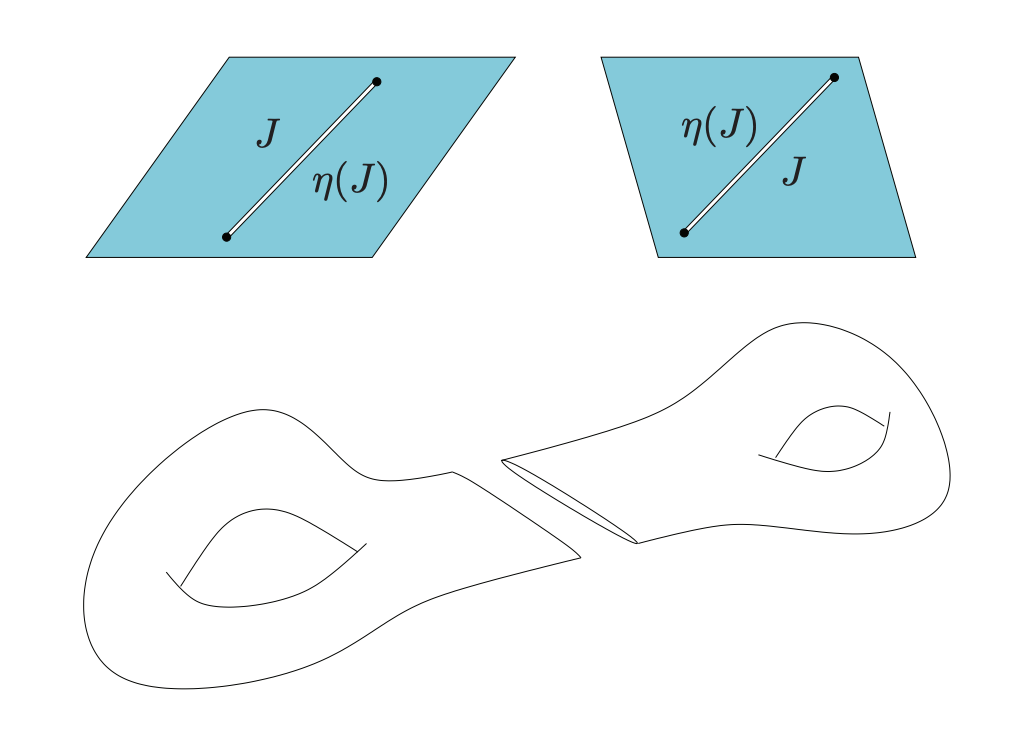}
    \caption{The surface depicted above is an arbitrary genus $2$ surface that has been expressed as the connected sum of two slit tori. It belongs in the $\mathcal{H}(1, 1)$ stratum. In order to express an $\mathcal{H}(2)$ surface as per the theorem, one of the slits needs to start and end at the same point of the torus. This image is taken from \cite{bouwman}.}
    \label{fig:figure5}
\end{figure}
It follows that any surface in $\mathcal{H}(1, 1)$ can be thought of as a doubled slit torus (i.e. the connected sum of two slit tori) in terms of a unique hyperelliptic involution. There is also another way of thinking about $\mathcal{H}(2)$ surfaces (from \cite{wright}).
\begin{prop}
Every genus $2$ translation surface in $\mathcal{H}(2)$ can be decomposed into a cylinder and a single slit torus as per a saddle connection $J \neq \eta(J)$.
\end{prop}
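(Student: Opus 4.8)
The plan is to obtain the statement as the $\mathcal{H}(2)$ specialization of McMullen's theorem above, the point being that when the surface carries a single cone point one of the two ``slit tori'' appearing in that decomposition degenerates into a flat cylinder.

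First I would pin down the cone point relative to the involution. A surface $M \in \mathcal{H}(2)$ has a unique cone point $p$, of angle $6\pi$, and since every automorphism of the translation structure --- in particular the hyperelliptic involution $\eta$ --- permutes the cone points, we get $\eta(p) = p$; consequently every saddle connection of $M$, and in particular the $J$ furnished by McMullen's theorem together with its image $\eta(J)$, is a loop based at $p$. Next I would invoke McMullen's theorem to cut $M$ along $J \cup \eta(J)$, presenting it as a connected sum of two slit tori $T_1$ and $T_2$, and use the additional information recorded in the discussion of that theorem: for a surface in $\mathcal{H}(2)$ one of the two slits, say the one on $T_1$, begins and ends at a single point $c$ of its torus.

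The crux is then to identify $T_1$ as a cylinder. The slit on $T_1$ is a geodesic loop from $c$ to $c$; it cannot bound a disk (no geodesic loop on a translation surface does, by Gauss--Bonnet), and once self-intersections are ruled out it is an embedded closed curve in the flat torus $T_1$. Cutting a flat torus along an embedded closed geodesic loop yields a flat cylinder, so $T_1$ is a cylinder $C$ carrying one marked point (a copy of $c$) on each of its two boundary circles. Re-gluing $C$ and the genuine slit torus $T_2$ along $J$ and $\eta(J)$ recovers $M$, which is thereby exhibited as a cylinder together with a single slit torus, decomposed as per the saddle connection $J \neq \eta(J)$. A consistency check of the cone angle at $p$ --- the two slit endpoints of $T_2$ and the two copies of $c$ on $\partial C$ all project to $p$, and their local angles should sum to $6\pi$ --- then confirms that the reassembled surface indeed lies in $\mathcal{H}(2)$.

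The step I expect to be the main obstacle is precisely this topological and metric bookkeeping: one must verify that $J$ and $\eta(J)$ meet only at $p$, so that the cut locus is as simple as claimed; that the degenerate slit is a genuinely embedded simple closed geodesic, so that cutting along it produces an honest metric cylinder rather than a surface of higher complexity; and that the gluing data and cone-angle count are mutually consistent --- in particular, that the ``two slit tori'' description for $\mathcal{H}(2)$ is correctly read as ``one slit torus plus one cylinder'' and not as two cylinders or two genuine slit tori. A self-contained alternative that avoids McMullen's theorem would be to construct the cylinder directly from an explicit polygonal presentation of $M$ (for surfaces in $\mathcal{H}(2)$ an ``L-shaped'' model is often available) and then recognize the complementary region as a slit torus; this trades the appeal to a deep theorem for a more hands-on but also more delicate analysis of which polygonal models occur, so I would turn to it only if the route above proved unwieldy.
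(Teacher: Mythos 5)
The paper never proves this proposition: it is imported as background with a citation, immediately after the statement of McMullen's splitting theorem, so there is no in-paper argument to compare yours against. Taken on its own, your derivation is correct in outline and is a reasonable way to get the statement out of the material the paper does quote: the unique zero $p$ of a surface in $\mathcal{H}(2)$ is fixed by $\eta$ (because $\eta^*\omega=-\omega$ preserves the zero set --- note $\eta$ is not a translation automorphism but acts with derivative $-I$; it is still an isometry carrying saddle connections to saddle connections), so $J$ and $\eta(J)$ are geodesic loops at $p$, the degenerate ``slit'' is a closed geodesic loop on its torus, cutting along it gives a flat cylinder, and regluing the cylinder's two boundary circles to the two sides of the genuine slit, with the marked boundary points identified with the slit endpoints, reproduces the cone angle $2\pi+2\pi+\pi+\pi=6\pi$ at $p$. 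The two delicate points you flag are real but fillable: $J\cap\eta(J)=\{p\}$ is part of the splitting data already presupposed by the quoted theorem, and the loop slit is embedded because its interior avoids the marked point (the interior of a saddle connection contains no zero), so its holonomy vector is a primitive lattice vector and the corresponding closed geodesic on a flat torus is simple. The one structural caveat is a mild circularity: the figure-caption remark that for $\mathcal{H}(2)$ ``one slit starts and ends at the same point'' is essentially the degeneration you are trying to establish, so a self-contained argument should derive it rather than assume it --- for instance, cut along $J\cup\eta(J)$ and count: the cut surface has Euler characteristic $-1$ and two components whose boundaries are built from the four edge copies, and since no component can be a disk (Gauss--Bonnet) the only possibility is a flat cylinder together with a genus-one piece with one boundary circle, i.e.\ a slit torus. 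With that count inserted, your route gives an actual proof where the paper offers only a citation; your alternative via an explicit L-shaped polygon would also work but proves less in exchange for more case analysis.
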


\section{Doubled Slit Torus}\label{dt}
Any translation surface in the $\mathcal{H}(1, 1)$ stratum can be expressed as a doubled slit torus (i.e. the connected sum of two slit tori) in terms of a unique hyperelliptic involution. The following proposition gives a way to characterize all possible circle configurations on a doubled slit torus.
\begin{prop}
Every circle configuration of an $\mathcal{H}(1, 1)$ doubled slit torus can be thought of as a pair $(P, Q)$ of collections of generalized circles on the torus (equivalently, doubly periodic circle configurations on the plane) such that for every generalized circle $C$ in $P$ and $Q$, the slit either intersects $C$ in two distinct points, passes through the center of $C$, or does not intersect $C$ at all.
\end{prop}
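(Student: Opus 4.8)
The plan is to exploit McMullen's theorem, which presents $M$ as a doubled slit torus. Cutting $M$ along the separating curve $\gamma := J \cup \eta(J)$ yields two slit tori, and re-closing each slit produces flat tori $\bar{X}_1 = \mathbb C/\Lambda_1$ and $\bar{X}_2 = \mathbb C/\Lambda_2$, each carrying a distinguished geodesic segment $\ell_i$ (the slit) whose two endpoints are the cone points of $M$. One then has $M \setminus \gamma = (\bar{X}_1 \setminus \ell_1) \sqcup (\bar{X}_2 \setminus \ell_2)$, and lifting to universal covers turns any circle collection on $\bar X_i$ into a doubly periodic collection in $\mathbb C$; so it suffices to produce the collections $P$ on $\bar X_1$ and $Q$ on $\bar X_2$ together with the asserted trichotomy.

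First I would sort the generalized circles of a given configuration $\mathcal C$ on $M$ by their position relative to $\gamma$. A $k$-circle $D$ with center $c$ and radius $r$ either (i) lies in the interior of one of the two slit tori, or (ii) meets $\gamma$. In case (i), $D$ is already a generalized circle on the relevant $\bar X_i$ disjoint from $\ell_i$; assign it to $P$ or $Q$ accordingly. This accounts for the third alternative of the trichotomy.

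The substance is case (ii). Here I would complete $D$ to a metric ball on the torus: if $c$ lies on the $\bar X_1$ side, set $C := B_{\bar X_1}(c,r)$, the ball computed in the flat torus $\bar X_1$ rather than in $M$. Because $\bar X_1$ is flat with no cone points, $C$ is a bona fide generalized circle with the same center and radius, and I would add it to $P$. Since $\ell_1$ is a geodesic segment and $C$ is geodesically convex, $\ell_1 \cap C$ is a connected subsegment of $\ell_1$, so $\partial C$ meets $\ell_1$ in at most two points; when $D$ genuinely crosses $\gamma$ this subsegment is nondegenerate with both ends interior to $\ell_1$, giving the ``two distinct points'' alternative, and when $c \in \gamma$ one obtains exactly the ``passes through the center'' alternative. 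The remaining possibility — a slit endpoint lying in the interior of $C$ — I would exclude using the $\mathcal H(1,1)$ hypothesis: such an endpoint is an order-one cone point of $M$ and so cannot lie in the interior of a $1$-circle, while a genuine $k$-circle with $k \ge 2$ engulfs a cone point only by wrapping around it, and cutting such a $D$ along $\gamma$ decomposes it into ordinary flat pieces in the two tori, to which the preceding analysis applies. Performing this for every circle of $\mathcal C$ produces $P$ and $Q$, and gluing $\bar X_1$ and $\bar X_2$ back along $\ell_1 = \ell_2$ recovers $\mathcal C$.

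The main obstacle is precisely this last analysis: checking that the completion $C$ is a legitimate generalized circle — in particular that the center and radius are unaffected by the passage between the metric of $M$ and that of $\bar X_i$, since a short path in $\bar X_i$ may cross $\ell_i$ freely whereas the corresponding path in $M$ must detour through the other slit torus — and checking that the trichotomy is genuinely exhaustive, i.e.\ that no completed configuration circle contains a slit endpoint in its interior or sits in a degenerate position against the slit. This is where the cone-point structure of the $\mathcal H(1,1)$ stratum and the axioms of Definition~\ref{def:tscp}, rather than the mere topological decomposition, must do the work.
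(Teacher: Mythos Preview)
Your overall architecture matches the paper's: sort circles into those disjoint from the slit (assigned directly to $P$ or $Q$) and those meeting it, and then argue that the latter must fall into one of the two remaining alternatives. The difference is that the paper does not ``complete'' the circle to a torus ball and invoke convexity; it argues directly from the metric axiom in Definition~\ref{def:tscp}.

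The gap is exactly the step you flag as the main obstacle, and your proposed resolution does not close it. The assertion that an order-one cone point ``cannot lie in the interior of a $1$-circle'' is not a fact you can cite --- it is precisely the content of the proposition. Likewise, the geodesic-convexity argument only handles the situation where the slit crosses $C$ with both endpoints outside; it says nothing about a slit \emph{endpoint} landing in the interior, which is the case that must be ruled out.

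The paper's argument for that case is short and you should supply it. Suppose a slit endpoint lies in the interior of $C$ and the center $A$ does \emph{not} lie on the slit. Pick a boundary point $B$ with $\overline{AB}$ avoiding the slit, so $d(A,B)=r$ in the quotient metric. Now take any point $X$ on the portion of the slit inside $C$ and extend $\overline{AX}$ to meet $\partial C$ at $Y$. In the quotient metric of $M$ one has $d(A,Y)>r$, because any path from $A$ to $Y$ must go around the slit rather than through it. Hence $C$ cannot be a single circle respecting the metric; to satisfy the fourth condition of Definition~\ref{def:tscp} it would have to be a double circle with a copy on each slit torus. But the second copy then has a center $A'\neq A$ (since $A$ is off the slit), contradicting the uniqueness of the center. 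This forces either the center onto the slit or the slit entirely through $C$, which is the trichotomy.
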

\begin{remark}
The center of $C$ is guaranteed to exist as a point by the fourth condition in Definition \ref{def:tscp}. The conditions on the slit also arise from this fourth condition, which requires that the circle respect the metric of the surface.
\end{remark}
\begin{proof}
    Every generalized circle configuration on a doubled slit torus consists of flat circles, whose interior and boundary do not contain any part of the slit, and non-flat circles. The flat circles must be fully contained on one of the slit tori that the doubled slit torus is made out of, and so belongs to either collection $P$ or $Q$ depending on which tori it lies on.
    \newline
    \indent Now, consider an arbitrary non-flat circle $C$. By the fourth condition of Definition \ref{def:tscp}, the center of $C$ must necessarily exist, and the circle must respect the Euclidean metric of the surface, as defined in Definition \ref{Euclid}. Suppose, for the sake of contradiction, that the slit did not pass through the center of $C$ and at least one end of the slit lies in the interior of $C$. Then the center of $C$ must lie in exactly one of the slit tori, and so belongs either in collection $P$ or collection $Q$.
    \newline
    \indent The idea is that $C$ must be a double circle, with one copy on each slit torus. Suppose, for the sake of contradiction that there was only a single copy of $C$ with radius $r$ on one of the slit tori. Let $A$ be the center of $C$. Then there exists a point $B$ on $C$ such that $AB$ does not intersect the slit and $d(A, B) = r$. Now, consider any point $X$ on the portion of the slit contained in the interior of $C$. Connecting the center to $X$ and extending, one can see that it will intersect at a point $Y$ on the boundary of $C$, in the first slit tori but not the second. However, $d(A, Y) > r = d(A, B)$, as one must go around the slit and cannot pass directly through it (since $C$ is a single circle). Therefore, $C$ does not respect the Euclidean metric as desired.
    \newline
    \indent Since the second copy of $C$ on the other slit torus must have a distinct center $A' \neq A$ (since the center $A$ does not lie on the slit), there is a contradiction since Definition \ref{def:tscp} requires there to be exactly one center. Therefore, either the center of $C$ must pass through the slit or the slit must pass entirely through $C$ (see figure \ref{fig:figure7} for an example of the latter case). This completes the proof of the proposition.
\end{proof}
\begin{example}
This gives a way to generate possible circle configurations, and check whether or not the contacts graph forms a triangulation. For example, start with the circle configuration $T$ of the torus, as shown in Figure \ref{fig:figure6}. 
\begin{figure}[H]
    \centering
    \includegraphics[width=4cm]{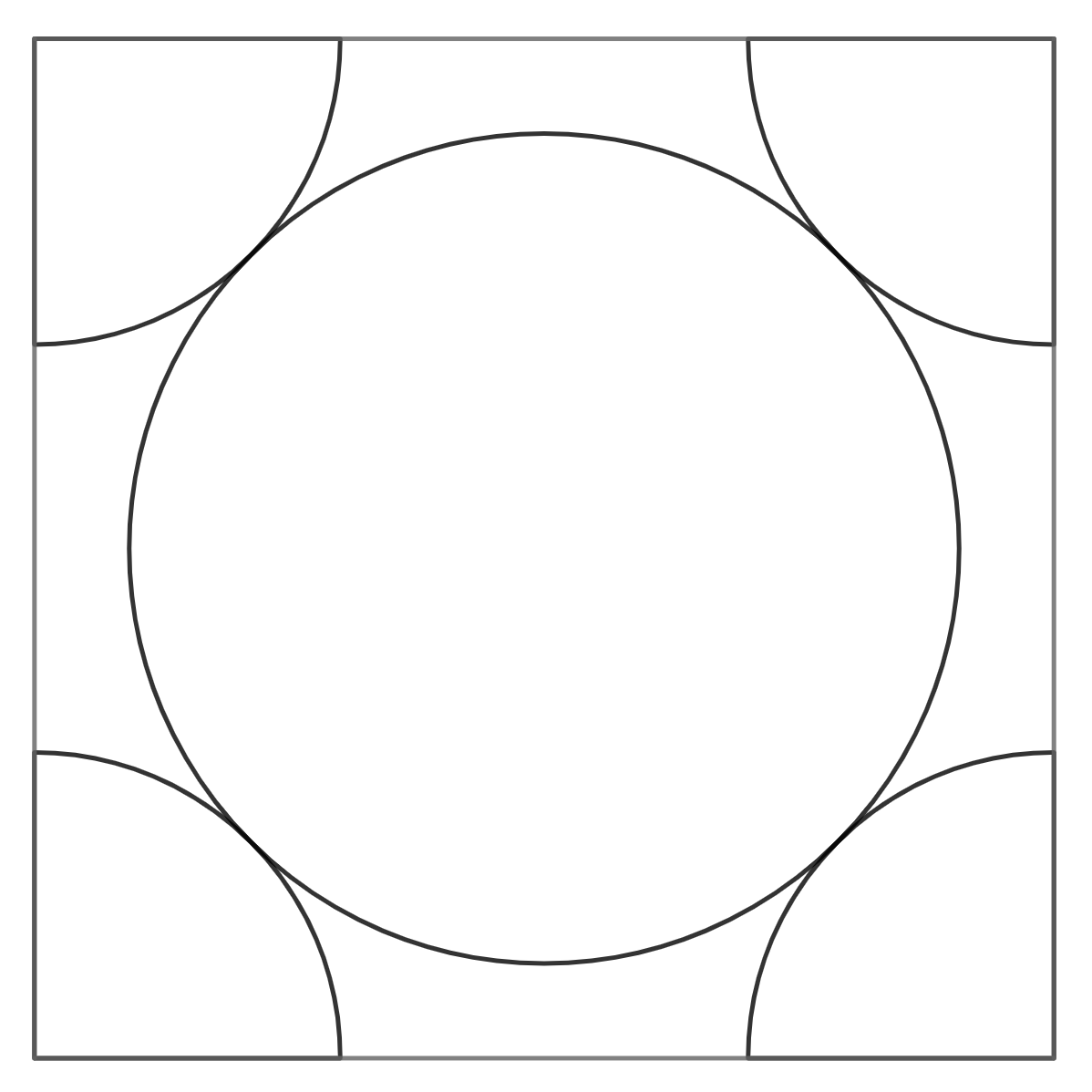}
    \caption{The univalent circle configuration above consists of two circles, and its contacts graph is simply $K_2$.}
    \label{fig:figure6}
\end{figure}
Set $P = Q = T$. The two singularities are constrained by the conditions on the slit. Figures \ref{fig:figure7}-\ref{fig:figure11} illustrate some of the possible locations of the slit, and the resulting circle configuration. Here, regions with the same color comprise a single generalized circle.
\end{example}
\begin{figure}[H]
    \centering
    \includegraphics[width=10cm]{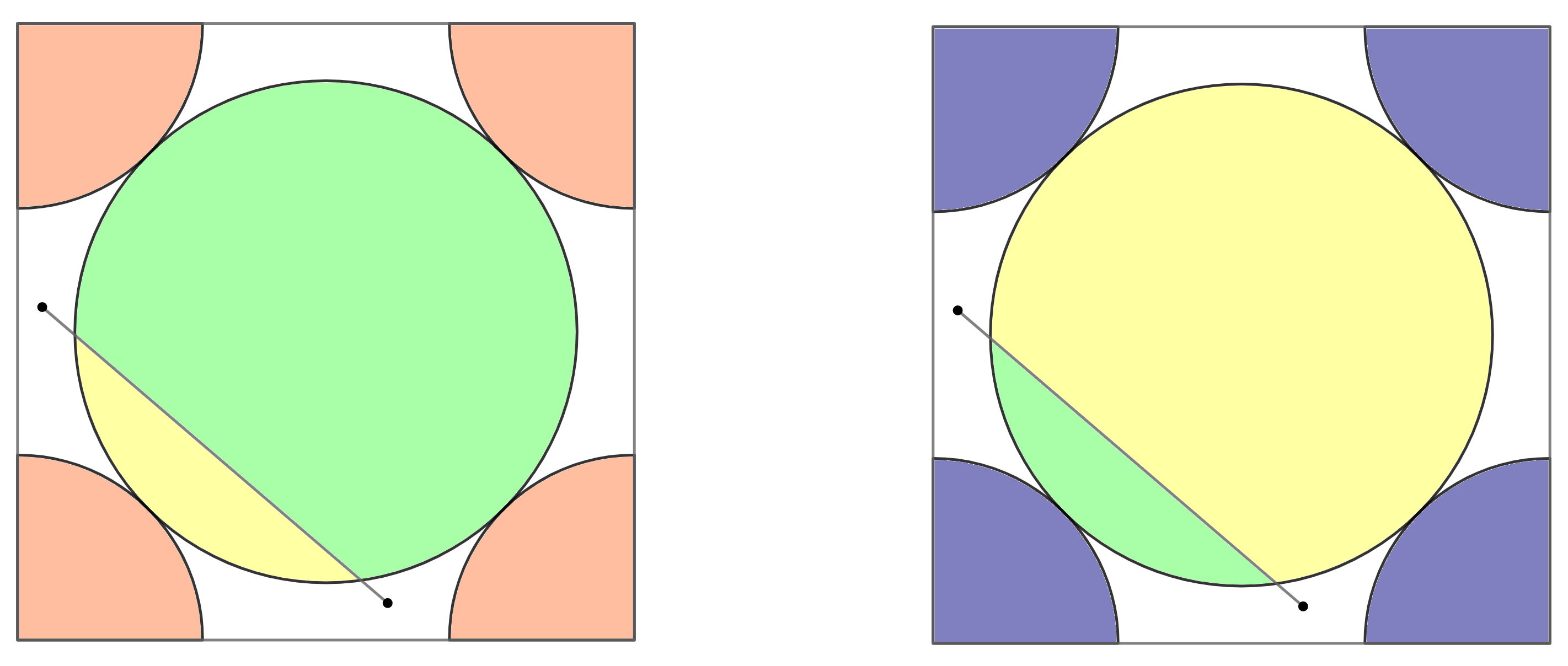}
    \caption{Both the yellow and the green circles are tangent to the orange and purple circles. The slit can be moved without changing the configuration as long as both singularities lie outside the circles and the slit intersects a circle. If the slit does not intersect any circle, the resulting contacts graph will not be connected.}
    \label{fig:figure7}
\end{figure}
\begin{figure}[H]
    \centering
    \includegraphics[width=10cm]{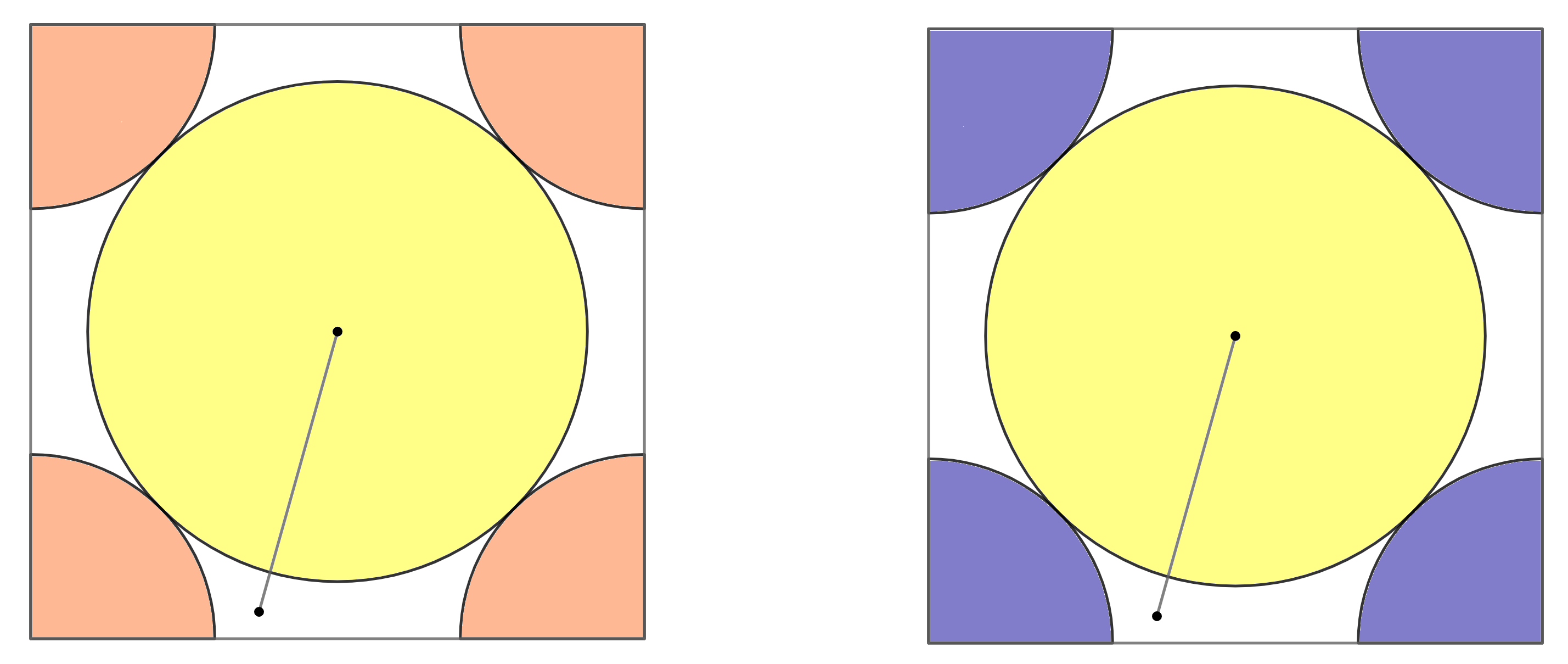}
    \caption{The yellow $2$-circle is tangent to the orange and purple circles. It is not required that the center be a singularity, as illustrated in the diagram. Rather, if a slit does not pass through a circle entirely, it is sufficient for the slit to pass through the center.}
    \label{fig:figure8}
\end{figure}
\begin{figure}[H]
    \centering
    \includegraphics[width=10cm]{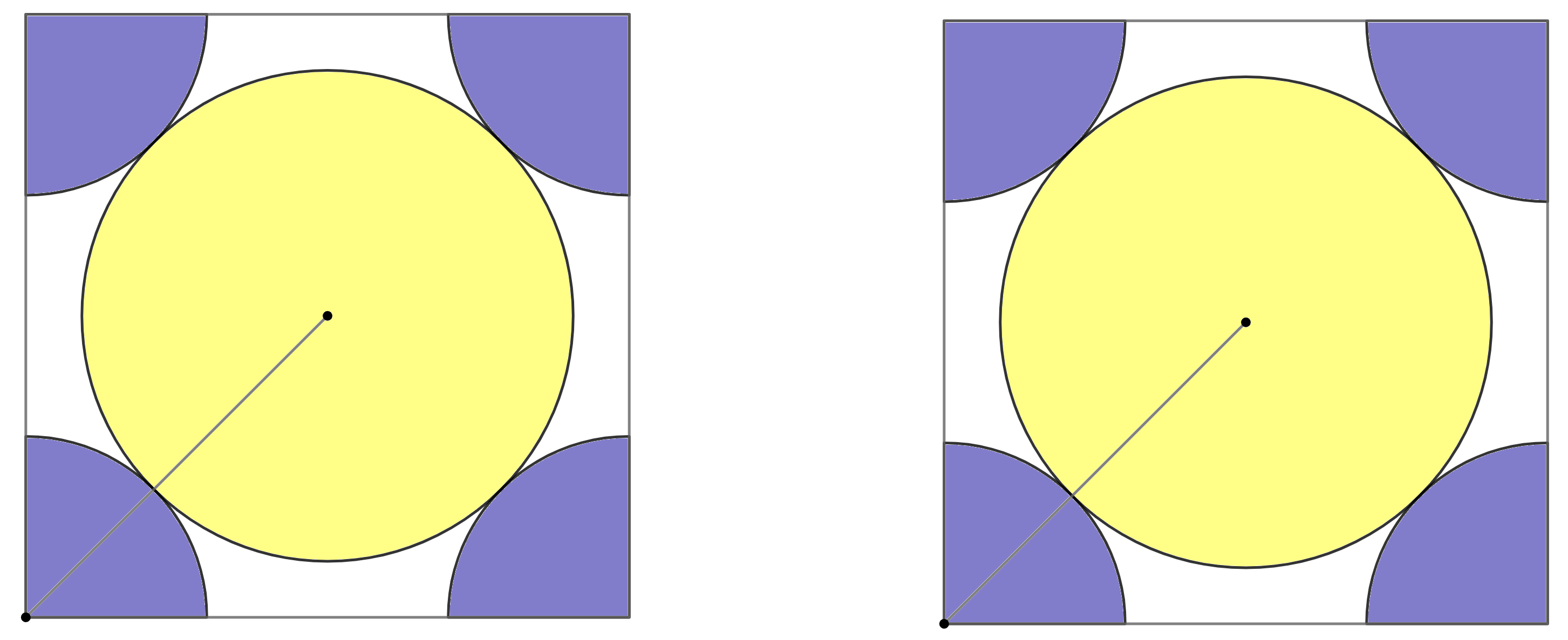}
    \caption{The yellow $2$-circle is tangent to the purple $2$-circle. In this case, one singularity must lie at one of the four corners because the slit must be fully contained in the square and pass through the center of the purple circle. The other singularity can be anywhere along the diagonal inside the yellow circle past the center.}
    \label{fig:figure9}
\end{figure}
\begin{figure}[H]
    \centering
    \includegraphics[width=10cm]{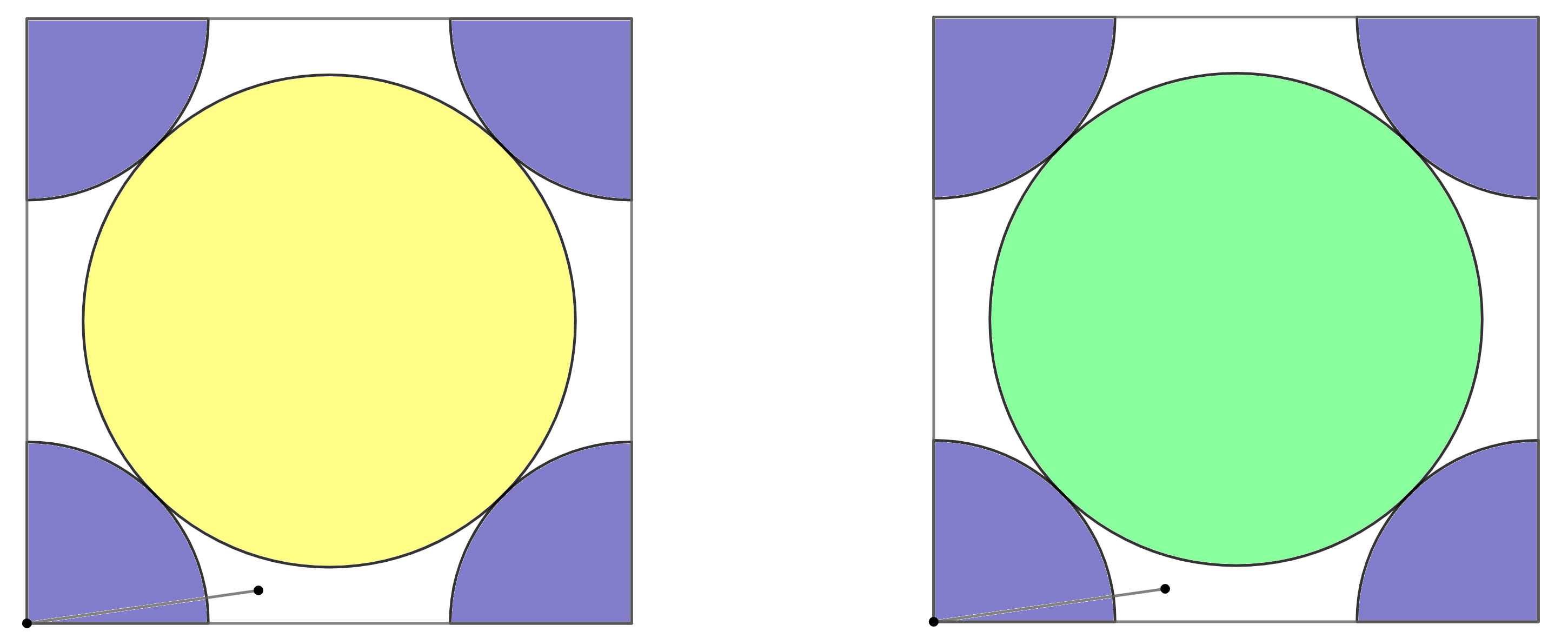}
    \caption{The yellow and green circles are both tangent to the purple $2$-circle. One of the singularity lies at the corner of the square because the slit must pass through the center of the purple circle. In this configuration, the other singularity must lie in one of the two adjacent interstice regions, as shown above.}
    \label{fig:figure10}
\end{figure}
\begin{figure}[H]
    \centering
    \includegraphics[width=10cm]{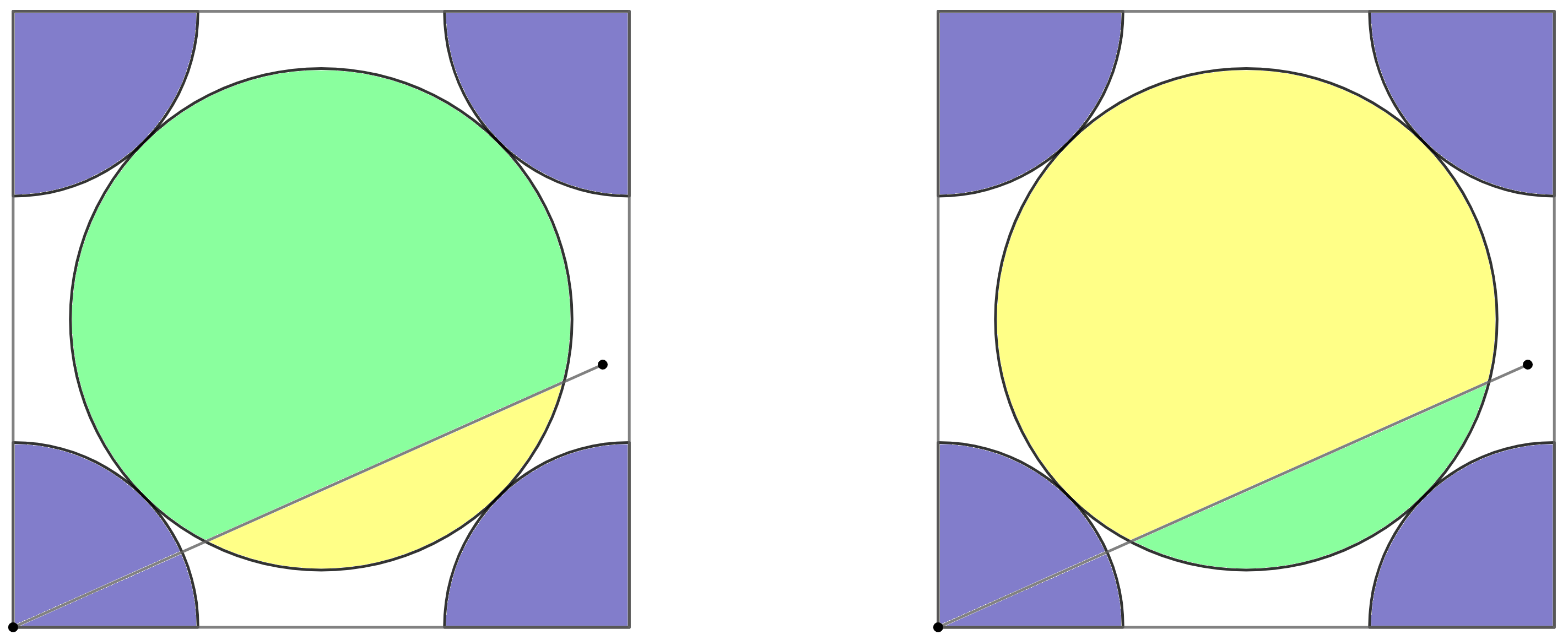}
    \caption{The yellow and green circles are both tangent to the purple $2$-circle. One of the singularity lies at the corner of the square because the slit must pass through the center of the purple circle. In this configuration, the other singularity must lie in one of the two opposite interstice regions, as shown above.}
    \label{fig:figure11}
\end{figure}
The following result also holds.
\begin{prop}\label{triangprop}
Suppose there exists a pair $(P, Q)$ of collections of generalized circles on the torus (equivalently, doubly periodic circle configurations on the plane) such that the contacts graphs of both $P$ and $Q$ form a triangulation on each torus. Furthermore, suppose that both ends of the slit are located at the center of a generalized circle and that every point along the slit either lies on the boundary of or is enclosed inside of a generalized circle. Then the contacts graph of the circle configuration $(P, Q)$ is a triangulation of the $\mathcal{H}(1, 1)$ doubled slit torus.
\end{prop}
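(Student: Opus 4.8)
The plan is to realize the doubled slit torus $X$ as $X_P\cup X_Q$, where $X_P$ (resp.\ $X_Q$) is the slit torus obtained by cutting the first (resp.\ second) torus open along the slit $J$, and to show that this cut does not disturb the two-cells of either triangulation at all. I take for granted here that the slit hypotheses already force $P$ and $Q$ to glue along $J$ into a genuine configuration of generalized circles on $X$ in the sense of Definition~\ref{def:tscp} — every disk meeting the slit closes up into an honest metric circle, for the reason seen in the proof of the preceding proposition — so the remaining content is to show that the combined contacts graph $G$ triangulates $X$; write $G_P$ and $G_Q$ for the contacts graphs of $P$ and $Q$.

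The crucial step is to prove that, under the hypotheses, $J$ lies inside the one-skeleton of $G_P$: in fact $J$ is an edge path in $G_P$ from the circle $C_1$ whose centre is one endpoint of $J$ to the circle $C_2$ whose centre is the other. Starting at $p_1=c_{C_1}$ and using that every point of $J$ is covered by a generalized circle together with the finiteness of the configuration, $J$ passes in turn through the interiors of circles $C_1=D_0, D_1,\dots, D_n=C_2$, leaving $D_{i-1}$ and entering $D_i$ at a single point $z_i\in\partial D_{i-1}\cap\partial D_i$. As the interiors are disjoint, $D_{i-1}$ and $D_i$ are externally tangent at $z_i$, so $c_{D_{i-1}}$, $z_i$ and $c_{D_i}$ are collinear; since $J$ is a geodesic segment for the flat metric of Definition~\ref{Euclid} through $c_{D_0}$ and $z_1$, it must then also pass through $c_{D_1}$, and inductively through every $c_{D_i}$ and every $z_i$. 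Thus the successive subsegments $c_{D_{i-1}}\,z_i\,c_{D_i}$ are exactly the edges of $G_P$ joining consecutive $D_i$ (in the straight-line realization of the contacts graph), i.e.\ $J$ is the edge path $D_0 D_1\cdots D_n$. Because every circle met by $J$ is non-flat, the characterization of circle configurations on doubled slit tori established above shows each $D_i$ is a double circle present in both $P$ and $Q$, so the analogous edge path in $G_Q$ runs through the same circles tangent at the same points $z_i$ of the shared slit.

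Granting this, the proof finishes quickly. Since $J$ lies in the one-skeleton of $G_P$, every open face of $G_P$ is disjoint from $J$, so cutting the first torus along $J$ leaves every open face untouched and only duplicates the edges of $J$ together with the vertices on them; the same holds for $G_Q$. After regluing $X_P$ to $X_Q$ along the opened slit, the seam is contained in $G$, so $X\setminus G$ is precisely the disjoint union of the open faces of $G_P$ and of $G_Q$, each of which is a topological triangle because $G_P$ and $G_Q$ are triangulations; local finiteness is automatic since the configuration is finite. Hence $G$ is a triangulation of $X$. The edges of $J$, duplicated by the cut and reglued across the seam, are the \emph{splitting bigons}: when $n=1$ the lone edge $(C_1,C_2)$ of $J$ becomes two distinct edges of $G$ joining $C_1$ and $C_2$, a bigon in the sense of Definition~\ref{bigondef}, and in general one obtains two edge paths from $C_1$ to $C_2$ passing through the pairs of copies into which the intermediate circles $D_1,\dots,D_{n-1}$ split on $X$.

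The step I expect to be the real obstacle is the final bookkeeping: verifying the intersection conditions in the definition of a triangulation near the seam. Away from the seam these are inherited from $G_P$ and $G_Q$. Near the seam one must describe precisely how the two cone points $p_1,p_2$ appear (each of total angle $4\pi$, hence of degree one, consistent with the stratum $\mathcal H(1,1)$) and how $D_1,\dots,D_{n-1}$ each split into two generalized circles on $X$, invoking the fact that the tangency data of the double circles agree along $J$; one also checks that two faces share at most one edge, which holds because no face of $G_P$ or $G_Q$ can have two of its sides on $J$ — the three centres of such a face would be collinear along the straight segment $J$ and the face would be degenerate. Carrying out this local analysis is exactly what the notion of splitting bigon is designed to organize, and is where the substantive work of the proof lies.
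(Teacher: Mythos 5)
Your structural picture matches the paper's: the heart of both arguments is that the slit is forced to run through the centres of a chain of consecutively tangent circles, so that the slit lies in the one-skeleton and every face not meeting it is already a face of $P$'s or $Q$'s triangulation; your geodesic-plus-collinearity induction in fact proves this chain claim in more detail than the paper, which essentially asserts it. Where the two diverge is at the end. The paper's proof spends its second half verifying the pairwise intersection conditions of a triangulation --- both faces in one slit torus; one face a bigon along the slit and the other a bigon or an ordinary triangle; two triangles from different slit tori meeting in an edge, in one vertex, or in the two vertices of a bigon --- and this is exactly the step you defer, writing that the bookkeeping near the seam ``is where the substantive work of the proof lies.'' So the proposal is incomplete precisely at the step that constitutes the bulk of the paper's proof. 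Your one remark in that direction (no face can have two sides on $J$, since its three centres would be collinear) addresses a non-issue rather than the cases that actually need checking, namely pairs of faces from the two different tori meeting along the seam and pairs involving the degenerate bigon faces between $C_1$ and $C_2$.

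A secondary point: your local model of the seam differs from the paper's. You have each intermediate circle $D_1,\dots,D_{n-1}$ splitting into two generalized circles on $X$ (consistent with Figure \ref{fig:figure7}), producing two edge paths from $C_1$ to $C_2$, whereas the paper treats each circle of the chain as a single double circle and counts $k-1$ bigons between consecutive ones. Your description is defensible, but it means the deferred intersection analysis would have to be carried out for your combinatorics, and nothing in the proposal does so. Also, like the paper, you assume rather than derive that the circles of $P$ and $Q$ meeting the slit match up into generalized circles on the doubled surface; the stated hypotheses alone do not force the two chains to have the same centres, tangency points and radii, so this should be flagged as an assumption implicit in the phrase ``the circle configuration $(P,Q)$'' rather than claimed to be forced by the preceding proposition.
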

\begin{remark}
The key idea behind the proof is that the conditions guarantee that all the interstice regions between the generalized circles are triangular and do not contain singularities.
\end{remark}
\begin{proof}
    It is first checked that all the faces of the contacts graph of $(P, Q)$ are triangles (or degenerate triangles). Clearly all the faces in just $P$ and all the faces in just $Q$ are triangles since $P$ and $Q$ each form a triangulation on their respective torus. Hence, it is necessary to check the faces at least one of whose vertices lies in both $P$ and $Q$ (namely, the slit). 
    \newline
    \indent First, let there be $k$ circles $C_1, C_2, \ldots, C_k$ such that for all $1 \leq i < k$, the double circle $C_i$ is externally tangent to the double circle $C_{i + 1}$. Then the slit goes from the center of $C_1$ to the center of $C_k$ and passes through the centers of each of the circles. This forms $k - 1$ bigons corresponding to the tangencies between the double circles of the from $C_i$ and $C_{i + 1}$. Now, consider the faces formed with at least one vertex on the slit and all the remaining (of which there are at least one) vertices on the same slit torus. Note that these faces are completely contained in a single slit torus, and thus must be contained in either the triangulation formed by $P$ or the triangulation formed by $Q$, as desired. 
    \newline
    \indent We now check the other condition for a triangulation, namely that any two triangles should intersect in an edge, a single vertex, two vertices, or not at all. If both triangles belong to the same slit torus, the result is clear since $P$ and $Q$ each triangulate their respective slit tori. Suppose that one of the triangular faces is one of the $k - 1$ bigons formed by $C_{i}$ and $C_{i + 1}$. Then the other face is either another of those bigons (in which case the intersection is either empty or a single vertex) or the other face is a triangle whose sides are fully contained in one of the slit tori (in which case the intersection is either an edge of the bigon, a single vertex of the bigon, or empty).
    \newline
    \indent Now, suppose that one triangle belongs to one slit tori and the other triangle belongs to the other slit tori, and none of them are one of the $k - 1$ bigons. Then either the triangles do not intersect at all, or they intersect in two points (namely the vertices of one of the $k - 1$ bigons), or they intersect in a single point (the vertex of one of the bigons), or they share an edge (that is a part of one of the bigons). This completes the proof of the proposition. 
\end{proof}
\begin{remark}
    A special case of the above result is when the slit connects the centers of two externally tangent double circles.
\end{remark}
This will be a useful result, as many of the circle packings that will be explored have a triangulation as defined by the previous proposition.
\section{Splitting Bigons}\label{tri}
The topological properties of bigons, as defined in Definition \ref{bigondef}, will be considered as follows.
\begin{definition}
    The \textit{associated loop} defined by a bigon between $V_1$ and $V_2$ can be determined by starting at $V_1$, traversing an edge to $V_2$, and then returning to $V_1$ via the other edge.
\end{definition}
\begin{remark}
The associated loop does not depend on whether one starts traversing at $V_1$ or at $V_2$.
\end{remark}
\begin{definition}
    A \textit{splitting bigon} is a bigon whose associated loop contains exactly two vertices in the triangulation and topologically disconnects the doubled slit torus into two disjoint slit tori.
\end{definition}
Consider the following lemma.
\begin{lem} \label{lemma1}
Let there be an arbitrary splitting bigon between two points $V_1$ and $V_2$ in the triangulation of a doubled slit torus. Then, after removing the vertices $V_1$ and $V_2$, the edges between them, and the loops containing just $V_1$ or just $V_2$, the resulting graph contains exactly two connected components.
\end{lem}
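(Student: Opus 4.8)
\emph{Proposal.} The plan is to exploit the separating simple closed curve supplied by the splitting bigon. Let $K$ be the given triangulation of the doubled slit torus $S$, let $e_1,e_2$ be the two edges of the splitting bigon joining $V_1$ to $V_2$, and let $\gamma = e_1\cup e_2$ be its associated loop. Two distinct edges of a triangulation meet only in shared vertices, so $\gamma$ is an embedded circle which is a subcomplex of $K$; by the definition of a splitting bigon the only vertices of $K$ lying on $\gamma$ are $V_1$ and $V_2$, and $\gamma$ separates $S$ into two closed subsurfaces $\overline{U}_1,\overline{U}_2$, each a slit torus, meeting exactly along $\gamma$. Since $\gamma$ is a subcomplex, every closed cell of $K$ lies in exactly one of $\overline{U}_1,\overline{U}_2$, so $K$ restricts to triangulations $K_1$ of $\overline{U}_1$ and $K_2$ of $\overline{U}_2$ whose only common cells are $V_1,V_2,e_1,e_2$.

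First I would establish the splitting into (at least) two pieces. Let $H$ be the graph obtained from the $1$-skeleton of $K$ by deleting $V_1,V_2$ and every edge incident to one of them; this removes $e_1,e_2$ and all loops at $V_1$ or $V_2$, as in the statement. Any cell of $K$ that meets the subcomplex $\gamma$ does so only in a common subcell, i.e. only in $\{V_1,V_2,e_1,e_2\}$. Hence every vertex of $H$ and every open edge of $H$ is disjoint from $\gamma$, so it lies in the open set $S\setminus\gamma = U_1\sqcup U_2$, and, being connected, entirely within one of $U_1,U_2$. Writing $H_i$ for the subgraph of $H$ contained in $U_i$, this yields $H = H_1\sqcup H_2$ with no edge of $H$ joining $H_1$ to $H_2$. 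The lemma therefore reduces to showing that each $H_i$ is connected and nonempty.

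For connectedness of $H_i$ I would use a path-tracing argument inside the slit torus $\overline{U}_i$. Given two vertices $u,w$ of $H_i$, choose a path $\rho$ from $u$ to $w$ lying in the open interior $U_i$ except at its endpoints, in general position with respect to the $1$-skeleton of $K_i$, meeting no vertex of $K_i$ other than $u,w$, and — crucially — chosen to cross no edge of $K_i$ both of whose endpoints lie in $\{V_1,V_2\}$ (such edges being $e_1$, $e_2$, any further chord from $V_1$ to $V_2$, and any loop at $V_1$ or $V_2$). Such a $\rho$ passes through a finite sequence of triangles $T_0,\dots,T_m$ of $K_i$ with $u\in \overline{T_0}$, $w\in \overline{T_m}$, and $T_j\cap T_{j+1}$ an edge that is crossed by $\rho$, hence has an endpoint distinct from $V_1,V_2$; that endpoint is a vertex of $H_i$ common to $T_j$ and $T_{j+1}$. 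Moreover, inside each $T_j$ the vertices distinct from $V_1,V_2$ are pairwise joined by edges of $T_j$, which are edges of $H_i$. Concatenating these local connections along $\rho$ produces a path in $H_i$ from $u$ to $w$, and nonemptiness is immediate once $\overline{U}_i$ carries a vertex other than $V_1,V_2$.

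The single point on which the argument genuinely leans is the existence of the path $\rho$ crossing no chord or loop on $\{V_1,V_2\}$, together with nonemptiness of the $H_i$: equivalently, that the restricted triangulation $K_i$ contains no bigon or loop that bounds a disk inside $\overline{U}_i$ and encloses a further vertex (such an enclosed vertex could be joined to the rest only through $V_1$ or $V_2$, and would split $H_i$). I expect this non-degeneracy to be the main obstacle, and would discharge it using the structure of the circle packings under consideration — for these packings each slit torus is filled in such a way that no inessential bigon or loop of the contacts graph encircles a circle other than the two bigon circles $C_{V_1},C_{V_2}$. Granting this, the argument above shows $H_1$ and $H_2$ are each connected and nonempty, so $H=H_1\sqcup H_2$ has exactly two connected components, as claimed.
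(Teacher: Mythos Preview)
Your approach is essentially the paper's: use the separating loop $\gamma$ of the splitting bigon to split the surface into two slit tori, observe that every remaining vertex and edge lies in one side, and then argue that each side stays connected. The paper compresses the last step into a single sentence --- if $A,B\in\Gamma_i$ then ``there exists a path along the triangulation from $A$ to $B$ that does not pass through either $V_1$ or $V_2$ (otherwise, the same splitting bigon topologically splits the slit torus $A$ and $B$ are on, which is impossible)'' --- whereas you spell out a triangle-by-triangle path-tracing argument and isolate the one hypothesis it needs.

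You are right to flag that hypothesis, and in fact you have put your finger on something the paper's parenthetical does not actually address. The obstruction you describe --- a loop at $V_1$ (or a further $V_1$--$V_2$ chord) inside $\Gamma_i$ bounding a disk that contains another vertex $w$ --- would not mean that the \emph{splitting bigon} separates $\Gamma_i$; it would mean that some \emph{other} cycle on $\{V_1,V_2\}$ does, and then $w$ becomes an isolated extra component after deletion. So the paper's one-line justification does not rule this out, and for fully general degenerate triangulations the lemma as stated can fail.

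Your proposed remedy, however, steps outside the lemma's hypotheses: you appeal to the structure of the circle packings under consideration, but the lemma is phrased for an arbitrary triangulation of the doubled slit torus with a splitting bigon, with no packing in sight. For the paper's purposes this is harmless --- the lemma is only ever applied to contacts graphs of the specific packings in \S\ref{main}, where the non-degeneracy you need does hold --- but if you want a self-contained proof of the lemma as stated you must either (i) add a mild non-degeneracy hypothesis on $T$ excluding inessential loops/bigons on $\{V_1,V_2\}$ that enclose further vertices, or (ii) give a purely topological argument that such configurations cannot occur, which in this generality they can. Option (i) is the honest fix and costs nothing downstream.
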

\begin{proof}
Consider two arbitrary vertices $A, B$ of the triangulation such that $A \neq V_1, V_2$ and $B \neq V_1, V_2$. Suppose that the doubled slit torus is divided by the splitting bigon into two different slit tori $\Gamma_1$ and $\Gamma_2$. Then $A, B$ are each part of either $\Gamma_1$ or $\Gamma_2$, since neither lie on the associated loop of the bigon. \newline
\indent
If $A, B \in \Gamma_1$ or $A, B \in \Gamma_2$, then there exists a path along the triangulation from $A$ to $B$ that does not pass through either $V_1$ or $V_2$ (otherwise, the same splitting bigon topologically splits the slit tori $A$ and $B$ are on, which is impossible). If $A$ and $B$ belong to different tori, then any path between them along the edges of the triangulation must contain a point on the bigon (namely, either $V_1$ or $V_2$). Thus, after removing $V_1, V_2$ and all loops and edges containing them, $A$ and $B$ become disconnected. This completes the proof of the lemma.
\end{proof}
Let $B_1$ and $B_2$ be two distinct splitting bigons that are both a part of $T$. Denote the two points in the splitting bigon $B_1$ as $V_1$ and $V_2$. A equivalence relation $\sim$ is set up with exactly two classes among all other vertices in the triangulation such that $X \sim Y$ if and only if there exists a path from $X$ to $Y$ not passing through $V_1$ or $V_2$. The two vertices belonging to the bigon $B_2$ must be part of the same equivalence class under $\sim$ by definition.
\newline
\indent
Consider the following second lemma.
\begin{lem} \label{lemma2}
Suppose that the doubled slit torus is topologically split by the associated loop of the splitting bigon $B_1$ into two disjoint slit tori $\Gamma_1$ and $\Gamma_2$. Every point on the associated loop of the other splitting bigon $B_2$ lies on the same slit torus $\Gamma_i$, where $i \in \{1, 2\}$.
\end{lem}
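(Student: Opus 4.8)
\section{Proof proposal for Lemma \ref{lemma2}}

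The plan is to show that the associated loop $L_2$ of $B_2$ cannot meet the separating loop $L_1$ of $B_1$ except possibly at a single vertex common to both bigons, and then to conclude by a connectedness argument. First I would fix notation: write $V_1,V_2$ for the two vertices of $B_1$ and $W_1,W_2$ for the two vertices of $B_2$ (each associated loop has exactly two vertices of the triangulation, by the definition of a splitting bigon), and regard $L_1$ and $L_2$ as subsets of the doubled slit torus. As a subset, $L_1$ is the union of $\{V_1,V_2\}$ with the interiors of its two edges, and likewise for $L_2$. Since $B_1\neq B_2$, Remark \ref{common} gives that $\{V_1,V_2\}\cap\{W_1,W_2\}$ has at most one element. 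By hypothesis the doubled slit torus is topologically split by $L_1$ into the two slit tori $\Gamma_1$ and $\Gamma_2$, so removing $L_1$ leaves exactly the two disjoint connected open pieces $\Gamma_1\setminus L_1$ and $\Gamma_2\setminus L_1$ (this is the topological content of Lemma \ref{lemma1}, with the two components being the interiors of $\Gamma_1$ and $\Gamma_2$).

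The key observation is combinatorial: in a triangulation two distinct edges intersect only at common endpoints, and the interior of an edge contains no vertex. Hence each of the two edges of $B_2$ meets $L_1$ only in the set $\{V_1,V_2\}\cap\{W_1,W_2\}$. I would then split into two cases according to the size of this intersection.

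If $B_1$ and $B_2$ share no vertex, then $L_2$ is disjoint from $L_1$, so $L_2$ is a connected subset of $(\Gamma_1\setminus L_1)\sqcup(\Gamma_2\setminus L_1)$ and therefore lies entirely in one $\Gamma_i$. If instead they share exactly one vertex, say $W_1=V_1$ and $W_2\notin\{V_1,V_2\}$, then $W_2$ is a vertex not on $L_1$, so it lies in exactly one component, say $\Gamma_1\setminus L_1$. Each edge $e$ of $B_2$ meets $L_1$ only at $V_1$, so $e\setminus\{V_1\}$ is connected, disjoint from $L_1$, and contains $W_2$; hence $e\setminus\{V_1\}\subseteq\Gamma_1\setminus L_1$, and therefore $e\subseteq\Gamma_1$. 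Since the shared vertex $V_1=W_1$ lies on the common boundary $L_1=\Gamma_1\cap\Gamma_2$, it belongs to $\Gamma_1$ as well, so every point of $L_2$ lies on $\Gamma_1$. In either case $L_2$ lies on a single $\Gamma_i$, which is the claim.

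The step I expect to be the main obstacle is not the case analysis but making the topology airtight: one needs that the complement of $L_1$ has exactly two connected components, with closures $\Gamma_1$ and $\Gamma_2$, and that the only conceivable way for $L_2$ to escape a single $\Gamma_i$ is for one of $B_2$'s edges to cross $L_1$ at an interior (non-vertex) point — precisely what the triangulation axioms in Definition \ref{bigondef} and the definition of a triangulation rule out. Once this is carefully justified, each case reduces to the elementary fact that a connected set meeting at most one point of a separating curve lies on one side of it.
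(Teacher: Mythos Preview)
Your proposal is correct and follows essentially the same approach as the paper: both split into the two cases of zero or one shared vertex (invoking Remark \ref{common}), and both hinge on the fact that distinct edges of the triangulation can meet only at common endpoints, so $L_2$ cannot cross $L_1$ at an interior point. Your connectedness formulation is a bit more streamlined than the paper's contradiction argument, but the underlying idea is identical.
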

\begin{proof}
There are two cases, depending on how many vertices the two splitting bigons $B_1$ and $B_2$ have in common. Note that by Remark \ref{common}, they either have one vertex in common or none at all.
\newline
\indent
\textbf{Case 1 (One common vertex).} The two vertices of $B_2$ are either of the form $V_1$ and $V_3$ or of the form $V_2$ and $V_3$, where $V_3 \neq V_1, V_2$.
\newline
\indent
Let $\Gamma_i$ be the slit torus (chosen out of $\Gamma_1, \Gamma_2$) containing $V_3$. Suppose for the sake of contradiction, that there was a point, not necessarily a triangulation vertex, $X \in B_2$ on $\Gamma_{3 - i}$ (the other slit torus). Then, any path along the associated loop of $B_2$ from $X$ to $V_3$ must have passed through a point on the associated loop of $B_1$. If it passed through $V_1$, there is a completed loop $V_1$ to $X$ to $V_1$ before $V_3$ is reached, and so it is considered a separate bigon between $V_1$ and $X$. If it passed through $V_2$, the bigon would contain three distinct points on the triangulation $(V_1, V_2, V_3)$, which contradicts the given. There can not be any other point of intersection $Y \neq V_1, V_2$ between $B_1$ and $B_2$ or else $Y$ would be a vertex of the triangulation $T$, which is a contradiction since both $B_1$ and $B_2$ contain exactly two vertices in $T$.
\newline
\indent
\textbf{Case 2 (No common vertex).} The two vertices defining $B_2$ are of the form $V_3$ and $V_4$ such that $\{V_1, V_2\}$ and $\{V_3, V_4\}$ are disjoint.
\newline
\indent
If $V_3$ and $V_4$ belonged to distinct tori, then $B_2$ and $B_1$ would necessarily intersect at some point $X$, which would belong to $T$. This is a contradiction since both $B_1$ and $B_2$ contain exactly two vertices in $T$. Thus, $V_3$ and $V_4$ must belong to the same slit torus. Call this slit torus $\Gamma_i$ where $i \in \{1, 2\}$. Suppose for the sake of contradiction, that there was a point $X \in B_2$ on $\Gamma_{3 - i}$ (the other torus). Then, any path from $X$ to $V_4$ would intersect $B_1$ at either $V_1$ or $V_2$. Without loss of generality, suppose it intersects at $V_1$. This is a contradiction since $B_2$ cannot contain three distinct vertices of $T$ (namely $V_3, V_4,$ and $V_1$). This completes the proof of the lemma.
\end{proof}
Consider the following result.
\begin{prop} \label{bigon}
Let $T$ be a triangulation of an $\mathcal{H}(1, 1)$ doubled slit torus. If $B_1$ and $B_2$ are two distinct splitting bigons that are both a part of $T$, then the associated loops of $B_1$ and $B_2$ are topologically equivalent.
\end{prop}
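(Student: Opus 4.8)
\smallskip
\noindent\textbf{Proof strategy.}
The plan is to prove the stronger statement that the associated loop $\gamma_2$ of $B_2$ is isotopic to the associated loop $\gamma_1$ of $B_1$, which in particular makes them topologically equivalent. The idea is to confine $\gamma_2$ to one of the two slit tori cut off by $\gamma_1$, and then to use the fact that a once-holed torus carries only one isotopy class of essential separating simple closed curve, namely the boundary-parallel one.

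First I would record two elementary facts. The associated loop of a bigon is an embedded circle, since it is the union of two distinct edges of the triangulation and these are embedded arcs meeting only at the two shared vertices. And, because $B_1$ is a splitting bigon, $\gamma_1$ cuts the doubled slit torus $M$ into two pieces $\Gamma_1$ and $\Gamma_2$, each homeomorphic to a once-holed torus, with $\partial\Gamma_1=\partial\Gamma_2=\gamma_1$. By Lemma \ref{lemma2}, every point of $\gamma_2$ lies on a single one of these, say $\Gamma_1$. If $\gamma_2$ actually meets $\gamma_1$ --- which, by Remark \ref{common}, can happen only at a common vertex of $B_1$ and $B_2$ --- then by Lemma \ref{lemma2} both edge-ends of $\gamma_2$ at that vertex enter the interior of $\Gamma_1$, so I may push that vertex slightly off $\gamma_1$ and replace $\gamma_2$ by an isotopic loop lying in $\operatorname{int}\Gamma_1$. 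Thus one may assume $\gamma_2\subset\operatorname{int}\Gamma_1$.

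Next I would observe that $\gamma_2$ is separating already inside $\Gamma_1$: if $\Gamma_1\setminus\gamma_2$ were connected, then $M\setminus\gamma_2=(\Gamma_1\setminus\gamma_2)\cup_{\gamma_1}\Gamma_2$ would also be connected, contradicting that $B_2$ is a splitting bigon. Cutting $\Gamma_1$ along $\gamma_2$ then yields two subsurfaces whose genera add to $1$ and whose Euler characteristics add to $\chi(\Gamma_1)=-1$; counting the forced boundary components and comparing Euler characteristics leaves only two possibilities: either $\gamma_2$ bounds a disk in $\Gamma_1$, or $\gamma_2$ cobounds an annulus with $\gamma_1=\partial\Gamma_1$. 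In the first case one complementary component of $\gamma_2$ in $M$ is a disk, which is not a slit torus, contradicting the definition of a splitting bigon. Hence the second case holds, $\gamma_2$ is isotopic to $\gamma_1$ inside $\Gamma_1$, and therefore $\gamma_1$ and $\gamma_2$ are isotopic in $M$, as required.

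The step I expect to be the main obstacle is the careful handling of Lemma \ref{lemma2} in the presence of a shared vertex --- in particular, justifying that $\gamma_2$ can be isotoped entirely off $\gamma_1$ rather than being forced to overlap it --- together with stating the classification of separating curves on the once-holed torus precisely enough to rule out every case except the boundary-parallel one. This classification is routine surface topology, but it is exactly where the hypothesis that each complementary piece is a genus-one slit torus is used; the analogous conclusion fails for arbitrary separating curves in a closed genus-two surface, so the confinement supplied by Lemma \ref{lemma2} is essential.
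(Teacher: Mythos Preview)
Your argument is correct and follows essentially the same route as the paper: confine $\gamma_2$ to one slit torus $\Gamma_i$ via Lemma~\ref{lemma2}, observe that $\gamma_2$ must separate $\Gamma_i$, and then use a genus/Euler-characteristic count to force the piece between $\gamma_1$ and $\gamma_2$ to have genus zero, yielding the equivalence. Your version is in fact somewhat more careful than the paper's---you explicitly handle the shared-vertex push-off and phrase the genus-zero conclusion as ``$\gamma_2$ cobounds an annulus with $\gamma_1$'' rather than the paper's looser appeal to a ``sphere with trivial fundamental group''---but the underlying idea is the same.
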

\begin{proof}
Divide the doubled slit torus along $B_1$ to form two slit tori $\Gamma_1$ and $\Gamma_2$, as before. By Lemma \ref{lemma2}, there exists a slit torus $\Gamma_i$ for $1 \leq i \leq 2$ such that every point on the associated loop of $B_2$ lies on $\Gamma_i$. This allows one to cut the surface $\Gamma_i$ along $B_2$. If $\Gamma_i$ did not disconnect, then $B_2$ is unable to disconnect the entire doubled slit torus as well, which contradicts the fact that $B_2$ is a splitting bigon. Therefore, cutting along $B_2$ disconnects $\Gamma_i$ into two surfaces $S_1$ and $S_2$ with disjoint interiors such that $S_1 \# S_2 = \Gamma_i$. Without loss of generality, denote the surface containing the points on the associated loop of $B_1$ to be $S_1$ and the other one to be $S_2$.
\newline
\indent
Then the genus of $S_2$ is necessarily $1$ as $B_2$ divides the double torus into two slit tori since it is a splitting bigon. Since the genus of $\Gamma_i$ is also $1$ and $S_1 \# S_2 = \Gamma_i$, it follows that the genus of $S_1$ is $0$. Thus, $S_1$ is homeomorphic to a sphere and has a trivial fundamental group. Since the associated loops of both $B_1$ and $B_2$ fully lie on $S_1$, the loops must be topologically equivalent. This completes the proof of the proposition.
\end{proof}
Color the two holes of a doubled slit torus red and blue in some arbitrary order. Consider the following lemma.
\begin{lem} \label{order}
    Suppose that there are $k$ distinct splitting bigons. Then, there exists a way to order them as $B_1, B_2, \ldots, B_k$ such that for all $1 \leq i \leq k$, $B_i$ divides the doubled slit torus into two slit tori $T_{i, 1}$ and $T_{i, 2}$ with disjoint interiors such that $T_{i, 1}$ contains the red hole, $T_{i, 2}$ contains the blue hole, and there exist $i - 1$ other splitting bigons whose associated loop is completely contained in $T_{i, 1}$.   
\end{lem}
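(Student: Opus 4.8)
The plan is to linearly order the splitting bigons according to how much of the surface lies on the ``red side'' of each. For a splitting bigon $B$, I will write $R(B)$ for the slit torus cut off by the associated loop of $B$ that contains the red hole, and $R'(B)$ for the complementary slit torus (the one containing the blue hole); these are well defined because, by definition, a splitting bigon disconnects the doubled slit torus into exactly two slit tori whose common boundary is the associated loop. The goal is to show that $B \mapsto R(B)$ is injective and that its image is totally ordered by inclusion. The enumeration $B_1, \dots, B_k$ is then obtained simply by listing the splitting bigons in order of increasing $R(B_i)$, and putting $T_{i,1} = R(B_i)$ and $T_{i,2} = R'(B_i)$.

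First I would check injectivity. If $R(B) = R(B')$ as subsets of the surface, then their boundaries coincide, so the associated loops of $B$ and $B'$ agree as point sets. Since a splitting bigon has exactly its two vertices lying on its associated loop, $B$ and $B'$ then have the same pair of vertices, and hence are not distinct by Remark \ref{common}. So $B \mapsto R(B)$ is injective, and the family $\{R(B)\}$ has exactly $k$ members.

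The main step is a nesting claim: for distinct splitting bigons $B, B'$, either $R(B) \subsetneq R(B')$ or $R(B') \subsetneq R(B)$. By Lemma \ref{lemma2}, the associated loop of $B'$ lies entirely in one of $R(B)$, $R'(B)$; assume it lies in $R(B)$ (the other case is symmetric, with the colours exchanged). Then I would cut the slit torus $R(B)$ along the associated loop of $B'$ and re-run the argument from the proof of Proposition \ref{bigon}: this loop must disconnect $R(B)$ (otherwise $B'$ would fail to disconnect the whole surface, contradicting that it is a splitting bigon), and the same genus bookkeeping shows that the two pieces are a genus-one slit torus containing the red hole — which must be exactly $R(B')$, since cutting the whole surface along the loop of $B'$ produces this piece together with the rest — and an annulus cobounded by the associated loops of $B$ and $B'$. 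This yields $R(B') \subsetneq R(B)$, and the symmetric case gives the reverse inclusion. Consequently the finite family $\{R(B)\}$ is totally ordered by inclusion, so we may label the splitting bigons $B_1,\dots,B_k$ with $R(B_1) \subsetneq \cdots \subsetneq R(B_k)$.

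Finally I would read off the conclusion. By construction $T_{i,1} = R(B_i)$ contains the red hole and $T_{i,2} = R'(B_i)$ contains the blue hole, and they have disjoint interiors and union the whole surface, split along $B_i$. A splitting bigon $B_j$ with $j \neq i$ has its associated loop completely contained in $T_{i,1}$ exactly when that loop lies on the red side of $B_i$, which by the nesting step is equivalent to $R(B_j) \subsetneq R(B_i)$, i.e. to $j < i$; the annulus separating the loops of $B_j$ and $B_i$ is nondegenerate (again by injectivity/Remark \ref{common}), so the loop of $B_j$ actually sits in the interior of $T_{i,1}$ and the containment is genuine. Hence there are precisely $i-1$ such other splitting bigons, as required. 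The step I expect to be the main obstacle is the nesting claim: carefully re-running the cut-and-genus-count argument of Proposition \ref{bigon} inside the slit torus $R(B)$ rather than the whole surface, and keeping track of which boundary circle bounds which piece; ruling out a degenerate (vertex-free, ``thin'') annulus between two distinct splitting bigons is the one delicate point, and it is exactly what Remark \ref{common} takes care of.
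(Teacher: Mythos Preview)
Your argument is correct and takes a genuinely different route from the paper's. The paper proves the lemma by an inductive construction: at each step it finds a ``next'' splitting bigon by an infinite-descent contradiction (if no bigon had exactly $i-1$ predecessors on its red side, one could produce an infinite sequence of distinct splitting bigons). You instead prove a structural fact first --- that the red-side regions $R(B)$ form a chain under strict inclusion --- and then simply read off the enumeration. Your nesting claim packages the content of Lemma~\ref{lemma2} together with the cut-and-genus-count from Proposition~\ref{bigon} into a single statement, from which both the ordering and the ``exactly $i-1$'' count fall out at once; the paper's induction recovers the same total order but leaves it implicit and has to argue the exact count at each stage. The trade-off is that your proof leans a bit more heavily on re-running the genus bookkeeping of Proposition~\ref{bigon} inside $R(B)$, whereas the paper's descent only needs Lemma~\ref{lemma2} and finiteness; on the other hand, your version makes the linear structure of the splitting bigons completely transparent, which is exactly what Proposition~\ref{split} and the later uniqueness arguments use.
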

\begin{proof}
    This result will be proven using induction on $i$. 
    \newline
    \indent
    \textbf{Base Case.} Suppose that $i = 1$. It will be shown that there exists a splitting bigon $B_1$ such that $T_{1, 1}$ does not contain the associated loop of any of the other splitting bigons. Suppose, for the sake of contradiction, that no such $B_1$ exists. Then, for all splitting bigons $X$ that split the doubled slit torus into $T_1$ and $T_2$ with $T_1$ containing the red hole, there exists some other splitting bigon $X_1$ whose associated loop lies completely in $T_1$. Similarly, applying this idea to $X_1$, it can be deduced that there must exist an $X_2$ whose associated loop is even closer to the red hole. Continuing this ad infinitum, there must exist an infinite sequence $X_1, X_2, \ldots$ of distinct splitting bigons, which contradicts the finiteness of the total number of splitting bigons.
    \newline
    \indent
    \textbf{Inductive Hypothesis.} Suppose that for some $1 \leq i \leq k - 1$, there exists a splitting bigon $B_i$ that divides the the doubled slit torus into two slit tori $T_{i, 1}$ and $T_{i, 2}$ such that $T_{i, 1}$ contains the red hole and there exist $i - 1$ other splitting bigons whose associated loop is completely contained in $T_{i, 1}$. 
    \newline
    \indent
    \textbf{Inductive Step.} It will be shown that there exists a splitting bigon $B_{i + 1}$ such that $T_{i + 1, 1}$ contains the associated loop of exactly $i$ splitting bigons. Furthermore, $i$ of these bigons can be identified through the inductive hypothesis, namely $B_1, B_2, \ldots, B_{i}$. Thus, it remains to show that the associated loop of no other splitting bigon is contained in $T_{i + 1, 1}$.
    \newline
    \indent
    Suppose, for the sake of contradiction, that no such $B_k$ exists. Then, for all splitting bigons $X$ that split the doubled slit torus into $T_1$ and $T_2$ with $T_1$ containing the red hole, there exists a splitting bigon $X_1 \neq B_j$ for any $1 \leq j \leq i$ whose associated loop lies completely in $T_1$. Similarly, applying this idea for $X_1$, it is deduced that there must exist an $X_2 \neq B_j$ for any $1 \leq j \leq i$ whose associated loop is even closer to the red hole. Continuing this ad infinitum, there must exist an infinite sequence $X_1, X_2, \ldots$ of distinct splitting bigons, which contradicts the finiteness of the total number of splitting bigons. This completes the inductive step and thus the proof of the lemma.
\end{proof}
This idea of ordering the bigons can be used as follows.
\begin{prop}\label{split}
    The splitting bigons $B_1, B_2, \ldots, B_k$, as per Lemma \ref{order}, divide the doubled slit torus into $k + 1$ surfaces with disjoint interiors, namely two slit tori $\Gamma_1'$, $\Gamma_2'$, and genus zero surfaces $S_i$ for $1 \leq i < k$ and $S_i$ such that $S_i$ is bounded by the bigons $B_i$ and $B_{i + 1}$ for all $i$.
\end{prop}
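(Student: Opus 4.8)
The plan is to read off the decomposition directly from the ordered splitting bigons $B_1, \dots, B_k$ of Lemma \ref{order}, using the nested family of ``blue halves'' as the skeleton. First I would record the nesting: for $1 \le i < j \le k$ the associated loop of $B_i$ lies in $T_{j,1}$ and the associated loop of $B_j$ lies in $T_{i,2}$. Indeed, the proof of Lemma \ref{order} identifies the $i-1$ splitting bigons whose loops lie in $T_{i,1}$ as being precisely $B_1, \dots, B_{i-1}$, so since $i \le j-1$ the loop of $B_i$ is one of the loops inside $T_{j,1}$; and since $j \notin \{1, \dots, i-1\}$, the loop of $B_j$ is not contained in $T_{i,1}$, hence lies in $T_{i,2}$ by Lemma \ref{lemma2}. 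Because each $T_{j,2}$ is connected, contains the blue hole, and has interior disjoint from the loop of $B_i$ whenever $i < j$, it lies entirely on the blue side of $B_i$; this yields a nested chain $T_{k,2} \subseteq T_{k-1,2} \subseteq \dots \subseteq T_{1,2}$ of slit tori whose boundaries are the associated loops of $B_k, B_{k-1}, \dots, B_1$ respectively.

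Next I would set $\Gamma_1' := T_{1,1}$, $\Gamma_2' := T_{k,2}$, and $S_i := \overline{T_{i,2} \setminus T_{i+1,2}}$ for $1 \le i \le k-1$, and verify the bookkeeping. By construction $S_i$ is bounded by the associated loops of $B_i$ and $B_{i+1}$; the interiors of $\Gamma_1', S_1, \dots, S_{k-1}, \Gamma_2'$ are pairwise disjoint by the nesting; and their union telescopes to the whole surface, since $T_{1,1} \cup T_{1,2}$ is the doubled slit torus and $T_{i,2} = S_i \cup T_{i+1,2}$ for each $i$. This already gives the count $1 + (k-1) + 1 = k+1$, and that $\Gamma_1'$ and $\Gamma_2'$ are slit tori is part of the conclusion of Lemma \ref{order}.

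It remains to show each $S_i$ has genus zero, which is where the actual content sits, and the argument is the connected-sum computation from the proof of Proposition \ref{bigon}. Fix $i$ and put $\Gamma_i := T_{i,2}$, a slit torus of genus one. By the nesting the associated loop of $B_{i+1}$ lies in $\Gamma_i$, and cutting $\Gamma_i$ along it is the restriction to $\Gamma_i$ of the global cut of the doubled slit torus along $B_{i+1}$; the two resulting pieces are $\Gamma_i \cap T_{i+1,2} = T_{i+1,2}$ and $\Gamma_i \cap T_{i+1,1} = S_i$. Since $B_{i+1}$ is a splitting bigon, this cut must disconnect $\Gamma_i$---otherwise $B_{i+1}$ would fail to disconnect the whole doubled slit torus, contradicting that it is a splitting bigon, exactly as argued in the proof of Proposition \ref{bigon}---so $S_i \# T_{i+1,2} = \Gamma_i$ with disjoint interiors, and additivity of genus together with $\operatorname{genus}(\Gamma_i) = \operatorname{genus}(T_{i+1,2}) = 1$ forces $\operatorname{genus}(S_i) = 0$. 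The main obstacle I anticipate is the first paragraph: rigorously pinning down that the loops of $B_1, \dots, B_k$ appear in exactly this order ``from the red hole outward'' and that the $T_{i,2}$ genuinely nest, while handling the point-set subtlety that two bigon loops may share a single triangulation vertex (so ``disjoint interiors'' is the correct formulation rather than literal disjointness). Once the nesting is established, the decomposition and the genus count are routine consequences of it and of the connected-sum argument already used for Proposition \ref{bigon}.
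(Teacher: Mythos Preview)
Your proof is correct and follows essentially the same route as the paper: take $\Gamma_1' = T_{1,1}$, $\Gamma_2' = T_{k,2}$, and let each $S_i$ be the region between the consecutive bigons $B_i$ and $B_{i+1}$, then deduce $\operatorname{genus}(S_i)=0$ from additivity of genus under connected sum. If anything your argument is more explicit---the paper simply asserts that cutting along $B_i$ and $B_{i+1}$ produces three pieces with the middle one of genus zero, without spelling out the nesting $T_{k,2}\subseteq\cdots\subseteq T_{1,2}$ or the connected-sum computation that you supply.
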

\begin{proof}
    Suppose that the doubled slit torus is cut along $B_i$ and $B_{i + 1}$ for some $1 \leq i < k$. Then three surfaces $\mathcal{S}_1, \mathcal{S}_2, \mathcal{S}_3$ are formed such that $\mathcal{S}_1$ contains the red hole, $\mathcal{S}_2$ has genus zero, and $\mathcal{S}_3$ contains the blue hole, with $B_i$ forming the boundary between $\mathcal{S}_1$ and $\mathcal{S}_2$ and $B_{i + 1}$ forming the boundary between $\mathcal{S}_2$ and $\mathcal{S}_3$. Let $S_i = \mathcal{S}_2$.
    \newline
    \indent
    By Lemma \ref{order}, all the surfaces of the form $S_i$ must have disjoint interiors as there exists no splitting bigon whose associated loop is contained in $S_i$. 
    \newline
    \indent
    Now, suppose that $B_1$ divides the doubled slit torus into $T_{1, 1}$ and $T_{1, 2}$, where $T_{1, 1}$ contains the red hole; similarly, $B_k$ divides the doubled slit torus into $T_{k, 1}$ and $T_{k, 2}$, where $T_{k, 1}$ contains the red hole. Then, after cutting along all the splitting bigons $B_1, B_2, \ldots, B_k$, the surfaces $S_i$ are formed as indicated previously, as well as $\Gamma_1' = T_{1, 1}$ (which contains the red hole) and $\Gamma_2' = T_{k, 2}$ (which contains the blue hole). This completes the proof of the proposition.
\end{proof}
\begin{remark}
    The preceding results characterize the structure of all the splitting bigons in a doubled slit torus. By Lemma \ref{order}, it can be seen that they can be ordered in a ``line" from one hole to the other and by Lemma \ref{split}, it can be seen that adjacent splitting bigons bound a surface with genus zero.
\end{remark}
The following lemma can also be shown, which will help to establish some of our uniqueness results.
\begin{lem}\label{trianglesplit}
Let $T$ be a triangulation on a doubled slit torus, and let a splitting bigon consisting of vertices $\mathcal{V}$ and $\mathcal{V'}$ divide the surface into two slit tori $\Gamma_1$ and $\Gamma_2$. Then there exists a unique way to determine an unordered pair of subgraphs $\{\mathcal{T}, \mathcal{T'}\}$ such that $\mathcal{T}$ triangulates slit torus $\Gamma_1$, $\mathcal{T'}$ triangulates slit torus $\Gamma_2$, and $\mathcal{T} \cup \mathcal{T'} = T$.
\end{lem}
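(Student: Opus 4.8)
The plan is to show that the decomposition is \emph{forced} by the topology: every cell of $T$ lies on a well-defined side of the associated loop $\ell$ of the splitting bigon, and the resulting side-assignment is the only way to produce a pair $\{\mathcal{T},\mathcal{T'}\}$ with the stated properties. Write $\ell$ for the associated loop, i.e. the union of the two edges of $T$ joining $\mathcal{V}$ and $\mathcal{V'}$ together with $\mathcal{V},\mathcal{V'}$ themselves; by hypothesis $\ell$ separates the doubled slit torus into $\Gamma_1$ and $\Gamma_2$. Since $T$ is a simplicial $2$-complex, any open edge or open triangle of $T$ not contained in $\ell$ is disjoint from $\ell$ (distinct cells of $T$ meet only along common faces), hence lies entirely in one of the open pieces $\Gamma_1\setminus\ell$ or $\Gamma_2\setminus\ell$. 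Passing to closures, each edge of $T$ lies in $\overline{\Gamma_1}$ or in $\overline{\Gamma_2}$ (the two edges of $\ell$ lie in both), and each closed triangle of $T$ lies in exactly one of $\overline{\Gamma_1},\overline{\Gamma_2}$.

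\textbf{Existence.} First I would define $\mathcal{T}$ to be the subgraph of $T$ consisting of all vertices and edges of $T$ contained in $\overline{\Gamma_1}$, and $\mathcal{T'}$ analogously for $\overline{\Gamma_2}$. By the previous paragraph every edge of $T$ lies in one of the two closures, so $\mathcal{T}\cup\mathcal{T'}=T$. To see that $\mathcal{T}$ triangulates $\Gamma_1$: the closed triangles of $T$ contained in $\overline{\Gamma_1}$ cover $\overline{\Gamma_1}$ (any point of $\overline{\Gamma_1}$ lies in some closed triangle of $T$, and a triangle meeting $\Gamma_1$ is contained in $\overline{\Gamma_1}$), every edge and vertex of $\mathcal{T}$ is a face of one of them, and the intersection pattern of any two such triangles---an edge, a vertex, two vertices, or empty---is inherited directly from the triangulation $T$. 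In particular the boundary circle of the slit torus $\Gamma_1$, which is a copy of $\ell$, is covered, so $\ell\subseteq\mathcal{T}$; by symmetry $\mathcal{T'}$ triangulates $\Gamma_2$ and $\ell\subseteq\mathcal{T'}$.

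\textbf{Uniqueness.} Let $\{\mathcal{S},\mathcal{S'}\}$ be any unordered pair of subgraphs of $T$ with $\mathcal{S}$ triangulating $\Gamma_1$, $\mathcal{S'}$ triangulating $\Gamma_2$, and $\mathcal{S}\cup\mathcal{S'}=T$; say $\mathcal{S}$ corresponds to $\Gamma_1$. Since $\mathcal{S}$ is a subgraph lying in $\overline{\Gamma_1}$, we have $\mathcal{S}\subseteq\mathcal{T}$, and similarly $\mathcal{S'}\subseteq\mathcal{T'}$. For the reverse inclusion, take an edge $e$ of $\mathcal{T}$. If $e\subseteq\Gamma_1\setminus\ell$, then $e\notin\overline{\Gamma_2}\supseteq\mathcal{S'}$, so from $\mathcal{S}\cup\mathcal{S'}=T\ni e$ we get $e\in\mathcal{S}$; if instead $e$ is one of the two edges of $\ell$, then $e\in\mathcal{S}$ because $\mathcal{S}$ triangulates the slit torus $\Gamma_1$ and so must cover its boundary circle $\ell$. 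Hence $\mathcal{T}\subseteq\mathcal{S}$, so $\mathcal{T}=\mathcal{S}$ and then $\mathcal{T'}=\mathcal{S'}$. (Lemma \ref{lemma1} corroborates this picture, confirming that the vertices other than $\mathcal{V},\mathcal{V'}$ split into exactly the two groups lying in $\Gamma_1$ and in $\Gamma_2$.)

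\textbf{Main obstacle.} The delicate point is the uniqueness half---specifically, that a triangulation of $\Gamma_1$ realized as a subgraph of $T$ is forced to contain \emph{every} $T$-edge of $\overline{\Gamma_1}$, including both edges of $\ell$. This rests on two facts I would state carefully: that $\mathcal{S'}\subseteq\overline{\Gamma_2}$ so it cannot absorb interior edges of $\Gamma_1$, and that any triangulation of the surface-with-slit $\Gamma_1$ must triangulate its boundary and hence contain $\ell$. Making precise what ``a subgraph triangulates $\Gamma_1$'' means for the slit torus---a genus-one surface whose single boundary circle is exactly $\ell$---is where the argument needs the most care.
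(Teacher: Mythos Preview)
Your proposal is correct and follows essentially the same approach as the paper: both argue that each cell of $T$ not on the associated loop $\ell$ lies on a well-defined side of $\ell$, so the assignment of cells to $\mathcal{T}$ or $\mathcal{T'}$ is forced, with $\mathcal{V},\mathcal{V'}$ and the two edges of $\ell$ belonging to both. Your write-up is somewhat more careful than the paper's in separating existence from uniqueness and in flagging that both bigon edges must lie in each subgraph, but the underlying argument is the same.
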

\begin{proof}
Both $\mathcal{V}$ and $\mathcal{V'}$ must belong to both $\mathcal{T}, \mathcal{T'}$. The edges, if any, connecting $\mathcal{V}$ to $\mathcal{V'}$, not contained in the bigon, as well as any loops containing just $\mathcal{V}$ or just $\mathcal{V'}$, are each contained in exactly one of $\Gamma_1$ or $\Gamma_2$. Otherwise, they must pass through $\mathcal{V}$ or $\mathcal{V'}$ or an additional third vertex on the bigon, neither of which are possible by assumption.
\newline
\indent
It needs to be decided which of $\mathcal{T}$ and $\mathcal{T'}$ is to triangulate which of $\Gamma_1, \Gamma_2$. Without loss of generality, assume that $\mathcal{T}$ is to triangulate $\Gamma_1$ and $\mathcal{T'}$ is to triangulate $\Gamma_2$. Since it may be the other way around, the pair is described as being unordered.
\newline
\indent
If such an edge or loop is contained in $\Gamma_1$, then it must be placed in $\mathcal{T}$ and not in $\mathcal{T'}$. If such an edge or loop is contained in $T_2$, then it must be placed in $\mathcal{T'}$ and not in $\mathcal{T}$.
Lastly, all other vertices, edges, and loops are contained in exactly one of the two connected components, and so must belong to $\mathcal{T}$ if they are part of $\Gamma_1$ and $\mathcal{T'}$ otherwise. Therefore, there is only one possible way to divvy up $T$ into $\mathcal{T}$ and $\mathcal{T'}$. This completes the proof of the lemma.
\end{proof}
These results can now be generalized to surfaces of higher genus.
\begin{definition}
    A translation surface with genus $g \geq 2$ is called \textit{slitted} if it belongs in the stratum $\mathcal{H}(1, 1, \ldots, 1, 1)$ and is composed of $g$ different tori (with up to two slits) connected to one another sequentially.
\end{definition}
The definition of a splitting bigon can be generalized to an arbitrary slitted translation surface. 
\begin{definition}
    A \textit{splitting bigon} $B$ in a slitted translation surface $S$ is a bigon such that if one cuts along the associated loop of $B$, the surfaces $S_1 \cup S_2 = S$ that are formed each have genus at least one.
\end{definition}
\begin{remark}
    A doubled slit torus is an example of a slitted translation surface, and the definition of a splitting bigon over the doubled slit torus matches the generalized definition.
\end{remark}
\begin{lem}
    Let $S$ be an arbitary slitted translation surface, and suppose that $B_1$ and $B_2$ are both splitting bigons of $S$. Then if $B_1$ divides $S$ into surfaces $S_1$ and $S_2$, every point on the associated loop of $B_2$ lies on $S_i$ for some choice $i \in \{1, 2\}$.
\end{lem}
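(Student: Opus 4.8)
The plan is to mirror the proof of Lemma \ref{lemma2}, replacing the torus-specific intersection arguments there with a purely topological one that is insensitive to the genus. Write $\gamma_1$ and $\gamma_2$ for the associated loops of $B_1$ and $B_2$. Since $B_1$ is a splitting bigon, cutting $S$ along $\gamma_1$ realizes $S$ as $S_1\cup S_2$ with $S_1\cap S_2=\gamma_1$, where $\operatorname{int}(S_1)$ and $\operatorname{int}(S_2)$ are the two connected components of $S\setminus\gamma_1$; here $\gamma_1$ is an embedded circle, being the union of the two distinct edges of the bigon $B_1$, which meet only at the two vertices of $B_1$. The whole argument then reduces to understanding $\gamma_1\cap\gamma_2$ and invoking connectedness.

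First I would pin down $\gamma_1\cap\gamma_2$. Each $\gamma_i$ is a union of two edges of the triangulation $T$, and since $B_1\neq B_2$ they share at most one vertex by Remark \ref{common}; hence they share no edge, because an edge common to both would join the two vertices of $B_1$ and thereby force $B_2$ to have the same vertex set as $B_1$. Since edges of a (possibly degenerate) triangulation meet only along common vertices, every point of $\gamma_1\cap\gamma_2$ is a vertex of $T$ lying on both loops; and the vertices lying on $\gamma_i$ are exactly the two vertices of the bigon $B_i$. Therefore $\gamma_1\cap\gamma_2$ is contained in the intersection of the vertex set of $B_1$ with that of $B_2$, which by Remark \ref{common} has at most one element.

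Then I would conclude as follows. If $\gamma_1\cap\gamma_2=\varnothing$, then $\gamma_2$ is a connected subset of $S\setminus\gamma_1=\operatorname{int}(S_1)\sqcup\operatorname{int}(S_2)$, so it lies in a single $\operatorname{int}(S_i)$, hence in $S_i$. If instead $\gamma_1\cap\gamma_2=\{p\}$ for a single common vertex $p$, then $\gamma_2\setminus\{p\}$ is still connected (a circle with one point deleted is an arc) and is disjoint from $\gamma_1$, so it lies in a single $\operatorname{int}(S_i)$; adjoining the point $p\in\gamma_1\subseteq S_i$ gives $\gamma_2\subseteq S_i$. In either case every point of the associated loop of $B_2$ lies on $S_i$, which is the claim. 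If one prefers to stay close to the phrasing of Lemma \ref{lemma2}, this last step can be written as the two cases ``$B_1$ and $B_2$ have one common vertex'' and ``$B_1$ and $B_2$ have no common vertex,'' recovering the contradiction there by observing that a point of $\gamma_2$ lying on the far side of $\gamma_1$ would drag a third triangulation vertex onto the bigon $B_2$.

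The only thing that genuinely needs care is the bookkeeping in the second paragraph: that two distinct degenerate bigons cannot share an edge, and that edges of $T$ cross only at vertices, so that $\gamma_1\cap\gamma_2$ is constrained to a common vertex of the two bigons. Once that is established the genus of $S$ plays no role whatsoever, and the separation argument is identical to the doubled slit torus case; I do not anticipate any further obstacle.
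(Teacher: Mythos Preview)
Your proposal is correct and follows essentially the same approach as the paper, which simply remarks that the proof is almost identical to that of Lemma~\ref{lemma2}. Your presentation is in fact a bit cleaner: rather than reproducing the two-case contradiction argument verbatim, you isolate the key combinatorial fact (that $\gamma_1\cap\gamma_2$ consists of at most one triangulation vertex, by Remark~\ref{common} and the simplicial property that distinct edges meet only at vertices) and then finish with a direct connectedness argument, but as you yourself note this is equivalent to the case split in Lemma~\ref{lemma2}.
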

\begin{remark}
    The proof of this result is almost the same as that of Lemma \ref{lemma2}.
\end{remark}
To characterize these splitting bigons, consider the following two lemmas, which generalize Lemma \ref{order} and Lemma \ref{split}.
\begin{lem}\label{genorder}
    For some sequence of positive integers $k_1, k_2, \ldots, k_{g-1}$, the splitting bigons of a slitted translation surface $S$ can be ordered as $B_{1, 1}, B_{2, 1}, \ldots, B_{k_1, 1}, B_{1, 2}, B_{2, 2}, \ldots, B_{k_2, 2}, B_{1, 3}, \allowbreak \ldots, B_{k_{g-1}, g - 1}$ such that $B_{p, i}$ represents a bigon whose associated loop splits $S$ into a genus $i$ surface $S_1$ and a genus $g - i$ surface $S_2$ such that all $B_{x, y}$ where either $y < i$ or both $x < p$ and $y = i$ lie in $S_1$ and all remaining other bigons lie in $S_2$.
\end{lem}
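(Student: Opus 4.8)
\section*{Proof proposal for Lemma~\ref{genorder}}

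The plan is to adapt the induction underlying Lemma~\ref{order} and Proposition~\ref{split} to arbitrary genus by attaching to each splitting bigon a genus invariant and showing that this invariant, together with a nesting relation, well-orders all the splitting bigons. First I would fix, once and for all, the handle of the \emph{first} torus in the sequential decomposition of $S$ and call it the red handle. By the generalized definition of a splitting bigon, cutting $S$ along the associated loop of any splitting bigon $B$ yields two pieces, each of positive genus, whose connected sum along that loop is $S$; I let $S_1(B)$ be the piece carrying the red handle, $S_2(B)$ the other, and set $n(B)=g(S_1(B))$. Additivity of genus across the separating loop, together with positivity of both genera, gives $n(B)\in\{1,\dots,g-1\}$. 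Since the triangulation is finite there are only finitely many bigons, hence finitely many splitting bigons.

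The core step is the dichotomy: for any two distinct splitting bigons $B,B'$, \emph{exactly one} of ``the loop of $B$ lies in $S_1(B')$'' and ``the loop of $B'$ lies in $S_1(B)$'' holds, and in the first case $n(B)\le n(B')$. That \emph{at least} one holds uses the preceding lemma (the analogue of Lemma~\ref{lemma2}): the loop of $B'$ lies entirely in $S_1(B)$ or entirely in $S_2(B)$; if it lies in $S_2(B)$, then removing the loop of $B'$ from $S$ leaves the red handle in a component that still contains the loop of $B$, so the loop of $B$ lies in $S_1(B')$, and moreover $S_1(B')$ is $S_1(B)$ with a further positive-or-zero genus subsurface glued on, whence $n(B')\ge n(B)$. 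To see that \emph{not both} can hold, suppose the loop of $B'$ lies in $S_1(B)$ and cut $S_1(B)$ along it into a red-handle piece $R$ and a complementary piece $Q$. If the loop of $B$ bordered $R$ rather than $Q$, then removing the loop of $B'$ would leave $Q$ as its own component, so $Q=S_2(B')$ has positive genus; but then $R$ would sit \emph{between} the two positive-genus pieces $Q$ and $S_2(B)$ while containing the handle of the first torus, which is impossible because the first torus is an endpoint of the sequential decomposition and so cannot have positive genus on both sides of it. Hence the loop of $B$ borders $Q\subseteq S_2(B')$, so it is not in $S_1(B')$, and $n(B')=g(R)\le g(R)+g(Q)=n(B)$.

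Granting the dichotomy, I would define $B\preceq B'$ to mean $B=B'$ or the loop of $B$ lies in $S_1(B')$. By the dichotomy this is total and antisymmetric on the finite set of splitting bigons and it refines the preorder by $n(\cdot)$, i.e.\ $B\preceq B'\Rightarrow n(B)\le n(B')$; transitivity follows from the same cut-and-reattach bookkeeping (if the loop of $B$ lies in $S_1(B')$ and that of $B'$ in $S_1(B'')$, then $S_1(B')\subseteq S_1(B'')$, so the loop of $B$ lies in $S_1(B'')$). Enumerating the splitting bigons in increasing $\preceq$-order and relabelling each maximal block of constant value $n=i$ as $B_{1,i}\prec\cdots\prec B_{k_i,i}$ produces the asserted sequence. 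For fixed $B_{p,i}$: a bigon with $y<i$ cannot have $B_{p,i}$ on its red side (that would force $n(B_{p,i})\le y$), so by the dichotomy its loop lies in $S_1(B_{p,i})$; a bigon $B_{x,i}$ with $x<p$ has its loop in $S_1(B_{p,i})$ by construction of the block order; and for every remaining splitting bigon $B$ one has $B_{p,i}\prec B$, so the loop of $B_{p,i}$ is in $S_1(B)$, hence by the dichotomy the loop of $B$ is not in $S_1(B_{p,i})$, hence (by the nesting lemma) it lies in $S_2(B_{p,i})$. This is exactly the description in the statement.

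The step I expect to be the main obstacle is the ``not both'' half of the dichotomy: turning the heuristic ``the first torus is at the end of the chain, so nothing of positive genus can lie past it'' into a rigorous statement about the subsurfaces cut out by associated loops of bigons. Making this precise requires a clean description of how the associated loop of a splitting bigon meets the sequential (slit-by-slit) decomposition of $S$, careful use of additivity of genus along the possibly combinatorially degenerate associated loop, and the fact that a connected subsurface-with-boundary never has larger genus than the surface containing it. Once this is in hand, the construction of the order and the relabelling are routine bookkeeping in the spirit of Lemma~\ref{order} and Proposition~\ref{split}.
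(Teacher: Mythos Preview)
Your proposal is correct and takes a genuinely different route from the paper. The paper argues by induction on $g$: it peels off the last torus to obtain a genus-$(g-1)$ slitted subsurface $S'$, invokes the inductive hypothesis to order those splitting bigons of $S$ that already live in $S'$, and then orders the remaining bigons---asserted without justification to each split off a genus-$1$ piece containing the last torus---by proximity to that handle, mimicking the argument of Lemma~\ref{order}. You instead build the total order in one stroke: attach to every splitting bigon the genus invariant $n(B)$, establish a nesting dichotomy for pairs of bigons, and read off a linear order refining the $n$-grading. Your route is more conceptual and avoids the inductive bookkeeping; the paper's stays closer to the genus-$2$ template and makes the recursive structure explicit. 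The obstacle you single out---that the piece carrying the handle of the first torus cannot have positive-genus pieces on both of its boundary components---is precisely where the paper is also silent: its unproved claim that every splitting bigon ``not part of $S'$'' cuts off a genus-$1$ piece is the same assertion in inductive clothing. Both arguments ultimately rest on the associated loops of splitting bigons respecting the linear chain structure of the slitted surface, and you have correctly isolated this as the crux rather than introduced a new gap.
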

\begin{proof}
    The result is shown by induction on $g \geq 2$.
    \newline
    \indent
    \textbf{Base Case.} The case of $g = 2$ yields $g - 1 = 1$, and so there is a single value $k_1$ for which the bigons are ordered $B_{1, 1}, B_{2, 1}, \ldots, B_{k_1, 1}$ such that when $S$ is divided along $B_{i, 1}$, bigons of the form $B_{j, 1}$ for $j < i$ lie on one slit torus, and bigons of the form $B_{j, 1}$ for $j > i$ lie on the other slit torus. Therefore, the result follows from Lemma \ref{order}.
    \newline
    \indent
    \textbf{Inductive Hypothesis.} Suppose that for some $g - 1$, the splitting bigons in all slitted translation surfaces $S'$ of genus $g - 1$ can be numbered as $B_{1, 1}, B_{2, 1}, \ldots, B_{k_1, 1}, B_{1, 2}, B_{2, 2}, \ldots, \allowbreak B_{k_2, 2}, B_{1, 3}, \ldots, B_{k_{g-2}, g - 2}$ such that $B_{p, i}$ represents a bigon whose associated loop splits $S'$ into a genus $i$ surface $S_1'$ and a genus $g - 1 - i$ surface $S_2'$ such that all $B_{x, y}$ where either $y < i$ or both $x < p$ and $y = i$ lie in $S_1'$ and all remaining other bigons lie in $S_2'$.
    \newline
    \indent
    \textbf{Inductive Step.} The result will be proven for a slitted translation surface $S$ of genus $g$. The idea is to consider a subsurface $S'$ of $S$ consisting of the first $g - 1$ slit tori connected together. Then $S'$ is a slitted translation surface of genus $g - 1$ such that the splitting bigons of $S'$ are all splitting bigons of $S$. By the inductive hypothesis, these can be numbered as $B_{1, 1}, B_{2, 1}, \ldots, B_{k_1, 1}, B_{1, 2}, B_{2, 2}, \ldots, B_{k_2, 2}, B_{1, 3}, \ldots, B_{k_{g-2}, g - 2}$ subject to the given condition. 
    \newline
    \indent
    Now, consider the remaining $k_{g - 1}$ splitting bigons of $S$ that are not part of $S'$. Cutting along any of these bigons results in an $S_2$ surface of genus $1$. The hole in the slit torus $S_2$ can be colored red and the bigons $B_{1, g-1}, B_{2, g- 1}, \ldots, B_{k_{g-1}, g-1}$ can be ordered based upon their proximity to the red hole, where $B_{1, g- 1}$ is the farthest and $B_{k_{g - 1}, g- 1}$ is the closest (this is possible by using the same method as in the proof of Lemma \ref{order}). 
    \newline
    \indent
    Then, consider an arbitrary splitting bigon of $S$ denoted by $B_{p, i}$ that splits $S$ into $S_1$ and $S_2$, where $S_1$ has genus $i$ and $S_2$ has genus $g - i$. Suppose first that $i = g - 1$. Then all bigons of the form $B_{x, y}$ with $y < g - 1$ will clearly lie in $S_1$, and by the ordering of bigons of the form $B_{x, g - 1}$ stated above, all $x < p$ and $y = g - 1$ will also lie in $S_1$. However, all bigons $B_{x, y}$ with $x > p$ and $y = g - 1$ will lie in $S_2$. Now, suppose that $i < g - 1$. The ordering then works by the inductive hypothesis for all bigons of the form $B_{x, y}$ with $y \neq g - 1$. But the bigons with $y = g - 1$ also must all lie in $S_2$, since they are not splitting bigons of $S'$. Therefore, the ordering satisfies the given condition, completing the inductive step.
\end{proof}
\begin{lem}\label{gensplit}
    The splitting bigons $\{B_{x, y}, x \leq k_y\}$, as per Lemma \ref{genorder}, divide the slitted translation surface into $1 + \sum \limits_{i = 1}^{g - 1}k_i$ surfaces with disjoint interiors, each of which is bounded by at most two splitting bigons and has genus either zero or one.
\end{lem}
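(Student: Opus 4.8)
The plan is to read off the decomposition directly from the ordering produced by Lemma \ref{genorder}. Relabel the splitting bigons as $B_1, B_2, \ldots, B_N$, where $N = \sum_{i=1}^{g-1}k_i$ and the list is $B_{1,1}, \ldots, B_{k_1,1}, B_{1,2}, \ldots, B_{k_2,2}, \ldots, B_{k_{g-1},g-1}$; observe that the defining property of Lemma \ref{genorder} translates, for every $j$, into the statement that the loop of $B_{j+1}$ separates $S$ into a piece containing exactly the loops of $B_1, \ldots, B_j$ and a complementary piece containing exactly the loops of $B_{j+2}, \ldots, B_N$. First I would show that cutting $S$ along all $N$ loops simultaneously is well defined and yields exactly $N+1$ pieces, by removing them one at a time in this order. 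By the generalization of Lemma \ref{lemma2} stated just before Lemma \ref{genorder}, the loop of $B_{j+1}$ lies on a single side of each earlier loop, so it avoids the cuts $B_1, \ldots, B_j$ and lies inside one current piece $W$; the separation property above shows $W$ meets both sides of $B_{j+1}$ (the $B_j$-side, along the boundary just created, and the $B_{j+2}$-side), so $B_{j+1}$ is a separating simple closed curve in $W$ and the cut increases the number of pieces by exactly one. Hence the final result has $N+1 = 1 + \sum_{i=1}^{g-1}k_i$ pieces, and each inherits as boundary only the loops adjacent to it in the linear order: the two extreme pieces $U_0$ (past $B_1$) and $U_N$ (past $B_N$) are bounded by one loop each, and each intermediate piece $U_j$ by precisely $B_j$ and $B_{j+1}$, so every piece is bounded by at most two splitting bigons.

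Next I would compute the genera. Each bigon loop is a separating simple closed curve, so $S = U_0 \# U_1 \# \cdots \# U_N$ along these loops, and by additivity of genus under gluing surfaces-with-boundary along one boundary circle, $\sum_{j=0}^N g(U_j) = g$. To identify each term, apply Lemma \ref{genorder}: if $B_j$ alone separates $S$ into pieces of genera $a$ and $g-a$ with $U_0, \ldots, U_{j-1}$ on the genus-$a$ side, and $B_{j+1}$ alone separates off genus $b$ on the side carrying $U_0, \ldots, U_j$, then that second piece is the connected sum of the first with $U_j$, so $g(U_j) = b - a$. When $B_j$ and $B_{j+1}$ are consecutive within one block $B_{1,i}, \ldots, B_{k_i,i}$, Lemma \ref{genorder} gives $a = b = i$, whence $g(U_j) = 0$; when $B_j = B_{k_i,i}$ and $B_{j+1} = B_{1,i+1}$ straddle two consecutive blocks, it gives $a = i$ and $b = i+1$, whence $g(U_j) = 1$; finally $B_1 = B_{1,1}$ cuts off a genus-one piece containing no further loops, namely $U_0$, and symmetrically $g(U_N) = 1$. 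Thus every $U_j$ has genus zero or one. As a consistency check there are $g-2$ straddling gaps and two extreme pieces, giving $g$ pieces of genus one, and $\sum_{i=1}^{g-1}(k_i - 1) = N - (g-1)$ pieces of genus zero, in agreement with total genus $g$ and total count $N+1$.

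I expect the main obstacle to be the first step: verifying rigorously that the $N$ cuts are ``independent'', i.e., that each $B_{j+1}$ genuinely disconnects the piece $W$ containing it and not merely the ambient surface $S$, and that no two of the resulting pieces are accidentally identified. This is exactly where the nesting clause of Lemma \ref{genorder} is indispensable, since it pins the earlier loops to one side of $B_{j+1}$ and the later loops to the other, forcing $W$ to straddle $B_{j+1}$. The genus computation, by contrast, is routine once the surfaces-with-boundary form of additivity of genus under connected sum along a separating curve is recorded. If one prefers to avoid the simultaneous-cut argument, an alternative is induction on $g$ mirroring the proof of Lemma \ref{genorder}: apply the inductive hypothesis to the subsurface $S'$ on the first $g-1$ tori, which by Lemma \ref{genorder} has splitting bigons $B_{1,1}, \ldots, B_{k_{g-2},g-2}$, and then note that reattaching the $g$-th torus to the last piece in the ordering for $S'$ produces a doubled slit torus whose splitting bigons are precisely $B_{1,g-1}, \ldots, B_{k_{g-1},g-1}$, so that Proposition \ref{split} supplies the remaining $k_{g-1}+1$ pieces.
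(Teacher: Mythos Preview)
Your argument is correct and follows essentially the same approach as the paper: both proofs linearize the bigons via Lemma~\ref{genorder}, consider the piece between two consecutive bigons, and compute its genus as the difference of the genera cut off on one side, obtaining $0$ within a block and $1$ across blocks (or at the two ends). Your version is more explicit than the paper's about why the simultaneous cut yields exactly $N+1$ disjoint pieces, supplying the sequential-cut argument where the paper simply invokes the ordering lemma; the alternative inductive route you sketch at the end is not in the paper but is a reasonable variant.
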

\begin{proof}
    Suppose that the slitted translation surface is cut along two consecutive bigons $B_{p, q}$ and $B_{r, s}$ in the ordering given by Lemma \ref{genorder}, such that both $s = q$ and $r = p + 1$ or $s = q + 1$, $p = k_q$, $r = 1$. Then three surfaces $\mathcal{S}_1, \mathcal{S}_2, \mathcal{S}_3$ are formed such that $\mathcal{S}_1$ has genus $q$, $\mathcal{S}_3$ has genus $g - s$, $\mathcal{S}_2$ has genus $s - q$, and $\mathcal{S}_1 \# \mathcal{S}_2 \# \mathcal{S}_3 = S$.
    \newline
    \indent
    But $s = q$ or $s = q + 1$, and so $s - q \in \{0, 1\}$. Therefore, surfaces of the form $\mathcal{S}_2$ above necessarily have genus either zero or one. Consider all of the surfaces of the form $\mathcal{S}_2$ and number them in order $S_1, S_2, \ldots, S_{k - 1}$ where $k = \sum \limits_{i = 1}^{g - 1} k_i$ is the total number of splitting bigons. By Lemma \ref{order}, all the surfaces of the form $S_i$ must have disjoint interiors as there exists no splitting bigon whose associated loop is contained in $S_i$. 
    \newline
    \indent
    Now, suppose that $B_{1, 1}$ divides the doubled slit torus into a surface $T$ of genus one and another surface of $g - 1$; similarly, $B_{k_{g - 1}, g - 1}$ divides the doubled slit torus into a surface of $g - 1$ and another surface $T'$ of genus one. Then, after cutting along all the splitting bigons, the surfaces $S_i$ are formed as indicated previously, as well as $T$ and $T'$. Therefore, there are $k - 1 + 2 = k + 1$ total surfaces with disjoint surfaces, each bounded by at most two bigons, whose connected sum is $S$, and each of which has genus either zero or one. This completes the proof of the proposition.
\end{proof}

\section{Uniqueness Results}\label{main}
Suppose that there exists a circle packing $P$ on the doubled slit torus with an associated triangulation $T$. Furthermore, suppose that the packing contains two double circles $C_1$ and $C_2$ such that the slit connects the centers of the two circles. 
\begin{figure}[H]

    \centering
    \includegraphics[width=10cm]{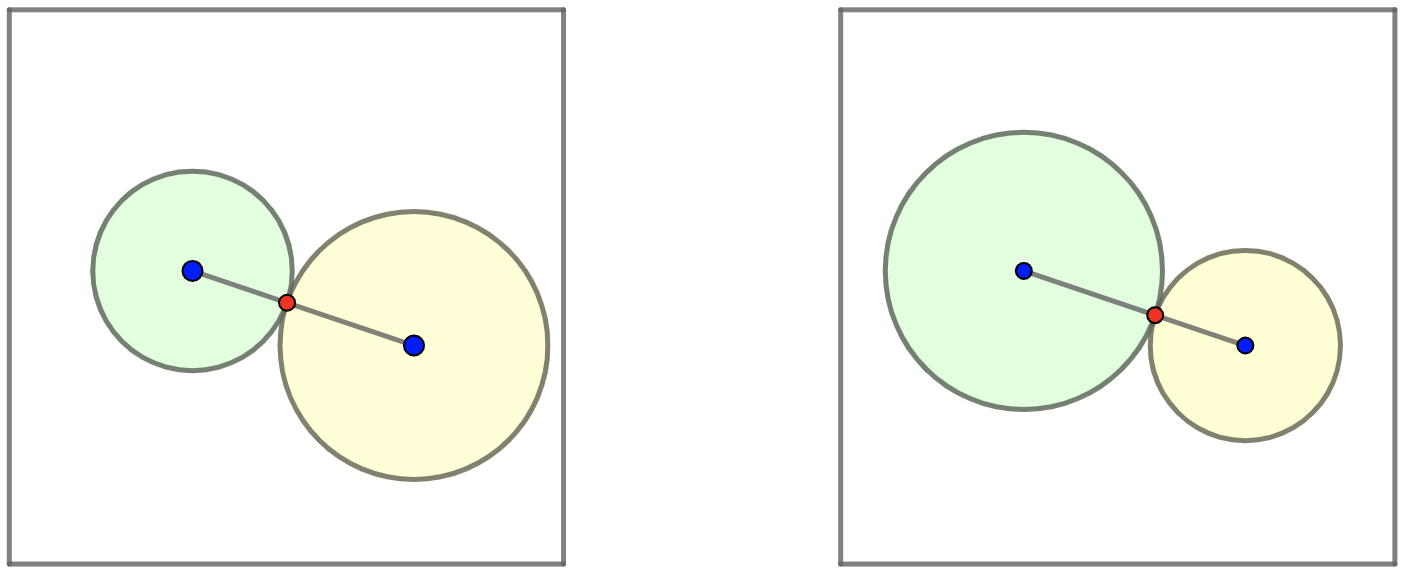}
    \caption{An illustration of the circle packing $P$. Here, $C_1$ is the green double circle and $C_2$ is the yellow double circle. The two copies of each double circle on the different slit tori have differing radii. The circles $C_1$ and $C_2$ are tangent at two distinct points, which are colored red.}
    \label{fig:figure12}
\end{figure}
\begin{remark}
Note that the two copies of $C_1$ on the different slit tori need not have the same radius, as illustrated in Figure \ref{fig:figure12}. Similarly, the two copies of $C_2$ need not have the same radius, only the same center.
\end{remark}
Note that the bigon connecting the vertices corresponding to $C_1$ and $C_2$ must be splitting since the two double circles topologically divide the doubled slit torus into two slit tori. Number the $k$ splitting bigons as $B_1, B_2, \ldots, B_k$ as per Lemma \ref{order}.
\begin{definition}
    For a circle packing $Q$ containing $C_1$ and $C_2$ (which are fixed in place) of the same doubled slit torus with the same associated topological triangulation $T$, the \textit{order} $i$ of $Q$ is defined to be the unique $1 \leq i \leq k$ such that the bigon $B_i$ consists of the vertices $v_1$ and $v_2$, corresponding to $C_1$ and $C_2$, respectively, in $Q$.
\end{definition}
Consider the following lemma.
\begin{lem}\label{samesplit}
    Let $P'$ be a circle packing containing $C_1$ and $C_2$ of the doubled slit torus on the same triangulation $T$. If the order of $P$ is equal to the order of $P'$, then $P'$ is equal to $P$ or is equal to a hyperelliptic involution of $P$.
\end{lem}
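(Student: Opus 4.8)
The plan is to cut $P$ and $P'$ along the splitting bigon they share, reduce the two resulting halves to ordinary circle packings on tori, pin those down with Corollary~\ref{dut}, and then account for the one remaining binary ambiguity.

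Since $P$ and $P'$ have the same order $i$, in both packings the circles $C_1$ and $C_2$ are carried by the two vertices of one and the same combinatorially fixed splitting bigon $B_i$. Realised on the surface, the associated loop of $B_i$ is, in either packing, the union of the two tangency arcs of $C_1$ and $C_2$; these arcs run through the centres of $C_1$ and $C_2$, i.e.\ through the two ends of the slit, so this loop is exactly the closed curve traced out by the slit. As $C_1$, $C_2$, and therefore the slit, are held fixed, cutting along this loop yields the \emph{same} unordered pair of slit tori $\Gamma_1,\Gamma_2$ for $P$ and for $P'$. Applying Lemma~\ref{trianglesplit} to the combinatorial data $(T,B_i)$, the triangulation $T$ splits, uniquely, into an unordered pair $\{\mathcal{T},\mathcal{T}'\}$ of subgraphs triangulating $\Gamma_1$ and $\Gamma_2$ --- and this pair is the same for $P$ and for $P'$.

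Next I would close up the slit of each $\Gamma_j$, re-identifying its two sides, to obtain an honest torus $\widehat{\Gamma}_j$ that depends only on $M$ and on the (fixed) slit, not on the packing. Restricting $P$ to $\Gamma_j$ and closing up, the two edges of $B_i$, which together form the slit, collapse to a single edge; the induced combinatorial triangulation becomes a genuine triangulation $\widehat{\mathcal{T}}_j$ of $\widehat{\Gamma}_j$ (the one carried by whichever of $\mathcal{T},\mathcal{T}'$ sits on $\Gamma_j$), and $C_1,C_2$ become ordinary circles of $\widehat{\Gamma}_j$ centred at the former slit ends. The crucial --- and, I expect, most delicate --- step is to check that this operation turns $P|_{\Gamma_j}$ into a bona fide circle packing $\widehat{P}_j$ of $\widehat{\Gamma}_j$, i.e.\ that all four conditions of Definition~\ref{def:tscp} (above all the metric condition) persist, and that $C_1$ and $C_2$ descend to two genuinely pinned circles there; this is exactly where the cone metric along the slit must be handled with care.

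Granting this, fix the labelling so that in $P$ the subgraph $\mathcal{T}$ triangulates $\Gamma_1$. There are two cases. If the same holds in $P'$, then $\widehat{P}_1$ and $\widehat{P'}_1$ are circle packings of the same torus $\widehat{\Gamma}_1$ on the same triangulation $\widehat{\mathcal{T}}_1$ sharing the two vertices $C_1,C_2$, so Corollary~\ref{dut} forces $\widehat{P}_1=\widehat{P'}_1$; likewise $\widehat{P}_2=\widehat{P'}_2$; reopening and regluing the slits gives $P=P'$. If instead $P'$ carries the exchanged assignment (with $\mathcal{T}'$ on $\Gamma_1$ and $\mathcal{T}$ on $\Gamma_2$), I would bring in the hyperelliptic involution $\eta$ of $M$, available by Proposition~\ref{hyper} through the abelian-differential structure: the point is that $\eta$ sends this configuration to the one in which the $\mathcal{T}/\mathcal{T}'$ assignment is reversed while fixing the slit and the pair $\{C_1,C_2\}$ setwise, so $\eta(P)$ is again an admissible circle packing on $T$ with $C_1,C_2$ in place but with assignment opposite to $P$, hence matching that of $P'$. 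Applying the first case to the pair $(P',\eta(P))$ then yields $P'=\eta(P)$, so that $P'$ equals $P$ or a hyperelliptic involution of $P$. The second delicate point, besides the reduction above, is precisely this last assertion: that exchanging the unordered pair $\{\mathcal{T},\mathcal{T}'\}$ is implemented by a hyperelliptic involution and respects all the pinned data.
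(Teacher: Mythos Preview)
Your argument follows essentially the same route as the paper's: split along the shared bigon $B_i$, invoke Lemma~\ref{trianglesplit} to obtain the unordered pair $\{\mathcal{T},\mathcal{T}'\}$, pass to honest tori and apply Corollary~\ref{dut} on each side, then identify the residual binary choice with the hyperelliptic involution. The only cosmetic difference is that the paper produces the tori by ``adding in a surface of genus zero'' to each $\Gamma_j$ rather than by re-closing the slit as you do; your construction is the more geometrically concrete of the two, and you are right to flag the metric check and the involution-realises-the-swap step as the places where care is needed---the paper simply asserts both.
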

\begin{proof}
    Denote the shared order of both packings as $i$ and let $V_1, V_2 \in T$ be the vertices corresponding to circles $C_1$ and $C_2$ respectively in both circle packings. Then $V_1$ and $V_2$ make up the bigon $B_i$ in $T$. 
    \newline
    \indent
    After removing $V_1$ and $V_2$, the edges between them, and the loops containing just $V_1$ or just $V_2$, exactly two connected components are left by Lemma \ref{lemma1}. As before, an equivalence relation $\sim_B$ is set up on all the vertices of $T$ except for $V_1$ and $V_2$ with two equivalence classes such that $X \sim_i Y$ holds if and only if there exists a path from $X$ to $Y$ along $T$ passing through neither $V_1$ nor $V_2$.
    \newline
    \indent
    Now, suppose that $B_i$ divides the doubled slit torus into two tori $T_{i, 1}$ and $T_{i, 2}$. By Lemma \ref{trianglesplit}, there exists a unique unordered pair of subgraphs $\{\mathcal{T}, \mathcal{T'}\}$ such that $\mathcal{T}$ triangulates $T_{i, 1}$ in circle packing $P$ and $\mathcal{T'}$ triangulates $T_{i, 2}$ in circle packing $P$, and $\mathcal{T} \cup \mathcal{T'} = T$. 
    \newline
    \indent
    In circle packing $P'$ then, either $\mathcal{T}$ triangulates $T_{i, 1}$ and $\mathcal{T'}$ triangulates $T_{i, 2}$, or $\mathcal{T'}$ triangulates $T_{i, 1}$ and $\mathcal{T}$ triangulates $T_{i, 2}$. Let $T_1$ be the element of $\{\mathcal{T}, \mathcal{T'}\}$ that actually triangulates $T_{i, 1}$ in $P'$ and let $T_2$ be the other element.
    \newline
    \indent
    Extend the surfaces $T_{i, 1}$ and $T_{i, 2}$ (which are both slit tori) to tori $\overline{T_{i, 1}}$ and $\overline{T_{i, 2}}$ by adding in a surface of genus zero. Then $T_1$ triangulates $\overline{T_{i, 1}}$ and $T_2$ triangulates $\overline{T_{i, 2}}$, as there is simply an extra bigon face being added and none of the other conditions for triangulation are broken.
    \newline
    \indent
    Now, separate circle packings $P'_1$ and $P'_2$ can be considered on $\overline{T_{i, 1}}$ and $\overline{T_{i, 2}}$ with associated triangulations $T_1$ and $T_2$ respectively. Note that both $P_1'$ and $P_2'$ contain the two circles $C_1$ and $C_2$, which are fixed in place on the doubled slit torus. By Corollary \ref{dut},  it is known that the remaining circles on each of the extended tori (and thus the slit tori as well) must be fixed in place. This implies that the entire circle packing is fixed. Therefore, the only way to vary the entire circle packing $P'$ is to swap $T_1$ as being $\mathcal{T}$ or $\mathcal{T'}$, which is exactly the hyperelliptic involution, as desired. This completes the proof of the proposition.
\end{proof}
This lemma can be used to prove the following theorem, our first main uniqueness result.
\begin{thm}\label{unique}
    Suppose that a circle packing $P$ with an associated triangulation $T$ is fixed on an $\mathcal{H}(1, 1)$ doubled slit torus that contains two externally tangent double circles $C_1$ and $C_2$, such that the slit connects the centers of these two circles. Then, there are only a finite number of possibilities for a second, distinct circle packing $P'$ with the triangulation $T$ containing $C_1$ and $C_2$ (which are fixed in place).    
\end{thm}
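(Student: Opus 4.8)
The plan is to reduce the theorem to a finiteness count over the possible \emph{orders} of a competing packing, and then use Lemma \ref{samesplit} to control each order. Let $P'$ range over circle packings on the same doubled slit torus that carry the same combinatorial triangulation $T$ and contain $C_1$ and $C_2$ fixed in their given positions. The first step is to check that every such $P'$ has a well-defined order. Since $C_1$ and $C_2$ are externally tangent double circles, in the geometric realization $P'$ they are tangent at two distinct points, so their vertices $v_1,v_2$ are joined in $T$ by exactly two edges, forming a bigon; and because the centers of $C_1,C_2$ are the two endpoints of the slit, the union of these double circles still separates the surface into two slit tori (as noted after Figure \ref{fig:figure12}), so this bigon is splitting. ``Splitting bigon'' is a property of the combinatorial surface alone, so this bigon must coincide with one of the finitely many distinct splitting bigons $B_1,\dots,B_k$ enumerated via Lemma \ref{order}; its index is the order of $P'$, well-defined because distinct $B_j$ have distinct vertex pairs (Remark \ref{common}). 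In particular there are at most $k$ possible orders.

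The second step is to bound, for each fixed $i\in\{1,\dots,k\}$, the number of admissible $P'$ of order $i$. Here I apply Lemma \ref{samesplit}: if $P_1'$ and $P_2'$ are two admissible packings both of order $i$, then, taking $P_1'$ in the role of $P$ in that lemma, $P_2'$ equals $P_1'$ or a hyperelliptic involution of $P_1'$; hence the admissible packings of order $i$ form a set of size at most two. One should also record that the hyperelliptic involution of an admissible packing is again admissible, since it preserves the surface, the triangulation $T$, and the pair of double circles centered at the slit endpoints, so this count of two is attained at worst, never exceeded. Combining the two steps, every admissible $P'$ has one of at most $k$ orders and there are at most two admissible packings per order, so there are at most $2k$ admissible packings in all, and in particular only finitely many distinct from $P$ itself, which is exactly the assertion.

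The routine content is the bookkeeping of the first paragraph; the substantive input is Lemma \ref{samesplit}, which already did the real work by slicing $T$ along $B_i$ (Lemma \ref{trianglesplit}), capping each slit torus off to an honest torus, and invoking the two-degrees-of-freedom rigidity of Corollary \ref{dut}. The point I expect to need the most care in the write-up, and the closest thing here to an obstacle, is ensuring that the notion of order transfers cleanly between the fixed abstract triangulation $T$ and the varying geometric packings realizing it: concretely, that for \emph{any} admissible $P'$ the $\{v_1,v_2\}$-bigon genuinely \emph{is} one of the $B_j$'s of $T$ rather than merely resembling a splitting bigon geometrically, which is precisely where the observation that being splitting is a purely combinatorial condition is doing the work.
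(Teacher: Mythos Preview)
Your argument is correct and follows the paper's proof essentially line for line: enumerate the splitting bigons $B_1,\dots,B_k$ via Lemma \ref{order}, observe that the bigon between $v_1$ and $v_2$ in any admissible $P'$ must be one of these, and then invoke Lemma \ref{samesplit} to cap each order at two packings, giving at most $2k$ in total. Your write-up is slightly more explicit than the paper's in two places---you justify why Lemma \ref{samesplit} may be applied with an arbitrary admissible packing in the role of $P$ (not just the original one), and you flag that ``splitting'' is a combinatorial invariant of $T$ so that the geometric bigon in $P'$ really is some $B_j$---but these are elaborations of the same proof, not a different route.
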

\begin{proof}
    Consider the vertices $v_1$ and $v_2$ in $T$ corresponding to $C_1$ and $C_2$, respectively, as per the circle packing $P'$. Then, there is a splitting bigon determined by $v_1$ and $v_2$. Suppose that this splitting bigon is of the form $B_i$, where $1 \leq i \leq k$. Then there are $k$ possibilities for $i$. 
    \newline
    \indent
    Suppose the value of $i$ is fixed. By Proposition \ref{samesplit}, there are at most two possibilities for $P'$. Therefore, there are at most $2k$ possibilities for the circle packing $P'$, of which $2k - 1$ are distinct from the original packing $P$. This is a finite number, as desired.
\end{proof}
\begin{definition}
    Define the number of splitting bigons in a given triangulation $T$ to be $d(T)$. 
\end{definition}
\begin{remark}\label{count}
    The proof of the previous theorem establishes an upper bound of $2 \cdot d(T) - 1$ possibilities for $P' \neq P$ also containing $C_1$ and $C_2$. A lower bound for the number of possibilities for $P' \neq P$ is zero, achieved when $d(T) = 1$ and the subgraphs $\mathcal{T}$ and $\mathcal{T'}$ are equal.
\end{remark}
This uniqueness result can be generalized to surfaces with genus $g > 2$, using Lemma \ref{genorder} and Lemma \ref{gensplit}. This requires the following definition.
\begin{definition}
    For a circle packing $Q$ of the same slitted translation surface with the same triangulation $T$ containing circles of the form $C_{j, 1}$ and $C_{j, 2}$ (which are fixed in place along the $j$th slit in sequential order, which connects the centers of the two circles), the \textit{order} $i$ of $Q$ with respect to the $j$th slit is defined to be the unique $1 \leq i \leq k_j$ such that the bigon $B_{i, j}$ consists of the vertices $v_1$ and $v_2$, corresponding to $C_{j, 1}$ and $C_{j, 2}$, respectively, in $Q$.
\end{definition}
Suppose that there exist two circle packings $P$ and $P'$ on the same slitted translation surface each containing externally tangent double circles $C_{j, 1}$ and $C_{j, 2}$, which are fixed in place on the $j$th slit (that connects the centers of the two circles). Suppose further that $P$ and $P'$ have the same associated topological triangulation $T$. Consider the following lemma.
\begin{lem}\label{euler}
    Let $B$ and $B'$ be two arbitrary splitting bigons in the shared triangulation $T$, whose associated loops bound surfaces $S_P$ and $S_P'$ with respect to the packings $P$ and $P'$, respectively. Then, the genus of $S_P$ is equal to the genus of $S_P'$.
\end{lem}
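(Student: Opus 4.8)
The plan is to show that the genus of $S_P$ is computed by purely combinatorial data read off from the abstract triangulation $T$, so that the very same quantity computes the genus of $S_{P'}$. The key point is that, although the geometric placement of the associated loops of $B$ and $B'$ on the surface depends on the packing, the way these loops partition the combinatorial triangulation does not.

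First I would fix notation, writing $\{V_1,V_2\}$ and $\{W_1,W_2\}$ for the vertex pairs of the bigons $B$ and $B'$. Applying the slitted-surface generalization of Lemma \ref{lemma2} to the packing $P$, the associated loop of $B'$ lies entirely on one side of the associated loop of $B$; concretely, when $B$ and $B'$ are vertex-disjoint, both $W_1$ and $W_2$ lie in a single connected component of the graph obtained from $T$ by deleting $V_1,V_2$ together with the edges and loops at those vertices (two such components exist by the generalization of Lemma \ref{lemma1}). Since $B$ and $B'$ are distinct, Remark \ref{common} forces them to share at most one vertex, so even in the non-disjoint case at least one of $W_1,W_2$, say $W_2$, differs from $V_1,V_2$, and the side of $B$ occupied by $B'$ is the side containing $W_2$. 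In every case this is a statement about connectivity in the abstract graph $T$, so it holds identically for $P'$. Combining this with the generalization of Lemma \ref{trianglesplit}, applied first to $B$ and then again, inside the relevant piece, to $B'$, I would extract a subgraph $T_0 \subseteq T$ --- obtained by intersecting the component data of $B$ with that of $B'$, adjoining the (at most four) bigon vertices together with the appropriate edges and triangular faces (the latter sorted into the correct region using the gluing data of $T$) --- such that $T_0$ triangulates the surface bounded by the loops of $B$ and $B'$. Crucially, the recipe producing $T_0$ from $T$ never refers to the packing, so the same $T_0$ triangulates $S_P$ in $P$ and triangulates $S_{P'}$ in $P'$.

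Next I would invoke Euler's formula. Writing $V_0,E_0,F_0$ for the numbers of vertices, edges, and faces of $T_0$, we get $\chi(S_P) = V_0 - E_0 + F_0 = \chi(S_{P'})$, since the middle expression depends on $T_0$ alone. Translation surfaces and hence $S_P$ are orientable, so $\chi(S_P) = 2 - 2\,\mathrm{genus}(S_P) - b$, where $b$ is the number of boundary components of $S_P$; this number is also combinatorial, equal to $2$ when $B$ and $B'$ are vertex-disjoint and otherwise determined by the cyclic order of the edges of $T$ around the shared vertex, data common to $P$ and $P'$. (Equivalently, one may observe that $T_0$, as a ribbon graph, reconstructs $S_P$ up to homeomorphism, and $T_0$ is the same for both packings.) Since the same identity holds for $S_{P'}$ with the same value of $\chi$ and the same $b$, we conclude $\mathrm{genus}(S_P) = \mathrm{genus}(S_{P'})$.

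I expect the main obstacle to be the middle step: rigorously pinning down $T_0$ by a packing-independent procedure. The substance is verifying that "which side of $B$ the loop of $B'$ lies on," and the assignment of each remaining edge and triangular face of $T$ to the region between the two loops, can be phrased entirely in terms of connectivity in $T$ (and its rotation system) rather than in terms of any particular embedding --- which is precisely what the generalizations of Lemmas \ref{lemma1}, \ref{lemma2}, and \ref{trianglesplit} are designed to provide. Once $T_0$ is fixed, the Euler-characteristic bookkeeping is routine.
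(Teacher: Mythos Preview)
Your proposal is correct and follows essentially the same approach as the paper: both arguments extract a packing-independent subgraph of $T$ triangulating the region between the loops of $B$ and $B'$ and then read off the genus via the Euler characteristic. Your version is in fact more careful than the paper's, which describes the subgraph somewhat loosely and writes $\chi = 2 - 2g$ without the boundary-component correction you include.
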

\begin{proof}
    Consider any circle packing $Q$ on $T$ and the surface $S_Q$ bounded by the associated loops of bigons $B$ and $B'$. It suffices to show that the genus of $S_Q$ is invariant. Let $v_1$ and $v_2$ correspond to the vertices of $B$ and $v_3$ and $v_4$ correspond to the vertices of $B'$ (note that $v_1, v_2, v_3, v_4$ need not all be distinct). Then consider all vertices and edges on a path in $T$ from a vertex $v_i$ to $v_j$, where $i \in \{1, 2\}$ and $j \in \{3, 4\}$. Along with $B$ and $B'$, they form a subgraph $T'$ in $T$. This subgraph $T'$ forms a triangulation of $S_Q$ is invariant of the actual packing $Q$. Therefore, the Euler characteristic of $T'$ is invariant of $Q$. But since $T'$ triangulates $S_Q$, the Euler characteristic of $S_Q$ is invariant. It follows that the genus of $S_Q$ is invariant, since $\chi(S_Q) = 2 - 2g(S_Q)$. This completes the proof of the lemma.
\end{proof}
This idea can be used to show the following theorem.
\begin{thm}\label{genunique}
    Suppose that $S$ is a genus $g$ slitted translation surface with $g > 1$ and that $P$ is a circle packing containing $C_{j, 1}$ and $C_{j, 2}$ (for all $1 \leq j \leq g - 1$), such that the double circles are fixed in place along the $j$th slit sequentially, which connects their centers. Then, there are only a finite number of circle packings $P' \neq P$ on $S$ with the same associated triangulation $T$ that also contain all the double circles of the form $C_{j, 1}$ and $C_{j, 2}$ fixed in place along each slit.
\end{thm}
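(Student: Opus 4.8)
The plan is to run the strategy of Theorem~\ref{unique} one slit at a time. For each $1\le j\le g-1$ and each candidate packing $P'$, the two vertices carrying $C_{j,1}$ and $C_{j,2}$ are joined by a bigon, and since these double circles are fixed in place along the $j$-th slit they topologically separate $S$ into the first $j$ tori and the last $g-j$ tori, both of genus $\ge 1$; hence this bigon is a splitting bigon, and by Lemma~\ref{euler} its genus type is independent of the packing. So it must coincide with one of the finitely many bigons $B_{x,j}$, $1\le x\le k_j$, furnished by Lemma~\ref{genorder}, and the \emph{order tuple} $(x_1,\dots,x_{g-1})$ of $P'$ ranges over a set of size at most $\prod_{j=1}^{g-1}k_j$. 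It then remains to bound, for a fixed order tuple, the number of admissible $P'$ realizing it.

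So suppose $P_1$ and $P_2$ are admissible packings with the same order tuple $(x_1,\dots,x_{g-1})$. Then for every $j$ the splitting bigon $B_{x_j,j}$ is realized at the $j$-th slit of $S$ in both packings, and iterating Lemma~\ref{trianglesplit} the cuts along $B_{x_1,1},\dots,B_{x_{g-1},g-1}$ induce a single, packing-independent combinatorial decomposition $T=T_1\cup\cdots\cup T_g$, realized in each packing as the decomposition of $S$ into its $g$ slit tori $U_1,\dots,U_g$. Re-closing the one or two slits bounding $U_m$ turns it into a fixed flat torus $\overline{U_m}$ carrying a triangulation $\overline{T_m}$ (one extra bigon face per re-closed slit, as in Lemma~\ref{samesplit}), on which $P_1$ and $P_2$ restrict to circle packings that agree on at least two circles, namely the two completed fixed double circles inherited from one bounding slit of $U_m$. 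By Corollary~\ref{dut} the two restricted packings coincide, for every $m$. When $g\ge 3$ the pieces $T_1,\dots,T_g$ are pairwise distinguished by which cut bigons they contain (the end pieces by the single, and distinct, bigons $B_{x_1,1}$, $B_{x_{g-1},g-1}$ bounding them, the interior pieces by the pair of types $\{m-1,m\}$), so the identification $T_m\leftrightarrow U_m$ is forced and the coincidences give $P_1=P_2$, yielding at most $\prod_{j=1}^{g-1}k_j$ admissible packings in all; when $g=2$ the single splitting bigon bounds both slit tori, giving one further binary choice, exactly the hyperelliptic swap of Lemma~\ref{samesplit}, and the count becomes at most $2k_1$. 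Either way the number of $P'\ne P$ is finite.

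The main obstacle is the topological bookkeeping supporting the middle step: one must verify that the $g-1$ slit bigons are pairwise non-crossing (via the generalization of Lemma~\ref{lemma2}) and of the correct genus types, that cutting along exactly these yields precisely the slit tori $U_m$ and not some finer decomposition, and that re-closing a bounding slit produces a well-defined flat torus on which the completed $C_{j,\cdot}$ occupy a packing-independent position. Once these points are in place, the genus-one structure from Lemma~\ref{gensplit} together with the two-circle rigidity of Corollary~\ref{dut} finishes the argument, and the explicit bound $\prod_{j=1}^{g-1}k_j$ (doubled when $g=2$) generalizes Remark~\ref{count}.
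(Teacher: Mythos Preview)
Your approach mirrors the paper's: reduce to a slit-by-slit assignment of splitting bigons, decompose $T$ accordingly, and then invoke Corollary~\ref{dut} on each torus piece. The finiteness conclusion survives, but there is a gap in your counting for $g\ge 3$. You assert that the splitting bigon sitting at the $j$th slit in $P'$ must be one of the $B_{x,j}$, $1\le x\le k_j$, from Lemma~\ref{genorder}. This is not justified: the intrinsic genus type of that bigon is only the \emph{unordered} pair $\{j,g-j\}$, and in the numbering of Lemma~\ref{genorder} the bigons of $T$ with this type are the $B_{x,j}$ \emph{together with} the $B_{x,g-j}$. Lemma~\ref{euler} does not distinguish the two families.

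The paper handles this explicitly by a two-case split: either the first slit in $P'$ sits at some $B_{x,1}$, in which case an induction via Lemma~\ref{euler} forces the $j$th slit to sit at some $B_{x,j}$; or the first slit sits at some $B_{x,g-1}$, forcing the $j$th to sit at some $B_{x,g-j}$. The second case is a global reversal of the bigon-to-slit assignment and contributes another $\prod_j k_j$ possibilities, yielding the bound $2\prod_j k_j$ recorded in Remark~\ref{gencount}. Your observation that for $g\ge 3$ the pieces $T_1,\dots,T_g$ are distinguished by their bounding bigons is correct \emph{within} a fixed assignment, but it does not rule out this second family of assignments; so the factor of $2$ you attach only to $g=2$ (via the hyperelliptic end-piece swap) is in fact present for all $g$, arising for $g\ge 3$ from the reversal rather than from an end-piece ambiguity.
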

\begin{proof}
    Number the splitting bigons of $P$ in order as $\{B_{x, y}, y \leq k_x\}$ for $k_1, k_2, \ldots, k_{g - 1}$. Consider the splitting bigon corresponding to the first slit in $P'$. Since this bigon divides $S$ into a surface of genus one and a surface of genus $g - 1$, there are two cases.
    \newline
    \indent
    \textbf{Case 1.} The splitting bigon corresponding to the first slit in $P'$ is of the form $B_{x, 1}$ for $x \leq k_1$. It will be shown that the bigon corresponding to the $j$th slit in $P'$ must be of the form $B_{x, j}$ for $x \leq k_j$ via induction on $j$.
    \newline
    \indent
    \textbf{Base Case.} If $j = 1$, the result is clear by the case assumption.
    \newline
    \indent
    \textbf{Inductive Hypothesis.} Suppose that the splitting bigon corresponding to the $j$th slit in $P'$ is of the form $B_{x, j}$ for some $j < g - 1$. 
    \newline
    \indent
    \textbf{Inductive Step.} Let $B$ be the splitting bigon corresponding to the $j + 1$th slit in $P'$. By Lemma \ref{euler}, $B$ needs to correspond to a bigon in $P$ that bounds a surface of genus one with the splitting bigon $B_{x, j}$ corresponding to the $j$th slit. Therefore, $B = B_{y, j - 1}$ for some $y$ or $B = B_{y, j + 1}$ for some $y$. However, splitting $S$ along the associated loop of $B$ must result in a genus $j + 1$ surface and a genus $g - 1 - j$ surface as $B$ corresponds to the $j + 1$th slit. Therefore, $B = B_{y, j - 1}$ is impossible, forcing $B = B_{y, j + 1}$, which completes the inductive step. 
    \newline
    \indent
    Therefore, there are $\prod \limits_{i = 1}^{g - 1} k_i$ ways to choose which splitting bigons correspond to which slit in this case.
    \newline
    \indent
    \textbf{Case 2.} The splitting bigon corresponding to the first slit in $P'$ is of the form $B_{x, g - 1}$ for $x \leq k_{g - 1}$. It will be shown that the bigon corresponding to the $j$th slit in $P'$ must be of the form $B_{x, g - j}$ for $x \leq k_{g - j}$ via induction on $j$.
    \newline
    \indent
    \textbf{Base Case.} If $j = 1$, the result is clear by the case assumption.
    \newline
    \indent
    \textbf{Inductive Hypothesis.} Suppose that the splitting bigon corresponding to the $j$th slit in $P'$ is of the form $B_{x, g - j}$ for some $j < g - 1$. 
    \newline
    \indent
    \textbf{Inductive Step.} Let $B$ be the splitting bigon corresponding to the $j + 1$th slit in $P'$. By Lemma \ref{euler}, $B$ needs to correspond to a bigon in $P$ that bounds a surface of genus one with the splitting bigon $B_{x, g - j}$ corresponding to the $j$th slit. Therefore, $B = B_{y, g + 1 - j}$ for some $y$ or $B = B_{y, g - 1 - j}$ for some $y$. However, splitting $S$ along the associated loop of $B$ must result in a genus $j + 1$ surface and a genus $g - 1 - j$ surface as $B$ corresponds to the $j + 1$th slit. Therefore, $B = B_{y, g + 1 - j}$ is impossible, forcing $B = B_{y, g - 1 - j}$, which completes the inductive step. 
    \newline
    \indent
    Note that every assignment of a splitting bigon to a slit in this case constitutes a topological reversal of the resulting triangulation as per the assignment in the previous case, and vice versa. Therefore, there are \textit{at most} $\prod \limits_{i = 1}^{g - 1} k_i$ ways to choose which splitting bigons correspond to which slit in this case (with there being zero new ways if the triangulation is topologically invariant under reversal).
    \newline
    \indent
    Once the splitting bigons corresponding to each slit in $P'$ are determined explicitly, subgraphs $T_1, T_2, \ldots, T_g$ can be formed such that $T_i$ triangulates the $i$th slit torus sequentially in order and $\bigcup \limits_{i = 1}^{g} T_i = T$.  
    \newline
    \indent
    Note that $T_2, T_3, \ldots, T_{g - 1}$ are determined uniquely. For given splitting bigons $B$ and $B'$ that correspond to the consecutive $i$th slit and $i + 1$th slit, consider all the vertices and edges of paths from a vertex in $B$ to a vertex in $B'$, in addition to the vertices and edges of the bigons themselves, as in the proof of Lemma \ref{euler}. This creates a unique triangulation $T_{i + 1}$. This process works for all $1 \leq i < g - 1$.
    \newline
    \indent
    Suppose all the splitting bigons and elements of $T_2, \ldots, T_{g - 1}$ from $T$ are eliminated. Two unordered connected components, denoted $T_1$ and $T_g$, are left, both of which triangulate a slit torus. Note that $T_1$ and $T_g$ can both be swapped if they are different. Suppose that they are and there exist two distinct triangulations $\mathcal{T}$ and $\mathcal{T'}$ of a genus one slit torus such that $T_1 = \mathcal{T}, T_g = \mathcal{T'}$ or $T_1 = \mathcal{T'}, T_g = \mathcal{T}$. Since:
    \[\mathcal{T} \cup T_2 \cup T_3 \cup \cdots \cup T_{g - 1} \cup \mathcal{T'} = T = \mathcal{T'} \cup T_2 \cup T_3 \cup \cdots \cup T_{g - 1} \cup \mathcal{T}\]
    and $\mathcal{T}$ is distinct from $\mathcal{T'}$, we have that $T_2 \cup T_3 \cup \ldots \cup T_{g - 1}$, a triangulation of the connected sum of the $g - 2$ tori in the middle, must be topologically invariant under reversal. Therefore, the two embeddings of $T$ in $S$ formed by swapping $\mathcal{T}$ and $\mathcal{T'}$ must be reversals of each other. But the duplicity of reversals has already been taken care of in Case $2$ above, so these do not create more possibilities for $P'$ that have not already been counted while determining the assignment of splitting bigons to slits.
    \newline
    \indent
    Consider the slit tori $\Gamma_1, \Gamma_2, \ldots, \Gamma_g$ such that $\Gamma_1 \# \Gamma_2 \# \ldots \# \Gamma_g = S$. Suppose now that there exists a tuple $(T_1, T_2, \ldots, T_g)$ such that $T_i$ triangulates $\Gamma_i$. For each $i$, the slit torus $\Gamma_i$ can be extended to the torus $\overline{\Gamma_i}$ by adding in either one or two surfaces of genus zero. Then $T_i$ triangulates $\Gamma_i$ for all $1 \leq i \leq g$, as there are simply either one or two extra bigon faces being added and none of the other conditions for triangulation are broken.
    \newline
    \indent
    Now, consider separate circle packings $P'_1, P'_2, \ldots, P'_g$ aon $\Gamma_1, \Gamma_2, \ldots, \Gamma_g$, respectively, with associated triangulations $T_1, T_2, T_3, \ldots, T_g$ respectively. Note that each packing $P'_i$ contains either two double circles or four double circles that are fixed in place. By Corollary \ref{dut}, it is known that the remaining circles on each of the extended tori (and thus the slit tori as well) must be fixed in place. This implies that the entire circle packing is fixed.
    \newline
    \indent
    Since there are at most $2 \cdot \prod \limits_{i = 1}^{g - 1} k_i$ ways to choose which splitting bigons correspond to which slit as well as an orientation with respect to reversal (as per the casework), there are at most $2 \cdot \prod \limits_{i = 1}^{g - 1} k_i$ possibilities for the circle packing $P'$ since there is exactly one tuple $(T_1, T_2, T_3, \ldots, T_g)$ up to orientation with respect to reversal. One of these possibilities is $P' = P$, so there are at most $2 \cdot \prod \limits_{i = 1}^{g - 1} k_i - 1$ other possibilities for $P' \neq P$. Since this bound is finite, the proof of the theorem is complete. 
\end{proof}
\begin{remark}\label{gencount}
    If $g = 2$ is plugged into the upper bound of $2 \cdot \prod \limits_{i = 1}^{g - 1} k_i - 1$ derived in the preceding proof, Theorem \ref{unique} is recovered (in particular, the bound in Remark \ref{count}).
\end{remark}
\section{Research Directions}\label{futdir}
The goals of this research project can be put into two categories: \textit{existence} and \textit{uniqueness}. Existence refers to showing that a circle packing exists for an arbitrary triangulation, under certain conditions. Uniqueness refers to showing that no two circle packings can have the same contacts graph and be fundamentally distinct with respect to the surface. 
\newline
\indent
In this paper, we have made some progress towards showing the uniqueness of certain types of circle packings on translation surfaces. Below are some ideas regarding next steps for continuing to investigate this.
\begin{itemize}
    \item Investigating uniqueness for circle packings on doubled slit tori with $k$ double circles $C_1, C_2, \ldots, C_k$, as in the proof of Proposition \ref{triangprop}, for which each adjacent pair of which is externally tangent, fixed in place.
    \item Investigating uniqueness for circle packings in the $\mathcal{H}(2)$ stratum, where the slit has exactly one endpoint. More generally, investigating uniqueness for circle packings on a surface with a cone point of order greater than $1$.
    \item Investigating uniqueness for circle packings of arbitrary square-tiled surfaces as in \ref{squaredef}. This has the added benefit that many non-squared-tiled translation surface can be thought of as deformations of square-tiled ones as in Remark \ref{square}.
\end{itemize}
One of the entry points to existence is to answer the following question, extending the Koebe-Andreev-Thurston theorem to genus $2$ translation surfaces.
\begin{question}
Given an arbitrary triangulation $T$ of a genus $2$ translation surface $M$, can one always find a circle packing of some $M'$ with contacts graph $T$ such that $M$ and $M'$ lie in the same stratum?
\end{question}
A similar question can be posed regarding square tilings on such surfaces, which are in many respects quite similar to circle packings.
\begin{question}
Given an arbitrary translation surface $M$, can one always find a square tiling of some other translation surface formed by applying an affine transformation to $M$?
\end{question}
This was inspired by a result by Cong (see \cite{cong}), showing that while the translation surface formed by a regular octagon can never be square-tiled, there exists an affine transformation such that the resulting surface can be square-tiled. 
\newline
\indent
Another question regarding the uniqueness of circle packings is whether or not the upper bounds described in \S\ref{main} can always be achieved.
\begin{question}
Does there exist a distinct circle packing of every order $1 \leq i \leq k$, as in the proof of Theorem \ref{unique}? 
\end{question}
This question can also be generalized to genus $g > 2$, as follows.
\begin{question}
Does there exist a distinct circle packing of every tuple of orders $(x_1, x_2, \ldots, x_{g - 1})$ with respect to each of the $g - 1$ slits sequentially where $1 \leq x_i \leq k_i$ for all $1 \leq i \leq g - 1$, as in the proof of Theorem \ref{genunique}? 
\end{question}
\section*{Acknowledgements}
I would like to sincerely thank my MIT PRIMES-USA mentor, Professor Sergiy Merenkov, for his immeasurable guidance and assistance throughout the reading and research periods. I am grateful to Professor W. Patrick Hooper for providing a compilation of useful resources on translation surfaces. I would also like to thank Dr. Tanya Khovanova for helpful feedback on preliminary reports. Additionally, I would like to thank the organizers of the MIT PRIMES-USA program, Dr. Slava Gerovitch and Professor Pavel Etingof, for creating this amazing opportunity to conduct mathematical research. 
\bibliographystyle{plain}
\bibliography{refs}
\nocite{*}
\end{document}